\documentclass[11pt,reqno]{amsart}
\usepackage{amscd,amsfonts,amsmath,amssymb,amstext,amsthm}
\usepackage[all]{xy} 
\usepackage{xcolor}
\usepackage{enumerate}
\usepackage{blkarray}
\usepackage[linktocpage=true]{hyperref}
\usepackage{latexsym}

%%%%%%%%%%%%%%%%%%%%%%%%%%%%%%%%%%%%%%%%%%
\usepackage{graphicx} 
\usepackage{environ}         % provides \BODY
\usepackage{etoolbox}        % provides \ifdimcomp

\newlength{\myl}
\let\origequation=\equation
\let\origendequation=\endequation

\RenewEnviron{equation}{
  \settowidth{\myl}{$\BODY$}                       % calculate width and save as \myl
  \origequation
  \ifdimcomp{\the\linewidth}{>}{\the\myl}
  {\ensuremath{\BODY}}                             % True
  {\resizebox{\linewidth}{!}{\ensuremath{\BODY}}}  % False
  \origendequation
}
%%%%%%%%%%%%%%%%%%%%%%%%%%%%%%%%%%%%%%%%%%%

\makeatletter
%%%%%%%%%%%%%%%%%%%%%%  Seitengroesse  %%%%%%%%%%%%%%%%%%%%%%%
\setlength{\headheight}{10pt}
\setlength{\textheight}{23cm} \setlength{\textwidth}{16cm}
\setlength{\oddsidemargin}{0.3cm}
\setlength{\evensidemargin}{0.3cm}
\setlength{\evensidemargin}{0.5cm}
\setlength{\topmargin}{0cm} \setlength{\unitlength}{1mm}
%%%%%%%%%%%%%%%%%%%%%%%%%%%%%%%%%%%%%%%%%%%%%%%%%%%%%%%%%%%%
\theoremstyle{plain}
\newtheorem{theorem} {Theorem} [section]
\newtheorem{lemma} [theorem]{Lemma}
\newtheorem{proposition}[theorem]{Proposition}
\newtheorem{corollary} [theorem]{Corollary}
\theoremstyle{definition}
\newtheorem{definition} [theorem] {Definition}

\newtheorem{remark} [theorem]  {Remark}

\numberwithin{equation}{section}

\title{Weakly 1-completeness of holomorphic fiber bundles over compact K\"ahler manifolds}
\date\today

\author{Aeryeong Seo}
\address{Department of Mathematics, 
Kyungpook National University, 
Daegu 41566, Republic of Korea}%
\email{aeryeong.seo@knu.ac.kr}
%\thanks{}%
\subjclass[2010]{32F10, 32M15, 53C55, 58E20}%
\keywords{bounded symmetric domain, fiber bundle, Cayley transformation, weakly 1-complete, hyperconvex}%

\begin{document}

\maketitle
%\tableofcontents
\def\Label#1{\label{#1}{\bf (#1)}~}
\markboth{Aeryeong Seo}{Weakly 1-completeness of holomorphic fiber bundles over compact K\"ahler manifolds}

\begin{abstract}
In \cite{Diederich_Ohsawa_1985} Diederich and Ohsawa proved that every disc bundle over 
a compact K\"ahler manifold is weakly 1-complete. In this paper, under certain conditions we generalize this result
to the case of fiber bundles over compact K\"ahler manifolds whose fibers are bounded symmetric domains. 
In particular if the representation related to 
the fiber bundle is reductive, it has a plurisubharmonic exhaustion function. 
If the bundle is obtained by the diagonal action the product of
on bounded symmetric domains, 
we show that it is hyperconvex. 
\end{abstract}

\section{Introduction}
For a complex manifold we say that it is {\it weakly 1-complete} (or pseudoconvex)
if it admits a $C^\infty$ smooth plurisubharmonic (psh, for short) exhaustion function. 
Two extreme examples of this concept are compact complex manifolds and Stein manifolds; every constant function is a $C^\infty$ smooth psh exhaustion function on a compact complex manifold 
and one of the equivalent conditions to be a Stein manifold is the existence of a
$C^\infty$ smooth strictly psh exhaustion function (\cite{Grauert}).
Weakly 1-complete complex manifolds sit somewhere between them.
For the properties of weakly 1-complete manifolds, see \cite{Ohsawa_1981, Ohsawa_1982, Napier_Ramachandran_1997, Ohsawa_2018} and the references therein.
Recently Mongodi-Slodkowski-Tomassini classified weakly 1-complete complex surfaces 
for which such an exhaustion function can be chosen to be real analytic (see \cite{Mongodi_Slodkowski_Tomassini_2018}).

In this paper we investigate the weakly 1-completeness of fiber bundles whose fibers
are bounded symmetric domains of higher rank.
Let $M$ be a compact K\"ahler manifold and $\pi_1(M)$ be its fundamental group. 
Let $\Omega$ be a bounded symmetric domain and $E\rightarrow M$
a holomorphic fiber bundle over $M$ with fiber $\Omega$.
One can express $E$ as $M\times_\rho \Omega$, where 
$\rho$ is a homomorphism from $\pi_1(M)$ to the set of automorphisms
of $\Omega$, denoted by $\text{Aut}(\Omega)$,
and $\pi_1(M)$ acts on the universal cover 
$\widetilde M$ of $M$ as the deck transformation.
We will stick to the notation introduced above through the whole paper.
The main result is the following:

%%%%%%%%%%%%%%%THEOREM1%%%%%%%%%%%%%%%%%%%%%%%%
\begin{theorem}\label{main theorem1}
Let $E =  M \times_\rho \Omega$ be a holomorphic fiber bundle over a compact K\"ahler manifold $M$
with a bounded symmetric domain fiber $\Omega$ where $\rho\colon \pi_1(M)\rightarrow \text{Aut}(\Omega)$ is a homomorphism.
Suppose that $\rho$ is reductive in $\text{Aut}(\Omega)$.
Then $E$ is weakly 1-complete.
\end{theorem}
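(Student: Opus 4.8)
The plan is to lift the problem to the universal cover and build there a $\pi_1(M)$-invariant smooth plurisubharmonic exhaustion. Writing $E=(\widetilde M\times\Omega)/\pi_1(M)$, where $\gamma\in\pi_1(M)$ acts by $\gamma\cdot(x,z)=(\gamma x,\rho(\gamma)z)$, a function on $E$ is the same as a $\pi_1(M)$-invariant function on $\widetilde M\times\Omega$; and because the base $M$ is compact, such a function is an exhaustion of $E$ precisely when it tends to $+\infty$ as the fiber variable approaches $\partial\Omega$, uniformly over a compact fundamental domain of $M$. So it suffices to produce a smooth, $\pi_1(M)$-invariant, plurisubharmonic $\Phi$ on $\widetilde M\times\Omega$ with $\Phi(x,z)\to+\infty$ as $z\to\partial\Omega$.

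The reductivity hypothesis enters exactly through the existence of a good equivariant map. Since $\Omega=G/K$ is a Hermitian symmetric space of noncompact type (hence a nonpositively curved Kähler manifold) and $\rho$ has reductive Zariski closure, the existence theorem of Corlette–Donaldson–Labourie provides a $\rho$-equivariant harmonic map $u\colon\widetilde M\to\Omega$, i.e. $u(\gamma x)=\rho(\gamma)u(x)$; by Sampson's theorem, harmonicity from a compact K\"ahler source into such a target upgrades to \emph{pluriharmonicity}. I would then set
\[
\Phi(x,z)=D\bigl(z,u(x)\bigr),\qquad
D(z,w)=\log\frac{K_\Omega(z,z)\,K_\Omega(w,w)}{\lvert K_\Omega(z,w)\rvert^{2}},
\]
the Calabi diastasis of the Bergman metric. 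The transformation law $K_\Omega(gz,gw)\,J_g(z)\,\overline{J_g(w)}=K_\Omega(z,w)$ makes $D$ invariant under the diagonal $\mathrm{Aut}(\Omega)$-action, so equivariance of $u$ gives $\Phi(\gamma x,\rho(\gamma)z)=\Phi(x,z)$; $D$ is real-analytic and the denominator never vanishes on $\Omega\times\Omega$, so $\Phi$ is $C^\infty$; and $D(z,w)\to+\infty$ as $z\to\partial\Omega$ locally uniformly in $w$, which yields the exhaustion property over the compact base. Note also $i\partial\bar\partial_z D(z,w_0)=\omega_{\mathrm{Berg}}>0$, so $\Phi$ is already strictly plurisubharmonic in the fiber directions.

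The crux is plurisubharmonicity of $\Phi$ in all directions, which I would check along an arbitrary local holomorphic curve $\zeta\mapsto(x(\zeta),z(\zeta))$ in $\widetilde M\times\Omega$. The first slot gives $f_1(\zeta)=z(\zeta)$, holomorphic and hence harmonic into $\Omega$; the second gives $f_2(\zeta)=u(x(\zeta))$, and here pluriharmonicity of $u$ is essential, since it guarantees that the restriction of $u$ to every complex curve is again harmonic. As the product $(\Omega\times\Omega,\ \text{product Bergman metric})$ is again nonpositively curved (NPC), the pair $(f_1,f_2)$ is a harmonic map into an NPC target, and the composition of a geodesically convex function with a harmonic map into an NPC space is subharmonic. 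Thus $\Phi$ is plurisubharmonic provided the diastasis $D$ is convex on $\Omega\times\Omega$.

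The main obstacle is precisely this convexity of the Bergman diastasis. In the rank-one case $\Omega=\mathbb B^n$ it is transparent: $D$ is a convex increasing function of the geodesic distance $d(z,w)$ alone (for $n=1$, $D=4\log\cosh(d/2)$), and a convex increasing function of the convex distance on an NPC product is convex; this is what should yield the unconditional statement for $\mathbb B^n$-bundles. For higher rank $D$ is no longer a function of distance, and I expect the verification to require the explicit Harish-Chandra/Cayley realization of $\Omega$ together with the resulting closed form of $K_\Omega$, reducing the claim to a positivity (sum-of-squares) identity for the complex Hessian of $\Phi$ in which the pluriharmonic equations for $u$ are used to absorb the non-negative but \emph{a priori} uncontrolled holomorphic-Hessian contributions. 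Carrying out that Hessian computation, and extracting from the Cayley picture the structural positivity that makes it succeed for every bounded symmetric domain, is where the real work lies.
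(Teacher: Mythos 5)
Your construction coincides with the paper's up to the decisive step: Corlette's theorem (Theorem \ref{Corlette}) produces the $\rho$-equivariant harmonic map, Siu--Sampson upgrades it to a pluriharmonic one, and your $\Phi(x,z)=D(z,u(x))$ is exactly the paper's candidate $\log\psi_\Omega(h(\xi),w)$; invariance, smoothness and the exhaustion property are handled identically. The gap is in the plurisubharmonicity argument. Your reduction --- restrict to a holomorphic curve, observe that $\zeta\mapsto\bigl(z(\zeta),u(x(\zeta))\bigr)$ is harmonic into the product (a correct use of pluriharmonicity), then invoke the fact that a geodesically convex function precomposed with a harmonic map is subharmonic --- is sound as far as it goes, but it converts the theorem into the unproved statement that the diastasis $D$ is geodesically convex on $\Omega\times\Omega$. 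You verify this only in rank one, where $D$ is a convex increasing function of the distance alone, and you explicitly leave the higher-rank case open (``where the real work lies''). That deferred convexity is the entire mathematical content of the theorem in higher rank: there $\text{Aut}(\Omega)$ is no longer transitive on pairs at fixed distance, so $D$ is a function of the vector-valued (polydisc-coordinate) distance, your rank-one argument does not extend, and no result you cite supplies the convexity. In particular you could not substitute the paper's Lemma \ref{lemma_psh} for it: plurisubharmonicity of $\log\psi_\Omega$ on $\Omega\times\Omega$ is strictly weaker than geodesic convexity, and the paper never proves (nor needs) convexity.

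What the paper does instead, and what your proposal is missing, is the direct computation: it uses diagonal invariance to place the point at $(0,w_0)$ with $w_0$ in a maximal polydisc, uses pluriharmonicity only through the vanishing \eqref{harmonic} of $\partial^2 h^k/\partial\xi_i\partial\overline\xi_j$ (possible because the Christoffel symbols of the Bergman metric vanish at the origin), expands the Hessian of $\log\psi_\Omega(h(\xi),w)$ by the chain rule \eqref{general formula} --- which brings in the $(2,0)$ and $(0,2)$ Hessian terms of $\log\psi_\Omega$ contracted with $\partial h$ and $\overline\partial h$, precisely the terms your convexity hypothesis was meant to control --- and then exhibits, type by type from the explicit Bergman kernels, a sum-of-squares decomposition that absorbs these mixed terms against the quantitative fiber bound \eqref{plurisubharmonicity}. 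Your final paragraph correctly anticipates that this is the shape of the required argument, but anticipating it is not doing it; as written, your proposal proves the theorem only for $\Omega=\mathbb B^n$ (and products of discs and balls), not for a general bounded symmetric domain.
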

%%%%%%%%%%%%%%%%%%%%%%%%%%%%%%%%%%%%%%%%%%
When $\rho$ is non-reductive, under some restrictive conditions, the fiber bundle is weakly 1-complete (see Theorem \ref{main theorem2} and Theorem \label{theorem_ball}). 
In particular  any $\mathbb B^2$ fiber bundle over a compact K\"ahler manifold is weakly 1-complete
where $\mathbb B^2 = \{ z\in \mathbb C^2 : |z|<1\}$ is the two dimensional unit ball.

One of the most celebrated theorem in several complex variables is a theorem of Oka-Grauert
which says that every locally pseudoconvex domain in a Stein manifold is Stein. 
In this theorem the Steinness of the ambient space is crucial since in 1960's Grauert found an 
example which is locally pseudoconvex but not Stein and later Diederich-Forn\ae ss  constructed a locally trivial holomorphic disc bundle over a Hopf manifold which is locally 
pseudoconvex inside a $\mathbb P^1$-bundle but it does not admit a psh exhaustion function (\cite{Diederich_Fornaess}).
More studies along this line were done in \cite{Ohsawa_2015, Deng_Fornaess}.
Interestingly, Diederich-Ohsawa proved that the non-K\"ahlerness of the base manifold 
is necessary. More precisely they showed the following result.
\begin{theorem}[Diederich-Ohsawa  \cite{Diederich_Ohsawa_1985}]
Every holomorphic disc bundle over a 
compact K\"ahler manifold is weakly 1-complete.
\end{theorem}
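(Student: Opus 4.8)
The plan is to realize the disc bundle as a quotient and to build a $\pi_1(M)$-invariant plurisubharmonic exhaustion on the universal cover. Writing the fiber as the unit disc $\Delta=\{z\in\mathbb{C}:|z|<1\}$, the bundle becomes $E=\widetilde M\times_\rho\Delta=(\widetilde M\times\Delta)/\pi_1(M)$ with $\rho\colon\pi_1(M)\to\text{Aut}(\Delta)\cong\mathrm{PSL}(2,\mathbb{R})$, so it suffices to produce a $C^\infty$ psh function $\Phi$ on $\widetilde M\times\Delta$ that is invariant under $\gamma\cdot(w,z)=(\gamma w,\rho(\gamma)z)$ and exhausts $E$. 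The natural fiberwise building block is the Poincar\'e potential $-\log(1-|z|^2)$, which is strictly psh and exhausts each fiber; its only defect is invariance, since for $\varphi\in\text{Aut}(\Delta)$ one has $-\log(1-|\varphi(z)|^2)=-\log(1-|z|^2)-\log|\varphi'(z)|$, i.e. it is twisted by the factor of automorphy $\log|\rho(\gamma)'(z)|$. First I would split according to the image: either $\overline{\rho(\pi_1(M))}^{Zar}$ is reductive, or it stabilizes a boundary point $\xi\in\partial\Delta$ (the non-reductive, parabolic case).

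In the reductive case I would remove the twist by recentering the potential along an equivariant harmonic section. Since $M$ is compact K\"ahler and $\Delta$ is complete, simply connected and nonpositively curved, Corlette's existence theorem for equivariant harmonic maps furnishes a $\rho$-equivariant $f\colon\widetilde M\to\Delta$ with $f(\gamma w)=\rho(\gamma)f(w)$. Writing $\varphi_a\in\text{Aut}(\Delta)$ for the automorphism $z\mapsto\frac{z-a}{1-\overline a z}$ sending $a$ to $0$, I would set
\[
\Phi(w,z)=-\log\!\left(1-|\varphi_{f(w)}(z)|^2\right)
=-\log(1-|z|^2)-\log(1-|f(w)|^2)+\log|1-\overline{f(w)}z|^2 .
\]
Equivariance gives $\varphi_{f(\gamma w)}\circ\rho(\gamma)=R\circ\varphi_{f(w)}$ for a rotation $R$ fixing $0$, and since rotations preserve $|\cdot|$ the function $\Phi$ is $\pi_1(M)$-invariant; as $f(w)$ stays in $\Delta$ and $\varphi_{f(w)}$ maps $\partial\Delta$ to $\partial\Delta$, one has $\Phi\to+\infty$ as $z\to\partial\Delta$ and $\Phi\geq0$, so $\Phi$ descends to a proper exhaustion of $E$.

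In the non-reductive case all $\rho(\gamma)$ fix a common boundary point $\xi$; passing to the upper half-plane model with $\xi=\infty$, the image lies in the affine group $z\mapsto az+b$, $a>0$, and $-\log\operatorname{Im}z$ transforms by the additive cocycle $-\log a(\gamma)$. Here $\gamma\mapsto\log a(\gamma)$ is a homomorphism $\pi_1(M)\to\mathbb{R}$, i.e. a class in $H^1(M,\mathbb{R})$, which by Hodge theory on the compact K\"ahler manifold $M$ is the real part of a holomorphic $1$-form; integrating the latter on $\widetilde M$ yields a pluriharmonic $\psi$ with $\psi(\gamma w)-\psi(w)=\log a(\gamma)$. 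Then $\Phi(w,z)=-\log\operatorname{Im}z+\psi(w)$ is invariant and psh (the Poincar\'e term is psh and $\psi$ is pluriharmonic), but it is not yet an exhaustion: $-\log\operatorname{Im}z$ blows up along $\mathbb{R}$ while tending to $-\infty$ at $\xi$, so it must be supplemented to survive at the fixed point, and arranging this is exactly where the maximal $1$-parabolic hypothesis of Theorem \ref{main theorem2} enters.

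The hard part will be the plurisubharmonicity, which is precisely where compactness and the K\"ahler condition are indispensable — recall that over a Hopf manifold base the analogous bundle admits no psh exhaustion (\cite{Diederich_Fornaess}). Concretely, I must compute the full complex Hessian of $\Phi$ in the joint variables $(w,z)$; because the recentering map $w\mapsto f(w)$ is only harmonic and not holomorphic, the Hessian a priori contains mixed terms in $\partial f$ and $\bar\partial f$ of indefinite sign, and the vanishing of the tension field (harmonicity) controls only their trace. The decisive input is that a harmonic map from a compact K\"ahler manifold into a target of nonpositive Hermitian curvature — here the disc — is automatically \emph{pluriharmonic} (Sampson, Siu): the entire $(1,1)$-part of the second fundamental form of $f$ vanishes, not merely its trace. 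Feeding this, together with the K\"ahler identities, into the Hessian, I expect the indefinite terms to cancel and the remainder to reorganize into the manifestly nonnegative Poincar\'e contribution, yielding $i\partial\bar\partial\Phi\geq0$. Making this cancellation precise in the reductive case, and supplementing the horocyclic potential in the parabolic case, are the two technical cores of the argument; once both are in place, Theorem \ref{main theorem1} and Theorem \ref{main theorem2} specialized to $\Omega=\mathbb{B}^1$ recover the statement.
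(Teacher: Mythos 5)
Your reductive half is essentially the paper's own argument: Corlette's theorem (Theorem \ref{Corlette}) supplies the $\rho$-equivariant harmonic map, your recentered Poincar\'e potential $-\log\bigl(1-|\varphi_{f(w)}(z)|^2\bigr)$ is (up to the factor $2$) exactly the invariant function $\log\psi_\Delta(f(w),z)$ of Section \ref{psh}, and the decisive input you single out --- Siu--Sampson pluriharmonicity rather than mere harmonicity --- is precisely what the paper flags as critically used. Only the Hessian computation itself is left as an expectation (the paper carries it out in the proof of Theorem \ref{main theorem1}; for the disc it is the Type I case with $p=q=1$), which is acceptable in an outline. (A historical aside, not a gap: Diederich--Ohsawa predate Corlette and get the dichotomy from the Eells--Sampson/Hamilton heat flow, which is also how the paper argues in Proposition \ref{contained in maximal parabolic}.)

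The genuine gap is the non-reductive case, and you have located it yourself without filling it. Your function $\Phi(w,z)=-\log\operatorname{Im}z+\psi(w)$ is indeed invariant and psh, but its sublevel sets $\{\Phi<c\}$ contain the entire horoballs $\{\operatorname{Im}z>e^{\psi(w)-c}\}$, so it is not an exhaustion; deferring the repair to Theorem \ref{main theorem2} is not a proof, because that theorem \emph{is} the parabolic half of the Diederich--Ohsawa statement --- the very thing to be established. What is missing is the construction of an invariant psh term that blows up as $z\to\xi$. In Diederich--Ohsawa (and in the paper) this comes from passing to the $\mathbb P^1$-bundle and the flat section $s$ at $\xi$: since $s(M)$ lies in the real hypersurface $M\times_\rho\partial\Delta$, the normal bundle of $s(M)$ is topologically trivial (Lemma \ref{trivial}), hence carries a flat Hermitian metric (Corollary \ref{unitary}), which reduces the structure group of the complementary $\mathbb C$-bundle to $\{z\mapsto az+b:|a|=1\}$. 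Note that your Hodge-theoretic $\psi$ \emph{is} this flat metric; it normalizes only the linear (dilation) part $a(\gamma)$ of the affine holonomy $z\mapsto a(\gamma)z+b(\gamma)$. The remaining work, which your outline omits entirely, is to exploit the unitary reduction to build the growing term (the $\|E_1\|^2$ term in the proof of Theorem \ref{main theorem2}); here the translation cocycle $b(\gamma)$ is the real obstruction --- it defines a nonzero class in $H^1$ with coefficients in the unitary local system (it cannot be a coboundary, since a coboundary would give a second fixed point and put you back in the reductive case), so naive choices such as the flat norm $\|z\|^2$ are not invariant and one must first recenter by a pluriharmonic equivariant map $v(w)$ solving the twisted cocycle equation before a term like $\|z-v(w)\|^2$ becomes both invariant and psh. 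Since this parabolic construction is the genuinely hard half of the theorem, the proposal as it stands proves only half of the statement.
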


The proof of this result consists of two cases corresponding to whether there exist harmonic sections.
At first they consider the function $\phi(z,w) := 1- \left| \frac{w-z}{\overline w z-1} \right|^2$ on the bidisc $\Delta^2$ which is invariant with respect to the diagonal action 
$(z,w)\mapsto (\gamma z, \gamma w)$ for all $\gamma \in \text{Aut}(\Delta)$.
If there exists a harmonic section $h$, then the function $\varphi(z,w) := -\log\phi(h(z),w)$ defines 
a psh exhaustion function.
If there is no harmonic section, they look at the $\mathbb P^1$-bundle 
$ M\times_\rho \mathbb P^1$ which contains the disc bundle $ M\times_\rho \Delta$ as an open subset.
By Eells-Sampson (\cite{Eells_Sampson_1964}) and Hamilton (\cite{Hamilton}),
there exists a flat section $s$ to the ambient bundle
$ M\times_\rho \mathbb P^1$.
One can deduce that the complement of $s(M)$ 
in $ M\times_\rho \mathbb P^1$
has the structure of locally trivial holomorphic $\mathbb C$-bundle with respect to the group 
$\left\{z\mapsto az+b : |a|=1, a,b\in \mathbb C\right\}$.
Here they used the fact that $s(M)$ is contained in a real hypersurface in $M \times_\rho \mathbb P^1$ and as a consequence 
the normal bundle of $s(M)$ in $ M\times_\rho \mathbb P^1$ is a topologically trivial line bundle.
Knowing this last fact, they could construct a psh exhaustion function.

The study of bounded symmetric domain fiber bundles reduces to two cases according to
whether $\rho$ is reductive. 
Note that for a real reductive Lie group $G$ with its Lie algebra $\frak g$, a representation $\rho\colon \pi_1(M)\rightarrow G$ is said to be reductive if $Ad\circ \rho\colon \pi_1(M)\rightarrow \text{Aut}(\frak g)$ is completely reducible.
If $G$ is algebraic, this definition is equivalent to that the Zariski closure of $\rho(\pi_1(M))$ in $G$ is reductive.

First we consider a real-valued function on $\Omega \times \Omega$ defined by 
\begin{equation}\nonumber
\psi_\Omega (z,w) := \frac{K_\Omega(z,z)K_\Omega(w,w)}{|K_\Omega(z,w)|^2},
\end{equation}
which is invariant under the diagonal action of $\text{Aut}(M)$.
Here $K_\Omega$ denotes the Bergman kernel of $\Omega$.
When $\Omega$ is the unit disc, it coincides with $1 / \phi$. In Section \ref{psh}, we show that when $\Omega$ is a bounded symmetric domain, 
$\log \psi_\Omega$ is psh on $\Omega\times \Omega$ by a straight forward calculation which uses an explicit formula of the Bergman
kernel of $\Omega$.
We then apply the following theorem of Corlette:
\begin{theorem}[Corlette \cite{Corlette_1988} cf. \cite{Toledo_1999} Theorem 4.7]\label{Corlette}
Let $M$ be a compact Riemannian manifold, let $G$ be a semisimple algebraic group,
and let $\rho\colon \pi_1(M)\rightarrow G$ be a representation.
Then a $\rho$-equivariant harmonic map $f\colon \widetilde M\rightarrow G/K$ 
exists if and only if $\rho$ is reductive.
\end{theorem}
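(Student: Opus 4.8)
The plan is to read this as Corlette's existence theorem for equivariant harmonic maps into the symmetric space $X=G/K$, which carries a $G$-invariant metric of nonpositive curvature, and to prove the two implications by rather different means. Throughout, call a map $f\colon\widetilde M\to X$ \emph{$\rho$-equivariant} if $f(\gamma\cdot x)=\rho(\gamma)\cdot f(x)$ for every $\gamma\in\pi_1(M)$. Since the metric on $\widetilde M$ and the $G$-invariant metric on $X$ are both $\pi_1(M)$-invariant, the energy density of such an $f$ descends to $M$, so
\[
E(f)=\tfrac12\int_M |df|^2\,dV
\]
is well defined, and harmonicity of $f$ is exactly the condition that it be a critical point of $E$. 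Equivalently, I would phrase the whole problem as a variational problem for sections of the flat bundle $\widetilde M\times_\rho X\to M$, using throughout the structure theory of $G$: every parabolic subgroup $P$ is the stabilizer of a boundary point $\xi\in\partial_\infty X$, and (in characteristic zero) a subgroup is reductive precisely when, whenever it lies in a parabolic $P$, it already lies in some Levi factor of $P$.

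For the implication \emph{a harmonic map exists $\Rightarrow$ the Zariski closure is reductive} I would argue by contraposition. If $H:=\overline{\rho(\pi_1(M))}^{Zar}$ is non-reductive, then $H$ lies in a proper parabolic $P=P_\xi$ but in no Levi factor of it, so its unipotent part records a genuine degeneration toward $\xi$. Given a harmonic $f$, I would compose it with the Busemann function $b_\xi$: convexity of $b_\xi$ together with harmonicity of $f$ makes $u:=b_\xi\circ f$ subharmonic on $\widetilde M$, while the shift $b_\xi(\rho(\gamma)y)=b_\xi(y)+c(\gamma)$ by the Busemann cocycle $c\colon\pi_1(M)\to\mathbb R$ (a homomorphism, since $\rho(\pi_1(M))$ fixes $\xi$) lets $\Delta u$ descend to the compact $M$. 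Integrating forces $\Delta u\equiv 0$, so $b_\xi\circ f$ is harmonic; the equality case in the convexity estimate is then rigid and pins $f$ into a single horosphere, making the unipotent part of $\rho$ act trivially on the image and contradicting that $H$ was not contained in a Levi. Hence the existence of a harmonic map compels reductivity.

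For the converse \emph{reductive $\Rightarrow$ a harmonic map exists}, which is the substantial half, I would run the Eells--Sampson nonlinear heat flow $\partial_t f_t=\tau(f_t)$ (with $\tau$ the tension field) from an arbitrary smooth equivariant initial map $f_0$. Nonpositivity of the curvature of $X$ yields long-time existence, monotone decay of $E(f_t)$, and the Bochner a priori estimates needed to extract a convergent subsequence; the only failure mode is that $f_t$ runs off to infinity in the noncompact target. I would capture such an escape by a limiting direction in $\partial_\infty X$, so that a net escape produces a point at infinity fixed by all of $\rho(\pi_1(M))$, i.e. a reduction of the image to a proper parabolic along a unipotent direction. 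Reductivity of $H$ is exactly the hypothesis that rules this out: a reductive subgroup fixing $\xi$ must fix an opposite point as well and hence sit inside a Levi, so no genuine escape occurs and the flow subconverges to a harmonic map.

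I expect the crux to be this no-escape-to-infinity step in the converse. The difficulty is to make rigorous the dictionary between asymptotic degeneration of the heat flow (or of a minimizing sequence) and parabolic reductions of $\rho$: one needs the asymptotic geometry of $X$—Busemann functions, horoballs, and the Tits boundary—together with the structure of parabolic subgroups and a quantitative coercivity estimate showing that, under reductivity, the energy is proper modulo the relevant isometries. By contrast the first implication is comparatively soft, resting only on convexity of Busemann functions and the compactness of $M$. In the application at hand, identifying $X=\mathrm{Aut}(\Omega)/K$ with $\Omega$ turns the harmonic map into a harmonic section $h$ of $E$, from which the plurisubharmonic exhaustion $\log\psi_\Omega(h(\,\cdot\,),\,\cdot\,)$ is assembled, generalizing the disc-bundle prototype $-\log\phi(h(z),w)$.
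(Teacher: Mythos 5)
The first thing to note is that the paper does not prove this statement at all: Theorem \ref{Corlette} is imported verbatim from Corlette \cite{Corlette_1988} (cf.\ \cite{Toledo_1999}, Theorem 4.7) and is used as a black box in the proofs of Theorems \ref{main theorem1} and \ref{main theorem2}. So there is no argument in the paper to compare yours against; what can be judged is whether your outline would stand on its own, and how it relates to the cited sources. Your plan does follow the standard route of those sources (Corlette, and the Donaldson--Labourie--Jost--Yau circle of ideas as exposited by Toledo): Busemann functions and convexity for ``harmonic map $\Rightarrow$ reductive'', and the Eells--Sampson flow plus a no-escape-to-infinity argument for the converse.

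As written, however, it has two genuine gaps. First, in the necessity direction your equality-case rigidity is misstated. From convexity, compactness of $M$ and the cocycle property you correctly conclude $\Delta(b_\xi\circ f)\equiv 0$; since $\tau(f)=0$ this says $\mathrm{Hess}\,b_\xi(df\cdot,df\cdot)=0$, i.e.\ $df$ takes values in the kernel of $\mathrm{Hess}\,b_\xi$. That kernel is tangent to the \emph{parallel set} of geodesics asymptotic to $\xi$ (an orbit of a Levi subgroup of $P_\xi$), not to a horosphere; the image of $f$ need not lie in a horosphere at all. For instance, if $\rho$ sends a generator of $\pi_1(M)$ to a transvection toward $\xi$, the equivariant harmonic map is a geodesic and $d(b_\xi\circ f)\neq 0$ --- consistent, since such a $\rho$ has reductive (toral) Zariski closure. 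The contradiction with ``$H$ lies in no Levi of $P_\xi$'' must therefore be extracted from the parallel-set statement, namely that the action of $\rho(\pi_1(M))$ on the parallel set containing the image factors through a Levi factor; this identification of $\ker\mathrm{Hess}\,b_\xi$ and the ensuing group-theoretic step are exactly where the work lies, and they are absent. Second, and more seriously, the crux of the converse --- that reductivity of the Zariski closure forces the heat flow (or a minimizing sequence) to remain in a compact region modulo the relevant symmetries --- is only named, not proven, as you yourself acknowledge; that coercivity estimate is Corlette's key lemma and occupies the bulk of his paper. So your text is a correct road map to the known proof, but not a proof.
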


When $\rho$ is reductive, we can show that the function $\log \psi_\Omega(h(z),w)$ is a psh exhaustion function where $h$ is the harmonic section obtained in Theorem \ref{Corlette}.
The fact that $h$ is pluriharmonic, which was proved by Siu \cite{Siu_1980} and Sampson \cite{Sampson_1986} (see also \cite{Toledo_1999}), is critically used in the proof.

If $\rho$ is non-reductive, there exists no harmonic section, which implies that there exists a harmonic section from $M$ to $M\times_\rho B$ where $B$ is a boundary component of $\Omega$ in its compact dual $\widehat \Omega$ by Hamilton (\cite{Hamilton}).
Now exploiting a generalized Cayley transformation of Wolf-Kor\'anyi \cite{Koranyi_Wolf_1965}, we could obtain some results in some special situations (Theorem \ref{main theorem2}, Theorem \ref{theorem_ball}).

Let us recall that a complex manifold is hyperconvex when it admits a bounded psh exhaustion function.
Here we say that $\mu \colon X\rightarrow (-\infty, 0]$ on a manifold $X$ is a bounded exhaustion function
if $\{p\in X : \mu(p) < c\}$ is relatively compact in $X$ for all negative $c\in \mathbb R$.
It is important to stress that any hyperconvex complex manifold is weakly 1-complete but the converse is not true.
For the counterexample, see \cite{Ohsawa_Sibony}.
Let $\Gamma\subset  \text{Aut}(\Omega)$ be a cocompact discrete subgroup of  $\text{Aut}(\Omega)$.
Then $\Gamma\setminus\Omega$ is a compact K\"ahler manifold with the metric 
induced from the Bergman metric on $\Omega$.  Consider the diagonal action of $\Gamma$ 
on $\Omega\times\Omega$, i.e. $\gamma(z,w) = (\gamma z, \gamma w)$.
Then the quotient complex manifold of $\Omega\times \Omega$ by the diagonal action, 
$\Omega \times \Omega /\Gamma$, is a $\Omega$-fiber bundle 
over $\Gamma\setminus\Omega$.
In this special case we can show that $\Omega \times \Omega /\Gamma$ is hyperconvex (Theorem \ref{hyperconvex}).
If $\Omega$ is the ball $\mathbb B^n$, one sees that 
$\mathbb B^n\times \mathbb B^n/\Gamma$ is a domain in 
$\mathbb C\mathbb P^n\times \mathbb B^n/\Gamma$ 
with real analytic
boundary whose Diederich-Fornaess index equals to $1 /2$ (Corollary \ref{1/2}).
In case of $n=1$, it was proved by Adachi-Brinkschulte \cite{Adachi_Brinkschulte_2015}
and Fu-Shaw \cite{Fu_Shaw_2016}.
We exploit their theorems to obtain the Diederich-Fornaess index.

\medskip

{\bf Acknowledgement} 
The author would like to thank Masanori Adachi and Takeo Ohsawa for
their helpful comments on preliminary drafts. 
The author also expresses his gratitude to the anonymous referee for many valuable comments.
This work began at the workshop ``Progress in Several Complex Variables" 
which was held in Korea Institute of Advanced Study (KIAS), April 23--27, 2018.
The author is grateful to the organizers
and the Institute for providing a stimulating working environment.
The author was partially supported by Basic Science Research Program through the National Research Foundation
of Korea (NRF) funded by the Ministry of Education (NRF-2019R1F1A1060175).

\section{Preliminaries}
Let $X$ be an irreducible Hermitian symmetric space of non-compact type.
Let $G$ be the identity component of the isometry group of $X$
with respect to the Bergman metric of $X$
and $K\subset G$ the isotropy subgroup at $o\in X$.
Then $X$ is biholomorphic to $G/K$.
Denote by $\mathfrak g$ and by $\mathfrak k$ the Lie algebras of $G$ and $K$ respectively.
Let $\mathfrak g = \mathfrak k + \mathfrak m$ be the Cartan decomposition.
Let $\mathfrak g^\mathbb C = \mathfrak g\otimes_{\mathbb R} \mathbb C$, 
$\mathfrak k^\mathbb C = \mathfrak k \otimes_{\mathbb R} \mathbb C$,
$\mathfrak m^\mathbb C = \mathfrak m\otimes_{\mathbb R} \mathbb C$, 
and $G^\mathbb C$ be the complex Lie group corresponding to $\frak g^\mathbb C$.
Let $\mathfrak g_c = \mathfrak k + \sqrt{-1}  \mathfrak m$
be a Lie algebra of compact type and 
$G_c$ the corresponding connected Lie group of $\mathfrak g_c$.
Then $\widehat X=G_c/K$ is the compact dual of $X$. 
Let $\mathfrak h$ be a Cartan subalgebra of $\mathfrak g$ contained in $\mathfrak k$.
Note that $\mathfrak h^\mathbb C = \mathfrak h\otimes_{\mathbb R} {\mathbb C}$ is a 
Cartan subalgebra of $\mathfrak g^\mathbb C$.
Let $\Psi$ denote the set of roots of $\mathfrak g^\mathbb C$ with respect to $\mathfrak h^\mathbb C$
and let $\mathfrak g^\alpha$ denote the root space with respect to a root $\alpha\in \Psi$.
Let $\Psi_{\mathfrak k}$, $\Psi_{\mathfrak m}$ denote 
the set of compact, non-compact roots of $\mathfrak g^\mathbb C$
with respect to the Cartan decomposition $\mathfrak g^\mathbb C = \mathfrak k^\mathbb C+ \mathfrak m^\mathbb C$
respectively
and choose an order of $\Psi$ such that the set of positive non-compact roots 
$\Psi_{\mathfrak m}^+$ satisfies that 
$\frak m^+:=\sum_{\alpha\in \Psi_{\mathfrak m}^+} \mathfrak g^\alpha = T^{1,0}_o X$.
Here $T^{1,0}X$ denotes the holomorphic tangent bundle of $X$.
Denote $\frak m^-:=\sum_{\alpha\in \Psi_{\mathfrak m}^-} \mathfrak g^\alpha$.
Let $M^+$ and $M^-$ be the corresponding analytic subgroups in $G^\mathbb C$.
Note that $\frak m^+$ and $\frak m^-$ are abelian subalgebras of $\frak g^\mathbb C$.
The center $\frak z$ of $\frak k$ contains an element $Z$ such that $\text{Ad}_Z E = \pm iE$ for 
$ E\in \frak m^{\mp}$. $J:= \text{Ad}_Z$ is a complex structure on $\frak m$.
A basis of $\frak m$ is given by the elements 
$
X_\alpha = E_\alpha+E_{-\alpha}$ and
$
Y_\alpha = -i(E_\alpha - E_{-\alpha})
$
where $\alpha$ is non-compact positive. For such $\alpha$, we have the relations
$
JX_\alpha = Y_\alpha$, $JY_\alpha = -X_\alpha$ and $ [X_\alpha, Y_\alpha] = 2iH_\alpha$.
Define $X_\alpha^c = iX_\alpha$ and $Y_\alpha^c = iY_\alpha$.
Those define a basis of $i\frak m$.
$K^\mathbb C$ denoting the analytic subgroup corresponding to $\frak k^\mathbb C$, 
$K^\mathbb C\cdot M^+$ is a semidirect product.
$\widehat X=G_c/K$ is identified with $G^\mathbb C/K^\mathbb C\cdot M^+$ by the identity map of $G$ into $G^\mathbb C$.
For $o= eK \in \widehat X$, the orbit $G \cdot o$ is the image of the holomorphic embedding $gK\mapsto g(o)$ of $X$ into $\widehat X$ 
(Borel embedding). The map $\xi \colon \frak m^- \rightarrow \widehat X$ defined by 
\begin{equation}\nonumber
\xi(E) = \text{exp}(E)(o)
\end{equation}
is a holomorphic homeomorphism onto a dense open subset and $\xi$ is $\text{Ad}_{K}$-equivariant.
Then $
\Omega = \xi^{-1}(G(o))
$
is a bounded symmetric domain in $\frak m^-$; this is the Harish-Chandra realization of $X$.

For $\alpha, \,\beta\in \Psi$, one says that $\alpha$ and $\beta$ are strongly orthogonal
if and only if $\alpha \pm \beta \notin \Psi$.
Let $\Pi:= \{\alpha_1, \ldots, \alpha_r\}$ denote a maximal set of strongly orthogonal 
positive non-compact roots of $\frak g^\mathbb C$.
Then $X$ is of rank $r$.
For every $\alpha\in \Pi$ we have 
\begin{equation}\nonumber
c_\alpha := \text{exp}\left(\frac{\pi}{4}X_\alpha \right)\in G_c, \quad c = \prod_{\alpha\in \Pi} c_\alpha
\end{equation}
and $c$ is called the Cayley transformation of $X$. Note that $c$ has order 4 or 8. 
For $\Lambda\subset \Pi$ partial Cayley transformation is defined by
\begin{equation}\label{partial cayley}
c_\Lambda = \prod_{\alpha\in \Lambda} c_\alpha.
\end{equation}

Denote by $\mathfrak g_\Lambda^\mathbb C$ the derived algebra of 
$\mathfrak h + \sum_{\alpha \perp \Pi\setminus \Lambda}\mathfrak g^\alpha$,
where $\perp$ is the orthogonality with respect to the inner product 
induced by the Killing form of $\mathfrak g^\mathbb C$.
Let $G_\Lambda^\mathbb C$ denote the Lie subgroup of $G^\mathbb C$ corresponding to $\mathfrak g_\Lambda^\mathbb C$
and $G_{\Lambda}$, $G_{\Lambda}^c$, $\frak g_{\Lambda}$, $\frak g_\Lambda^c$ 
denote $G\cap G_\Lambda^\mathbb C$, $G^c\cap G_\Lambda^\mathbb C$,
$\frak g\cap \frak g_\Lambda^\mathbb C$, $\frak g^c\cap \frak g_\Lambda^\mathbb C$.
Let $X^c_\Lambda = G^c_\Lambda \cdot o$ and 
$X_{\Lambda,0} = G_{\Lambda}\cdot o \subset X$.
%If $\Lambda = \Pi - \{\alpha\}$ for $\alpha\in \Pi$, 
%then $X_\Lambda^c$ and $X_{\Lambda}$ are called {\it maximal characteristic subspaces} 
%of $X_c$ and $X$ respectively. 

Let $\partial X$ be the topological boundary of $X$ in $\widehat X$ and $\Delta=\{z\in \mathbb C : |z|<1\}$ the unit disc. A holomorphic map $g\colon \Delta \rightarrow \widehat X$ such that $g(\Delta)\subset \partial X $ is called a holomorphic arc in $\partial X$. A finite sequence $\{g_1, \ldots, g_s\}$ of holomorphic arcs in $\partial X$ is called a chain of holomorphic arcs in $\partial X$ if $f_j(\Delta)\cap f_{j+1}(\Delta) \neq \emptyset$ for any $j = 1,\ldots, s-1$. One can give an equivalence class on $\partial X$ such that for $z_1,\,z_2\in \partial X$, $z_1\sim z_2$ if and only if there is a chain of holomorphic arcs $\{g_1,\ldots, g_s\}$ in $\partial X$ with $z_1\in g_1(\Delta)$ and $z_2\in g_s(\Delta)$. The equivalence classes are the boundary components of $\partial X$ in $\widehat X$.

\begin{theorem}[Wolf \cite{Wolf_1972}]
The $G$ orbits on the topological boundary of $X$ in its compact dual
are the sets 
\begin{equation}\nonumber
G(c_{\Pi-\Lambda}o) = \bigcup_{k\in K}kc_{\Pi-\Lambda}X_{\Lambda,0}
\quad \text{ with }\quad \Lambda \subsetneqq \Pi,
\end{equation}
where $c_{\Pi-\Lambda}$ is the Cayley transformation with respect to $\Pi-\Lambda$.
Furthermore the boundary components of $X$ in $\widehat X$ are the sets 
$kc_{\Pi-\Lambda}X_{\Lambda, 0}$ with $k\in K$ and $\Lambda \subsetneqq \Pi$.
These are Hermitian symmetric spaces of non-compact type and rank is $|\Lambda|$.
\end{theorem}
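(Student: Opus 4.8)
The plan is to reduce the whole classification to the maximal polydisc and then transport it to the boundary by the partial Cayley transforms. Let $\mathfrak a = \sum_{\alpha\in\Pi}\mathbb R X_\alpha \subset \mathfrak m$ be the maximal abelian subspace spanned by the strongly orthogonal frame, and let $D = \exp(\mathfrak a)\cdot o$ be the associated totally geodesic polydisc in $X$. The first ingredient I would use is the polydisc theorem: $\overline\Omega = \bigcup_{k\in K} k\,\overline D$ in $\widehat X$. Consequently every point of $\partial X = \overline\Omega\setminus\Omega$, and in particular every $G$-orbit, meets the closed polydisc $\overline D$; and since the maximal torus $T\subset K$ rotates each disc factor independently, I may normalize each coordinate of modulus one to a single boundary point of its factor. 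This reduces the problem to the combinatorics of which of the $r$ disc coordinates sit on the unit circle.

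Next I would stratify $\partial D$ by that subset. If the coordinates indexed by $\Pi-\Lambda$ are pushed to the circle while those indexed by $\Lambda$ stay interior, then the corresponding base point is exactly $c_{\Pi-\Lambda}\cdot o$: each single factor $c_\alpha = \exp(\tfrac{\pi}{4}X_\alpha)$ is the Cayley element of the $\alpha$-disc and moves its center $o$ to a boundary point, while strong orthogonality guarantees the $c_\alpha$ commute and act independently across factors, so $c_{\Pi-\Lambda}$ is well defined and moves precisely the $(\Pi-\Lambda)$-coordinates to the boundary. The remaining interior directions form the sub-polydisc $D_\Lambda = \exp(\mathfrak a_\Lambda)\cdot o$ with $\mathfrak a_\Lambda = \sum_{\alpha\in\Lambda}\mathbb R X_\alpha$, and the stratum is $c_{\Pi-\Lambda}D_\Lambda$. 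By construction $\mathfrak g_\Lambda^\mathbb C$ is the derived algebra assembled from the roots orthogonal to $\Pi-\Lambda$, for which $\Lambda$ is a maximal strongly orthogonal set; hence $G_\Lambda$ acts on $D_\Lambda$ with isotropy $K_\Lambda$, so that $X_{\Lambda,0} = (G_\Lambda\cdot o)_0$ is a Hermitian symmetric space of noncompact type of rank $|\Lambda|$ whose Harish-Chandra realization is $D_\Lambda$ up to $K_\Lambda$. Saturating the translate by $K$ gives $\bigcup_{k\in K} k c_{\Pi-\Lambda}X_{\Lambda,0}$. To see this is the single orbit $G(c_{\Pi-\Lambda}o)$ I would compute the stabilizer of the base point, namely the real points of the maximal parabolic attached to $\Pi-\Lambda$, and check transitivity along the face; disjointness of the orbits for different $\Lambda$ then follows because $|\Lambda|$, the rank of the face, is a $G$-invariant.

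Finally, to match $G$-orbits with boundary components I would show that each slice $k c_{\Pi-\Lambda}X_{\Lambda,0}$, being biholomorphic to the bounded domain $X_{\Lambda,0}$, is connected by holomorphic arcs, and conversely that any holomorphic arc in $\partial X$ meeting such a slice stays inside it, so the slices are exactly the maximal complex-analytic subsets, i.e. the boundary components, of $\partial X$. The technical heart, and the step I expect to be the main obstacle, is precisely this last claim together with exhaustiveness and disjointness: one must control the directions transverse to a slice and rule out holomorphic arcs linking distinct faces, which requires the fine restricted-root structure of $\mathfrak g^\mathbb C$ under the partial Cayley transform (equivalently, in Jordan-triple terms, the Peirce decomposition attached to the tripotent $\sum_{\alpha\in\Pi-\Lambda}e_\alpha$ and the transitivity of $K$ on tripotents of rank $|\Pi-\Lambda|$). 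Granting this structural input, the orbit description, the boundary-component description, and the rank and type statement all follow, as in Kor\'anyi--Wolf \cite{Koranyi_Wolf_1965}.
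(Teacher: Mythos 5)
The first thing to note is that the paper contains no proof of this statement at all: it is quoted as known background, attributed to Wolf \cite{Wolf_1972}, and used later only through its consequences (boundary components, their normalizers, and the Kor\'anyi--Wolf structure theory). So there is no internal proof to compare your attempt against; it can only be measured against the literature it ultimately points to. At the level of strategy your outline does follow the classical route (polydisc theorem, normalization by the torus in $K$, transport of the strata of $\partial D$ by the partial Cayley transforms $c_{\Pi-\Lambda}$), and those reductions are correct as far as they go: every boundary point is $K$-equivalent to a point of $\overline D$, and the stratum of $\overline D$ with the $(\Pi-\Lambda)$-coordinates on the circle is indeed $c_{\Pi-\Lambda}D_\Lambda$ up to the torus action.

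The difficulty is that the parts you do not carry out are exactly the content of the theorem. First, to conclude that $\bigcup_{k\in K}kc_{\Pi-\Lambda}X_{\Lambda,0}$ is a single $G$-orbit you need $c_{\Pi-\Lambda}X_{\Lambda,0}\subset G(c_{\Pi-\Lambda}o)$, which rests on the fact that $\mathrm{Ad}_{c_{\Pi-\Lambda}}$ centralizes $G_\Lambda$ (equivalently, that the root vectors $E_{\pm\alpha}$, $\alpha\in\Pi-\Lambda$, commute with $\mathfrak g_\Lambda$); the commutativity you invoke --- of the $c_\alpha$ with one another --- does not give this, and ``compute the stabilizer and check transitivity along the face'' is precisely where the fine root-space computations enter. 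Second, your disjointness argument (``$|\Lambda|$, the rank of the face, is a $G$-invariant'') is circular as stated: the rank of the boundary component through a point is well defined only after one knows the boundary components are the slices $kc_{\Pi-\Lambda}X_{\Lambda,0}$, which is what is being proved. Third, and most seriously, the identification of the $G$-orbit strata with the boundary components defined by chains of holomorphic arcs --- i.e., that a holomorphic arc in $\partial X$ through a point of one slice cannot leave that slice --- is the mathematical heart of the theorem, and you explicitly concede it as ``structural input'' to be taken from Kor\'anyi--Wolf \cite{Koranyi_Wolf_1965}. Granting yourself that step reduces the proposal to an outline of the known proof rather than a proof: what you establish is the routine reduction, and what the theorem actually asserts is assumed.
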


%The boundary components with $|\Lambda|=|\Pi| -1$ are called the maximal faces of $\widehat X$. 

%Moreover for fixed $\Lambda\subset\Pi$, the subdomain $X_\Sigma$ with $\Sigma\subset\Pi$ where $X_{\Sigma,0}\times X_{\Lambda,0}

\begin{definition}\label{k-component}
If $|\Lambda|  = |\Pi|-a$, then we will call $kc_{\Pi-\Lambda}X_{\Lambda, 0}$ 
an {\it $a$-component}. 
%In particular maximal faces are $1$-components.
\end{definition}

Remark that for any $1$-component $kc_{\Pi-\Lambda}X_{\Lambda,0}$,
there exists a unit disc $\Delta$ so that $\Delta\times kX_{\Lambda,0}$ can be 
embedded totally geodesically in $X$ and $kc_{\Pi-\Lambda,0}X_{\Lambda,0}\cong \{e^{i\theta}\}\times kX_{\Lambda,0}$
for some $e^{i\theta}\in \partial\Delta$ (cf. Proposition 1.7 in \cite{Mok_Tsai_1992}).

Let $B^\Lambda$ be the set of all elements of $G$ which preserves 
$c_{\Pi-\Lambda} X_{\Lambda}$ and $\frak b^\Lambda$ its Lie algebra.
By Kor\'anyi and Wolf, $B^\Lambda$ and $\frak b^\Lambda$ has the following structure.
Refer \cite{Koranyi_Wolf_1965} for details. We will follow their notation.

\begin{theorem}[Kor\'anyi-Wolf \cite{Koranyi_Wolf_1965}] 
$B^\Lambda$ is a maximal parabolic subgroup of $G$, and is the normalizer of $\text{Ad}_{c_{\Pi-\Lambda}^{-1}} N^{\Lambda,-}$ in $G$. The identity component of $B^\Lambda$ is given 
by 
\begin{equation}\label{decomposition}
\left\{G_{\Lambda}\cdot L_2^\Lambda \cdot 
\text{Ad}_{c_{\Pi-\Lambda}^{-1}} K_{\Pi-\Lambda, 1}^*\right\}\cdot \text{Ad}_{c_{\Pi-\Lambda}^{-1}} N^{\Lambda,-}
\end{equation}
and this is the Chevalley decomposition into reductive and unipotent parts.
\end{theorem}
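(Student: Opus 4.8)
The plan is to prove everything at the level of the Lie algebra $\mathfrak b^\Lambda$ of $B^\Lambda$ and to produce a single hyperbolic \emph{grading element} whose $\operatorname{ad}$-eigenspaces simultaneously exhibit parabolicity and the Chevalley decomposition. Since Wolf's theorem already identifies the boundary components up to the $K$-action, it suffices to treat the standard face $c_{\Pi-\Lambda}X_\Lambda$. The first step is to normalize by the partial Cayley transform: conjugating by $c_{\Pi-\Lambda}$ replaces $B^\Lambda$ by $\operatorname{Ad}_{c_{\Pi-\Lambda}}B^\Lambda$, the stabilizer of the configuration consisting of $X_\Lambda$ together with $N^{\Lambda,-}$ in standard position, so that from here on all computations reduce to root-space bookkeeping with respect to the Cartan subalgebra $\mathfrak h^{\mathbb C}$ and the strongly orthogonal system $\Pi$.

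The key mechanism is that the Cayley transform converts elliptic data into hyperbolic data. The central element $Z\in\mathfrak z(\mathfrak k)$ that defines the complex structure $J=\operatorname{Ad}_Z$ has purely imaginary $\operatorname{ad}$-eigenvalues, but the partial transform $c_{\Pi-\Lambda}$ sends the contributions of the roots $\alpha\in\Pi\setminus\Lambda$ into the real directions $X_\alpha=E_\alpha+E_{-\alpha}$. I would assemble from these a real semisimple element $H$ --- essentially $\operatorname{Ad}_{c_{\Pi-\Lambda}}$ applied to $\sum_{\alpha\in\Pi\setminus\Lambda}\sqrt{-1}\,H_\alpha$ --- whose $\operatorname{ad}H$ has real eigenvalues and for which $\mathfrak b^\Lambda=\bigoplus_{j\ge 0}\mathfrak g_j(H)$. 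Verifying this last equality, i.e. that the subalgebra preserving the face is exactly the sum of the nonnegative graded pieces, is what proves that $B^\Lambda$ is parabolic. Moreover $H$ can be taken proportional to the fundamental coweight dual to a single node of the restricted Dynkin diagram of $\mathfrak g$ (which is of type $C_r$ or $BC_r$), namely the node indexed by $|\Pi\setminus\Lambda|$; since exactly one node is thereby removed, $B^\Lambda$ is a maximal parabolic.

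With the grading in hand, the Chevalley decomposition is read off directly: the zero-eigenspace $\mathfrak g_0(H)$ is the reductive Levi factor and $\bigoplus_{j>0}\mathfrak g_j(H)$ is the nilradical. Transporting back by $c_{\Pi-\Lambda}^{-1}$, the unipotent part is $\operatorname{Ad}_{c_{\Pi-\Lambda}^{-1}}N^{\Lambda,-}$, and the reductive part splits, according to which roots survive in $\mathfrak g_0(H)$, into the non-compact Hermitian factor $G_\Lambda$ that genuinely acts on the boundary component $X_\Lambda$ (a Hermitian symmetric space of rank $|\Lambda|$) together with the two auxiliary factors $L_2^\Lambda$ and $\operatorname{Ad}_{c_{\Pi-\Lambda}^{-1}}K_{\Pi-\Lambda,1}^*$ arising from the roots strongly orthogonal to $\Pi\setminus\Lambda$ and from the residual $K$-part. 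Finally the normalizer assertion is formal: a parabolic subgroup is self-normalizing and coincides with the normalizer of its own unipotent radical, so $B^\Lambda=N_G\bigl(\operatorname{Ad}_{c_{\Pi-\Lambda}^{-1}}N^{\Lambda,-}\bigr)$.

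I expect the main obstacle to be the precise matching of the reductive part with the threefold product $G_{\Lambda}\cdot L_2^\Lambda\cdot\operatorname{Ad}_{c_{\Pi-\Lambda}^{-1}}K_{\Pi-\Lambda,1}^*$: one must disentangle the root spaces of $\mathfrak g_0(H)$ into the part acting effectively on $X_\Lambda$ and the part centralizing it, check that the three factors mutually commute enough to assemble a genuine reductive group, and track carefully how $c_{\Pi-\Lambda}$ permutes the compact and non-compact root spaces. The parabolicity and the self-normalizing property are comparatively soft once the correct hyperbolic element $H$ is identified; the delicate bookkeeping lies entirely in the explicit Levi factorization, which is exactly where the detailed Cayley-transform computations of Kor\'anyi and Wolf become indispensable.
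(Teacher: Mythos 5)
There is an important mismatch of premise here: the paper does not prove this statement at all --- it is quoted as a known theorem of Kor\'anyi--Wolf \cite{Koranyi_Wolf_1965} --- so there is no in-paper proof to compare yours against. Judged on its own terms, your outline follows what is indeed the standard route to this result, and in essence the route of Kor\'anyi--Wolf themselves: Cayley-transform the elliptic data $\sqrt{-1}H_\alpha$, $\alpha\in\Pi\setminus\Lambda$, into a real hyperbolic element, and read off parabolicity, maximality, and the Chevalley decomposition from the resulting $\operatorname{ad}$-grading. This is even visible elsewhere in the paper: Lemma \ref{bracket} invokes Kor\'anyi--Wolf's Lemma 6.3, which is exactly the eigenvalue bookkeeping for $\operatorname{ad}_{-Y_{\Pi-\Lambda}}$ that your element $H$ is meant to encode. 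Your soft steps are also correct: a parabolic subgroup of a semisimple group is self-normalizing and equals the normalizer of its unipotent radical, so the normalizer claim follows once the unipotent radical is identified with $\operatorname{Ad}_{c_{\Pi-\Lambda}^{-1}}N^{\Lambda,-}$.

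However, what you have written is a plan rather than a proof: both decisive assertions are stated as goals, not established. First, the equality $\mathfrak b^\Lambda=\bigoplus_{j\ge 0}\mathfrak g_j(H)$ requires proving two inclusions, and the harder one --- that no element with a nonzero negative-weight component can preserve the face $c_{\Pi-\Lambda}X_\Lambda$ --- is never argued. Second, the factorization of the Levi $\mathfrak g_0(H)$ into the three factors $G_\Lambda$, $L_2^\Lambda$, $\operatorname{Ad}_{c_{\Pi-\Lambda}^{-1}}K^*_{\Pi-\Lambda,1}$ is precisely the content of the theorem, and you yourself flag it as the part you do not know how to carry out; deferring it defers the theorem. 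There is also a genuine subtlety you gloss over: $c_{\Pi-\Lambda}$ lies in $G_c$ (not in $G$), so conjugation by it does not preserve $G$ but carries it to a different, isomorphic real form inside $G^{\mathbb C}$; all the root-space bookkeeping must be done relative to that transported real form, which is exactly why Kor\'anyi--Wolf's explicit computations with $\operatorname{Ad}_{c_{\Pi-\Lambda}}$ on each root space are unavoidable. In short: right strategy, correct formal reductions, but the core of the theorem is outlined rather than proved.
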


Note that we have \begin{equation}\nonumber
\frak m^- = \frak m_{\Pi-\Lambda,1}^- + \frak m_2^{\Lambda, -} + \frak m_\Lambda^-
\end{equation}
and for $E\in \frak m^-$, we can express $E = E_1 + E_2 + E_3$ with 
$E_1\in  \frak m_{\Pi-\Lambda,1}^- $, $E_2 \in \frak m_2^{\Lambda, -}$ and 
$E_3 \in  \frak m_\Lambda^-$.
Let $\nu$ be a complex antilinear map of $\frak m^\pm$ onto $\frak m^\mp$ preserving this direct decomposition.
For all $W\in \Omega \cap \frak m_\Lambda^-$, define the linear transformation 
$\mu(W) \colon \frak m_2^{\Lambda, -}\rightarrow \frak m_2^{\Lambda,-}$ by 
$
\mu(W) U = \text{Ad}_W \tau_{\Pi-\Lambda} \nu(U)$ with 
$\tau_{\Pi-\Lambda} = \text{Ad}_{c_{\Pi-\Lambda}^2}$.
For all $V\in \frak m_2^{\Lambda,-}$, define the linear function 
$f_V\colon \frak m_\Lambda^-\rightarrow \frak m_2^{\Lambda,-}$ by 
$f_V(W) = (I+\mu(W))V$.
Finally, for all $W\in \Omega \cap \frak m_\Lambda^-$, define the vector-valued bilinear form 
$F_W\colon \frak m_2^{\Lambda, -}\times \frak m_2^{\Lambda,-} \rightarrow \frak m_{\Pi-\Lambda,1}^-$ by 
$
F_W(U,V) = -\frac{i}{2} \left[ U, \tau_{\Pi-\Lambda}(\nu (I+\mu(W)))^{-1}V\right]$.

\begin{theorem}[Kor\'anyi-Wolf \cite{Koranyi_Wolf_1965}] \label{Cayley transform}
\leavevmode
\begin{enumerate}
\item $L_2^\Lambda \cdot 
\text{Ad}_{c_{\Pi-\Lambda}^{-1}} K_{\Pi-\Lambda, 1}^*\cdot \text{Ad}_{c_{\Pi-\Lambda}^{-1}} N^{\Lambda,-}$ acts on $B^\Lambda$ trivially.
\item \label{translation}
$N^{\Lambda,-}$ acts on $\frak m^-$ by
\begin{equation}\label{translate}
g(E) = E+U+f_V(E_3) + 2iF_{E_3}\left(E_2, f_V(E_3)\right) + iF_{E_3}\left(f_V(E_3), f_V(E_3)\right)
\end{equation}
where $g=\text{exp}\left(U+(I-\tau_{\Pi-\Lambda})(V)\right)$, 
$U\in \frak n_1^{\Lambda,-}$ and $V\in \frak m_2^{\Lambda,-}$. 
\item \label{adjoint}
$K^{\Lambda*}$ acts on $\frak m^-$ by the adjoint representation and it preserves $\frak m_{\Pi-\Lambda,1}^-$, $\frak m_2^{\Lambda,-}$ and $\frak m_\Lambda^-$.
\item \label{real}
On $\frak m_{\Pi-\Lambda,1}^-$,  $K_{\Pi-\Lambda,1}^*$ is real, {$G_\Lambda$} and $L_2^{\Lambda}$ are trivial. These actions are $\xi$-equivariant; in particular $K^{\Lambda *}\cdot N^{\Lambda,-}$ preserves $\xi(\frak m^-)$.
\item \label{K}
 For all $k\in K^{\Gamma *}$, $W\in \Omega_\Gamma$ and $U, V\in \frak m_2^{\Gamma,-}$, we have 
\begin{equation}\label{bilinear form}
 \text{Ad}_k F_W(U,V) = F_{\text{Ad}_k W} \left(\text{Ad}_k U, \text{Ad}_k V\right).
\end{equation}
\item \label{c(Omega)}
$\Omega$ can be realized as a homogeneous Siegel domain of the third type given by
\begin{equation}\nonumber
c_{\Pi-\Lambda} \Omega = \left\{E\in \frak m^- \colon \text{Im } E_1 - \text{Re } F_{E_3}(E_2,E_2)\in c^\Lambda,\,
E_3\in X_{\Lambda,0} =\Omega\cap \frak m_\Lambda^- \right\}.
\end{equation}
\end{enumerate}
\end{theorem}
\begin{lemma}\label{bracket} 
$[[\frak m_\Lambda^+, \frak m_2^{\Lambda,-}], \frak m_\Lambda^+]=0$
\end{lemma}
\begin{proof}
By Lemma 6.3 in \cite{Koranyi_Wolf_1965}, one sees that $\text{Ad}_{c_{\Pi - \Lambda}^{-1}} \frak r_2^{\Lambda,-}$
is an eigenspace with an eigenvalue $-1$ of $\text{ad}_{-Y_{\Pi-\Lambda}}$ where  
$\frak r_2^{\Lambda,-} = \frak q_2^{\Lambda, +} + \frak m_2^{\Lambda,-}$ with 
$\frak q_2^{\Lambda, +}\subset \frak k^\mathbb C$.
Moreover $\frak m^+_\Lambda$ belongs to the $0$-eigenspace of $\text{ad}_{-Y_{\Pi-\Lambda}}$.
Hence $$[\frak m_\Lambda^+, \frak m_2^{\Lambda,-}]\subset (\frak q_2^{\Lambda, +} + \frak m_2^{\Lambda,-})\cap \frak k^\mathbb C= \frak q_2^{\Lambda,+}.$$

Note that 
$[\frak m_\Lambda^+, \frak q_2^{\Lambda,+}]$ belongs to the positive root space.
Since $$[\frak m_\Lambda^+, \frak q_2^{\Lambda,+}]\subset \frak r_2^{\Lambda, -}\cap
\frak m^\mathbb C=\frak m_2^{\Lambda,-},$$ it should be zero.

\end{proof}

\section{Pluriharmonicity of invariant functions}\label{psh}
In this section, we denote by $M^{\mathbb C}_{p,q}$ the set of $p\times q$ complex matrices.
Denote by $SM^{\mathbb C}_{n,n}$ (resp. $ASM^{\mathbb C}_{n,n}$) the set of symmetric (resp. antisymmetric) 
$n\times n$ complex matrices.
Irreducible bounded symmetric domains consist of the following 
four kinds of classical type and two exceptional type domains:

\begin{enumerate}[(I)]
\item $\Omega_{p,q}^I = \left\{ Z\in M_{p,q}^{\mathbb C} : I_r -  ZZ^* >0 \right\}$,\label{1}
\item $\Omega_{n}^{II} = \left\{ Z\in  M_{n,n}^{\mathbb C} : I_n - ZZ^*>0, \,\, Z^t = -Z  \right\}$, \label{2}
\item $\Omega_{n}^{III} = \left\{ Z\in M_{n,n}^{\mathbb C} : I_n - ZZ^*  >0,\,\, Z^t = Z  \right\}$,\label{3}
\item $ \Omega_n^{IV} = \left\{ Z=(z_1,\ldots, z_n)\in {\mathbb C}^n : 
ZZ^* <1 ,\, 0< 1-2 ZZ^*+ \left| ZZ^t \right|^2   \right\}$, \label{4}
\item $ \Omega_{16}^V = \left\{z\in M^{\mathbb O_\mathbb C}_{1,2} : 
1-(z|z) + (z^\#|z^\#) >0,\, 2-(z|z)>0 \right\}$, and \label{5}
\item $
\Omega_{27}^{VI} = 
\left\{ z\in H_3(\mathbb O_\mathbb C)  : 
1-(z|z) + (z^\#|z^\#) - |\det z|^2 >0,\, \right.\\
\left.
\quad\quad\quad\quad\quad\quad
\quad\quad\quad\quad\quad\quad
\quad\quad\quad\quad\quad\quad
 3-2(z|z) + (z^\#|z^\#) >0,\, 3-(z|z)>0 \right\}.
$\label{6}
\end{enumerate}
Here $\mathbb O_\mathbb C$ is the complex 8-dimensional algebra of complex octonions.
For $a=(a_0,a_1,\ldots, a_7)\in \mathbb O_\mathbb C$ with $a_i\in \mathbb C$, 
let $a \mapsto \tilde a:=(a_0, -a_1,\ldots, -a_7) $ denote the Cayley conjugation
and $a\mapsto \overline a:= (\overline a_0, \overline a_1, \ldots, \overline a_7)$ 
the complex conjugation.
The Hermitian scalar product is given by $(a|b) = a\tilde{\overline b} + \tilde a\overline b$.
Let $H_3(\mathbb O_{\mathbb C})$ be the complex vector space of $3\times 3$ 
matrices with entries in $\mathbb O_\mathbb C$ 
which are Hermitian with respect to the Cayley conjugation in $\mathbb O_{\mathbb C}$.
Explicitly $A\in H_3(\mathbb O_\mathbb C)$ can be expressed as 
\begin{equation} \label{a}
A = \left( \begin{array}{ccc}
\alpha_1 & a_3 & \tilde a_2 \\
\tilde a_3 & \alpha_2 & a_1 \\
a_2 & \tilde a_1 & \alpha_3
\end{array}\right)
\quad \text{ with } a_i\in \mathbb O_\mathbb C
\text{ and } \alpha_i \in \mathbb C \text{ for all } i=1,2,3.
\end{equation}
For $A\in H_3(\mathbb O_\mathbb C)$ expressed by \eqref{a}, 
let $A^\# \in H_3(\mathbb O_\mathbb C)$
be the adjoint matrix of $A$ expressed by
\begin{equation}\label{explicit}
A^\# = \left( \begin{array}{ccc}
\alpha_2\alpha_3-a_1\tilde a_1 
& \tilde a_2\tilde a_1 - \alpha_3 a_3 
& \widetilde{\tilde a_1\tilde a_3 - \alpha_2 a_2 } \\
\widetilde{ \tilde a_2\tilde a_1 - \alpha_3 a_3} 
& \alpha_3 \alpha_1-a_2\tilde a_2 
& \tilde a_3\tilde a_2 - \alpha_1 a_1 \\
\tilde a_1\tilde a_3 - \alpha_2 a_2 
&  \widetilde{\tilde a_3\tilde a_2 - \alpha_1 a_1}
& \alpha_1\alpha_2-a_3\tilde a_3
\end{array}\right).
\end{equation}
The Hermitian scalar product on $H_3(\mathbb O_\mathbb C)$ is given by 
$$
(A|B) 
= \sum_{i=1}^3 \alpha_i \overline\beta_i + \sum_{i=1}^3 (a_i|b_i).
$$
Explicitly, 
\begin{equation}\nonumber
\begin{aligned}
(A|A) &= \sum_{i=1}^3 |\alpha_i|^2 + 2 \sum_{i=1}^3 \left( |a_{i0}|^2 +\cdots+ |a_{i7}|^2\right)\\
(A^\#|A^\#) &= |\alpha_2\alpha_3-a_1\tilde a_1|^2 + |\alpha_3 \alpha_1-a_2\tilde a_2|^2\
+|\alpha_1\alpha_2-a_3\tilde a_3|^2\\
&\quad +(\tilde a_3\tilde a_2 - \alpha_1 a_1|\tilde a_3\tilde a_2 - \alpha_1 a_1)
+(\tilde a_1\tilde a_3 - \alpha_2 a_2 |\tilde a_1\tilde a_3 - \alpha_2 a_2)
\\
&\quad+(\alpha_1\alpha_2-a_3\tilde a_3|\alpha_1\alpha_2-a_3\tilde a_3)\\
|\det A|^2 &= \left| \alpha_1\alpha_2\alpha_3 - \sum_{i=1}^3 \alpha_i a_i\tilde a_i 
+ a_1(a_2a_3) + (\tilde a_3 \tilde a_2) \tilde a_1\right|^2.
\end{aligned}
\end{equation}
with $a_i=(a_{i0},\cdots, a_{i7})\in \mathbb O_\mathbb C$ for $i=1,2,3$.
Let $M^{\mathbb O_{\mathbb C}}_{1,2}$ denote the set of $1\times 2$ complex octonion matrices.
For $ z=(z_1,z_2)\in M^{\mathbb O_\mathbb C}_{1,2}$, we identify $z$ with 
$$\left( \begin{array}{ccc}
0& z_2&\tilde z_1\\
\tilde z_2 & 0&0\\
z_1&0&0
\end{array}\right) \in H_3(\mathbb O_{\mathbb C})$$
and apply the same notation $\#$, $(\cdot, \cdot)$ and so on. See \cite{Roos_2008} for more details.

\begin{lemma}\label{Bergman kernel of BSD}
Let $\Omega$ be an irreducible bounded symmetric domain and 
$N_\Omega$ its generic norm.
Then the Bergman kernel $K(z,z)$ of $\Omega$ can be
expressed by $c_1N_\Omega(z)^{-c_2}$ for some constant $c_1, c_2>0$ where $N_\Omega$ is the generic norm of $\Omega$. 
\end{lemma}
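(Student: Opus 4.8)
The plan is to leverage the homogeneity of $\Omega$ together with the automorphy relations satisfied by the Bergman kernel and by the generic norm, reducing the desired identity to its value at the origin. Write $G$ for the identity component of $\text{Aut}(\Omega)$; it acts transitively on $\Omega$, and I normalize the generic norm so that $N_\Omega(0,0)=1$, which is the case for each of the explicit expressions $S^I_{r,s},\ldots,S^{VI}$ recorded above. Let $p$ denote the genus of $\Omega$, so that $p=r+s$ for type I, $p=2(n-1)$ for type II, $p=n+1$ for type III, $p=n$ for type IV, and $p=12$, $p=18$ for the exceptional types $\Omega_{16}^V$ and $\Omega_{27}^{VI}$.

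First I would recall the two transformation laws. For every $g\in G$ the Bergman kernel satisfies
\begin{equation}\nonumber
K_\Omega(gz,\overline{gz})\,|J_g(z)|^2 = K_\Omega(z,\bar z),
\end{equation}
where $J_g$ denotes the holomorphic Jacobian determinant of $g$. The generic norm of a bounded symmetric domain obeys the companion law
\begin{equation}\nonumber
N_\Omega(gz,\overline{gz}) = |J_g(z)|^{2/p}\,N_\Omega(z,\bar z),
\end{equation}
the standard transformation property of the Jordan-theoretic norm function; one checks it by hand in the disc case, where $N_\Omega(z,\bar z)=1-|z|^2$, $p=2$, and $|J_g|^{2/p}=|J_g|$ reproduces exactly the automorphy factor $(1-|a|^2)/|1-\bar a z|^2$ of the M\"obius map.

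With both laws available, I would introduce the positive real-analytic function
\begin{equation}\nonumber
\Phi(z):=K_\Omega(z,\bar z)\,N_\Omega(z,\bar z)^{p}
\end{equation}
on $\Omega$. Combining the two displayed identities yields $\Phi(gz)=|J_g(z)|^{-2}\,|J_g(z)|^{2}\,\Phi(z)=\Phi(z)$ for every $g\in G$, so $\Phi$ is $G$-invariant; since $G$ acts transitively, $\Phi$ is constant, equal to $\Phi(0)=K_\Omega(0,0)\,N_\Omega(0,0)^{p}=K_\Omega(0,0)=:c_1>0$. Hence $K_\Omega(z,\bar z)=c_1\,N_\Omega(z,\bar z)^{-p}$, which is the assertion with $c_2=p>0$.

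The only step carrying genuine content is the transformation law for $N_\Omega$; everything else is formal. If one prefers to bypass the Jordan-theoretic input, the same conclusion follows from a case-by-case comparison with the classical Bergman kernel formulas $c\,\det(I_r-ZZ^*)^{-(r+s)}$, $c\,\det(I_n-ZZ^*)^{-(n-1)}$, $c\,\det(I_n-ZZ^*)^{-(n+1)}$ and $c\,(S_n^{IV})^{-n}$ for the four classical families, using $\det(I_n-ZZ^*)=(s_n^{II})^2$ for type II. In that second route the residual obstacle is the exceptional domains $\Omega_{16}^V$ and $\Omega_{27}^{VI}$, whose Bergman kernels $c\,(S^V)^{-12}$ and $c\,(S^{VI})^{-18}$ rest on the octonionic determinant identities for $S^V$ and $S^{VI}$ recorded above; the homogeneity argument treats all six types at once and is therefore the route I would follow.
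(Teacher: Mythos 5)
Your proposal is correct, but note that the paper itself offers no proof of this lemma at all: it is stated as a classical fact (going back to Hua's explicit computations for the classical domains and to the Jordan-theoretic treatments of Lo\'os and Faraut--Kor\'anyi in general, where the sharper statement $K_\Omega(z,\bar z)=V(\Omega)^{-1}N_\Omega(z,\bar z)^{-p}$ with $p$ the genus is proved). Your homogeneity argument is essentially the standard proof of that classical fact, and it is sound: the invariance of $\Phi(z)=K_\Omega(z,\bar z)N_\Omega(z,\bar z)^{p}$ under the transitive $G$-action, combined with $\Phi(0)=K_\Omega(0,0)>0$, does give $c_1=K_\Omega(0,0)$ and $c_2=p$, and your table of genera for the six types is accurate. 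The one point you should be explicit about is where the automorphy law $N_\Omega(gz,\overline{gz})=|J_g(z)|^{2/p}N_\Omega(z,\bar z)$ comes from, since in several references it is \emph{derived from} the kernel formula you are trying to prove; to avoid circularity one must establish it independently, which is done in Jordan triple theory via the Bergman operator $B(z,w)$ (one proves algebraically that $\det B(z,w)=N_\Omega(z,w)^{p}$ and that $B$ transforms equivariantly under $\mathrm{Aut}(\Omega)$), or else one falls back on your second route, the case-by-case comparison with the explicit kernels, where the exceptional domains $\Omega^{V}_{16}$ and $\Omega^{VI}_{27}$ require separate (known but nontrivial) computations. With that citation made precise, your uniform argument is arguably preferable to a case-by-case check, since it treats all six types at once.
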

The generic norms $S_{p,q}^{I}$, $S_n^{II} $, $S_n^{III}$, $S_n^{IV}$, $S^{V}$ and $S^{VI}$ and $c_2$ in Lemma \ref{Bergman kernel of BSD} of the corresponding domains are given by
\smallskip
\begin{enumerate}[(i)]
\item
$S_{p,q}^{I}(Z,\overline Z) = \det(I_r -ZZ^*) 
\quad \text{ for }  Z\in M_{p,q}^{\mathbb C} $, $\quad $$c_2 = p+q$,
\item
$S_n^{II} (Z,\overline Z) = s_n^{II}(Z) 
\quad\text{ for } Z\in ASM^{\mathbb C}_{n,n}$, $\quad c_2 = 2(n-1)$,
\item
$S_n^{III} (Z,\overline Z)= \det(I_n -ZZ^*) 
\quad\text{ for } Z\in SM^{\mathbb C}_{n,n}$, $\quad c_2 = n+1$,
\item
$S_n^{IV}(Z,\overline Z) = 1-2 ZZ^* + \left| ZZ^t \right|^2 
\quad\text{ for } Z\in {\mathbb C}^n$, 
$\quad c_2 = n$,
\item
$S^{V}(Z,\overline Z) = 1-(Z|Z) + (Z^\#|Z^\#) 
\quad \text{ for }Z\in M^{\mathbb O_{\mathbb C}}_{1,2}$, $\quad c_2 =12$ and 
\item
$S^{VI}(Z,\overline Z) = 1-(Z|Z) + (Z^\#|Z^\#) -\left|\det Z\right|^2
\quad  \text{ for } Z\in H_3(\mathbb O_{\mathbb C})$, $\quad c_2 = 18$
\end{enumerate}
with $\det(I_n -ZZ^*) = s_n^{II}(Z)^2$ for some polynomial 
$s_n^{II}(Z)$ and $Z\in ASM^{\mathbb C}_{n,n}$.

\medskip

For a bounded symmetric domain $\Omega$, possibly reducible, 
let $K_\Omega\colon \Omega\times\Omega\rightarrow \mathbb C$ denote the 
Bergman kernel of $\Omega$.
Define a function $\psi_\Omega \colon \Omega\times \Omega\rightarrow \mathbb R$ by
\begin{equation}\label{psi}
\psi_\Omega (z,w) := \frac{K_\Omega(z,z)K_\Omega(w,w)}{|K_\Omega(z,w)|^2}.
\end{equation}
Since  $\psi_\Omega(\gamma(z),\gamma(w)) = \psi_\Omega(z,w)$ for any 
$\gamma\in \text{Aut}(\Omega)$
and $\psi_\Omega(z,w)>1$ whenever $z\neq w$, $\log \psi_\Omega$ is a well defined function on $\Omega\times\Omega$ which is invariant under the diagonal action of $\text{Aut}(\Omega)$ and positive off the diagonal.
\begin{lemma}\label{lemma_psh}
Let $\Omega$ be a bounded symmetric domain, possibly reducible.
$\log \psi_\Omega$ is a $C^\infty$ psh function
which is invariant under the diagonal action of $\text{Aut}(\Omega)$.
%Moreover it is strictly psh on $\Omega\times \Omega\setminus \text{diag}(\Omega)$.
\end{lemma}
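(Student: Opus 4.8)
The plan is to use the explicit form of the Bergman kernel to turn the statement into a single, clean positivity of the Bergman metric, and then to prove that positivity. I first dispose of the easy points. Writing $J_\gamma$ for the holomorphic Jacobian of $\gamma\in\mathrm{Aut}(\Omega)$, the transformation law $K_\Omega(\gamma z,\gamma w)\,\det J_\gamma(z)\,\overline{\det J_\gamma(w)}=K_\Omega(z,w)$ makes every Jacobian factor in \eqref{psi} cancel, so $\psi_\Omega(\gamma z,\gamma w)=\psi_\Omega(z,w)$; this is the diagonal invariance. Next I reduce to the irreducible case: if $\Omega=\Omega_1\times\cdots\times\Omega_m$ then $K_\Omega=\prod_j K_{\Omega_j}$ on the corresponding variable blocks, hence $\log\psi_\Omega=\sum_j\log\psi_{\Omega_j}$ is a sum of functions of disjoint groups of variables, and pluriharmonicity/psh of the pieces gives it for the product. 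For irreducible $\Omega$, Lemma \ref{Bergman kernel of BSD} gives $K_\Omega(z,z)=c_1N_\Omega(z)^{-c_2}$ with $c_1,c_2>0$; writing $N_\Omega(z,\bar w)$ for the polarization (the sesqui-polynomial with $N_\Omega(z,\bar z)=N_\Omega(z)$) we get $K_\Omega(z,w)=c_1N_\Omega(z,\bar w)^{-c_2}$, so that
\[
\log\psi_\Omega=c_2\,\Theta,\qquad \Theta(z,w):=\log|N_\Omega(z,\bar w)|^2-\log N_\Omega(z,\bar z)-\log N_\Omega(w,\bar w).
\]
Since $N_\Omega$ is a polynomial that is nonvanishing on $\Omega\times\Omega$ (equivalently $\psi_\Omega$ is finite and well defined) and $N_\Omega(z,\bar z)>0$, the function $\Theta$ is $C^\infty$; note that $\Theta$ is precisely the Calabi diastasis of the Bergman metric. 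As $c_2>0$, it remains to show $\Theta$ is psh.

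Next I compute the complex Hessian $L(\Theta)=\bigl(\partial_{\zeta_a}\partial_{\bar\zeta_b}\Theta\bigr)$ in the variables $\zeta=(z,w)$ and exploit homogeneity. Because $\log N_\Omega(z,\bar w)$ is holomorphic in $z$ and antiholomorphic in $w$, it contributes only to the $z\bar w$--block; its conjugate contributes only to the $w\bar z$--block; and the two diagonal terms contribute the positive Hermitian forms $G(z)_{i\bar j}:=-\partial_{z_i}\partial_{\bar z_j}\log N_\Omega(z,\bar z)$ (a positive multiple of the Bergman metric) and $G(w)$. Setting $P(z,w):=-\partial_z\partial_{\bar w}\log N_\Omega(z,\bar w)$, which satisfies $P(z,z)=G(z)$ by polarization, we obtain
\[
L(\Theta)=\begin{pmatrix} G(z) & -P(z,w)\\ -P(z,w)^* & G(w)\end{pmatrix}.
\]
Since psh is a biholomorphic invariant and $\Theta$ is invariant under the diagonal action, it suffices to check $L(\Theta)\succeq 0$ at points $(0,w)$, because $\mathrm{Aut}(\Omega)$ acts transitively on $\Omega$. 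Here the key simplification appears: each generic norm in (i)--(vi) is, on the diagonal, a sum of terms with equally many factors $z$ and $\bar z$, so its polarization $N_\Omega(z,\bar w)$ splits into bihomogeneous pieces of bidegree $(k,k)$, with $N_\Omega(0,\bar w)=1$ and bidegree-$(1,1)$ part equal to the flat Hermitian form $G(0)$. Consequently only that bilinear part survives one $z$-derivative at $z=0$, giving $\partial_{z_i}\partial_{\bar w_k}\log N_\Omega(z,\bar w)\big|_{z=0}=-G(0)_{i\bar k}$ independently of $w$, i.e. $P(0,w)=G(0)$.

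Plugging this in, $L(\Theta)\big|_{(0,w)}=\left(\begin{smallmatrix} G(0) & -G(0)\\ -G(0) & G(w)\end{smallmatrix}\right)$, and since $G(0)\succ0$ the Schur complement of the upper-left block is exactly $G(w)-G(0)$. Thus the whole problem collapses to the single inequality
\[
G(w)\succeq G(0)\qquad\text{for all }w\in\Omega,
\]
that is, the Bergman metric everywhere dominates its value at the origin. This is the heart of the matter and the step I expect to be the main obstacle. I would establish it uniformly through the Jordan-triple picture: with $B(w,w)$ the Bergman operator one has $0\prec B(w,w)\preceq I$ on $\Omega$ (equality only at $w=0$), hence $B(w,w)^{-1}\succeq I$, and $G(w)$ is a positive multiple of $\langle B(w,w)^{-1}\,\cdot\,,\cdot\rangle$, which yields $G(w)\succeq G(0)$. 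Alternatively, and matching the "straightforward calculation using the explicit kernel" in the outline, I would verify it type by type; for instance on $\Omega^{I}_{r,s}$ one has $G(w)\,\xi\cdot\bar\xi\ \propto\ \mathrm{tr}\!\left[(I-WW^*)^{-1}\xi\,(I-W^*W)^{-1}\xi^*\right]\ \ge\ \mathrm{tr}(\xi\xi^*)$ by operator monotonicity of inversion, since $I-WW^*\preceq I$ and $I-W^*W\preceq I$, and the remaining classical types II, III, IV and the exceptional V, VI are handled by the same monotonicity applied to their explicit metrics. It is worth stressing that this positivity is special to bounded symmetric domains and uses the minus sign (negative curvature) in the generic norm: for a generic reproducing kernel the analogue fails, e.g. the Fubini--Study kernel $1+z\bar w$ on $\mathbb{P}^1$ produces an indefinite $L(\Theta)$, so the argument genuinely rests on $G(w)\succeq G(0)$ rather than on the mere Cauchy--Schwarz bound $\psi_\Omega\ge 1$.
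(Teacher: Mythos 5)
Your proposal is correct, and its skeleton coincides with the paper's: both get invariance from the transformation law of the Bergman kernel, reduce by transitivity of $\mathrm{Aut}(\Omega)$ to points $(0,w)$, observe that the complex Hessian there has the block form $\left(\begin{smallmatrix} G(0) & -G(0)\\ -G(0) & G(w)\end{smallmatrix}\right)$ (this is the paper's \eqref{ddlogpsi} together with \eqref{nobars}), and note that positive semidefiniteness is then equivalent to the single inequality $G(w)\succeq G(0)$ (the paper phrases this as ``if $-\partial_w\overline\partial_w\log\det(I-w\overline w^t)\geq I$\,\dots'' and completes squares rather than naming the Schur complement). Where you genuinely diverge is in how this key inequality is proved. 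The paper pushes $w$ into a maximal polydisc by the isotropy action and verifies $G(w_0)\succeq I$ at diagonal points $w_0$ through lengthy explicit determinant computations: carried out in full for type I, inherited by restriction for types II and III, redone for type IV, and only sketched for the exceptional types V and VI. You instead prove $G(w)\succeq G(0)$ at every $w$ uniformly, either via the Jordan-triple identity $G(w)=\langle B(w,w)^{-1}\cdot,\cdot\rangle$ combined with $0\prec B(w,w)\preceq I$ on $\Omega$, or type by type via antitonicity of inversion (your type I verification, $\mathrm{tr}\left[A\xi B\xi^*\right]\geq\mathrm{tr}\left[\xi\xi^*\right]$ for $A,B\succeq I$, is correct). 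Your route is shorter, needs no maximal polydisc reduction, treats all six types including V and VI on an equal footing, and isolates the conceptual content: the Bergman metric of a bounded symmetric domain dominates its flat value at the origin, a manifestation of nonpositive curvature, as your $\mathbb{P}^1$ counterexample rightly emphasizes. What the paper's computation buys in exchange is the explicit diagonal form of the Hessian on the polydisc --- inequality \eqref{plurisubharmonicity} and the matrix \eqref{ddbar type4} --- and this is not a throwaway: those formulas are reused verbatim in the proofs of Theorem \ref{main theorem1} and Theorem \ref{hyperconvex}, where the precise entries $\bigl((1-|w_{ii}|^2)(1-|w_{jj}|^2)\bigr)^{-1}$, and not merely the bound $\succeq I$, enter the completion-of-squares arguments. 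So your argument is a valid and cleaner substitute for Lemma \ref{lemma_psh} itself, though not for the auxiliary role its computations play later in the paper.
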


\begin{proof}
Since $\Omega$ is a finite product on irreducible bounded symmetric domains,
we may assume that $\Omega$ is irreducible.
Since $\psi_\Omega$ is invariant under the diagonal action of $\text{Aut}(\Omega)$, we only
need to prove that $\partial \overline \partial \log \psi_\Omega(z,w) $ is positive semi-definite at $(0,w_0)$, when $w_0$ belongs to the maximal polydisc of $\Omega$.\\
{\bf Type I, $\Omega_{p,q}^I$: }
Since $K_{\Omega_{p,q}^I}(z,w) = c_1\det( I - z\overline w^t)^{-c_2}$ for some constants $c_1, c_2 >0$, we have
 \begin{equation}\label{ddlogpsi}
\begin{aligned}
\partial \overline \partial \log \psi_{\Omega_{p,q}^I}(z,w) 
&= \partial \overline \partial \left( \log K_{\Omega_{p,q}^I}(z,z) + \log K_{\Omega_{p,q}^I}(w,w)
-  \log K_{\Omega_{p,q}^I}(z,w) -  \log K_{\Omega_{p,q}^I}(w,z)\right)\\
&= -c_2 \partial \overline \partial \left(\log \det( I - z\overline z^t) + \log \det( I - w\overline w^t\right) 
- \log \det( I - z\overline w^t) - \log \det( I - w\overline z^t))\\
&= c_2 \left( \begin{array}{cc}
-\partial_z \overline \partial_z \log \det( I - z\overline z^t) 
& \partial_z \overline \partial_w \log \det( I - z\overline w^t) \\
\partial_w \overline \partial_z \log \det( I - w\overline z^t) 
&-\partial_w \overline \partial_w \log \det( I - w\overline w^t) 
\end{array}\right)\\
&=c_2 \left( \begin{array}{cc}
I&-I \\
-I&-\partial_w \overline \partial_w \log \det( I - w\overline w^t) 
\end{array}\right)\quad \text{at } (0,w_0).
\end{aligned}  
\end{equation}
If $-\partial_w\overline \partial_w \log \det (I-w\overline w^t) \geq I$, 
for $X,\, Y \in \mathbb C^{pq}$ we have
\begin{equation}\nonumber
\begin{aligned}
&(X,Y) \left( \begin{array}{cc}
I&-I \\
-I&-\partial_w \overline \partial_w \log \det( I - w\overline w^t) 
\end{array}\right) 
\left( \begin{array}{c}
\overline X^t\\
\overline Y^t
\end{array}\right)\\
&=\, X\overline X^t - Y\overline X^t - X\overline Y^t - Y \partial_w\overline \partial_w 
\log \det (I-w\overline w^t)\overline Y^t
\geq\, (X-Y) \overline{(X-Y)}^t
\end{aligned}
\end{equation}
and hence $\partial\bar\partial \log \psi_{\Omega_{p,q}^I}$ is plurisubharmonic.
Note that \begin{equation}\label{logdet}
\begin{aligned}
- \frac{\partial^2\log \det(I-w\overline w^t)}{\partial w_{ij} \partial\overline w_{kl}}
&=\frac{-1}{\det(I-w\overline w^t)} 
\frac{\partial^2\det(I-w\overline w^t)}{\partial w_{ij}  \partial\overline w_{kl}}\\
  &\quad\quad+\frac{1}{\det(I-w\overline w^t)^2} 
\frac{\partial\det(I-w\overline w^t)}{\partial w_{ij}} 
\frac{\partial\det(I-w\overline w^t)}{\partial\overline  w_{kl}}.
\end{aligned}
\end{equation}
Let $w = (w_{ij})\in M_{p,q}^\mathbb C$ with $p\leq q$. 
Denote $w=\left( \begin{array}{c}
w_1\\
\vdots\\
w_p
\end{array}\right)$ with $w_j = (w_{j1}, \ldots, w_{jq})$.
Let's consider $\partial\overline \partial \det( I - w\overline w^t)$ at
$$w_0=\text{diag}(w_{11},\ldots, w_{pp}):= \left(\begin{array}{ccc|c}
w_{11}&0&0&0\\
0&\ddots&0&0\\
0&0& w_{pp}&0
\end{array} \right).$$
Since we have
\begin{equation}\label{I-ww}
I-w\overline w^t = \left( \begin{array}{cccc}
1-w_1\overline w_1^t& -w_1\overline w_2^t& \ldots& -w_1\overline w_p^t\\
-w_2\overline w_1^t&1 -w_2\overline w_2^t& \ldots& -w_2\overline w_p^t\\
\vdots  &\vdots &&\vdots  \\
 -w_p\overline w_1^t& -w_p\overline w_2^t& \ldots& 1-w_p\overline w_p^t
\end{array}
\right)
\end{equation}
by the derivative formula of the determinant, one obtains 
 \begin{equation}\label{ij}
\frac{\partial\det\left(I-w\overline w^t\right) }{\partial w_{ij}} 
= -\delta_{ij}\frac{  \overline w_{ii}\Pi_{\sigma=1}^p \left(1-|w_{\sigma \sigma}|^2\right)}{ 1-|w_{ii}|^2}
\end{equation}
and 
\begin{equation}\nonumber
\begin{aligned}
&\frac{\partial ^2\det(I-w\overline w^t)}{\partial w_{ij} \partial\overline  w_{kl}} 
= \det~ \begin{blockarray}{ccccccc}
&&&k^{th}&&\\
\begin{block}{(ccc|c|cc)c}
1-w_1\overline w_1^t& -w_1\overline w_2^t& \ldots&-w_1\frac{\partial \overline w_k^t}{\partial\overline  w_{kl}}&\ldots& -w_1\overline w_p^t&\\
\vdots  &\vdots &&\vdots&&\vdots & \\\cline{1-6}
-\frac{\partial w_i}{\partial w_{ij}} \overline w_1^t&- \frac{\partial w_i}{\partial w_{ij}} \overline w_2^t
&\cdots& -\frac{\partial w_i}{\partial w_{ij}} \frac{\partial \overline w_k^t}{\partial\overline  w_{kl}}&\ldots& -\frac{\partial w_i}{\partial w_{ij}}\overline  w_p^t &i^{th} \\\cline{1-6}
\vdots  &\vdots &&\vdots&&\vdots&  \\
 -w_p\overline w_1^t& -w_p\overline w_2^t& \ldots& -w_p\frac{\partial\overline  w_k^t}{\partial\overline  w_{kl}}&\ldots& 1-w_p\overline w_p^t&\\
\end{block}
\end{blockarray}.
\end{aligned}
\end{equation}
Since we have 
\begin{equation}\nonumber
\frac{\partial w_i}{\partial w_{ij}} \overline w_\sigma^t 
= \overline w_{\sigma j},\quad
\frac{\partial w_i}{\partial w_{ij}} 
\frac{\partial \overline w_k^t}{\partial \overline w_{kl}}
=\delta_{jl}
\quad \text{ and }\quad
w_\sigma\frac{\partial \overline w_k^t}{ \partial\overline w_{kl}} 
= w_{\sigma l}
\end{equation}
for $\sigma = 1,\ldots, p$, 
one obtains
\begin{equation}\label{matrix1}
\frac{\partial ^2\det(I-w\overline w^t)}{\partial w_{ij} \partial\overline  w_{kl}} 
= \det~ \begin{blockarray}{ccccccc}
&&&k^{th}&&\\
\begin{block}{(ccc|c|cc)c}
1-w_1\overline w_1^t& -w_1\overline w_2^t& \ldots&-w_{1l}&\ldots& -w_1\overline w_p^t&\\
\vdots  &\vdots &&\vdots&&\vdots&  \\\cline{1-6}
-\overline w_{1j}&-\overline w_{2j}&\cdots& - \delta_{jl}&\ldots& -\overline w_{pj} &i^{th}\\\cline{1-6}
\vdots  &\vdots &&\vdots&&\vdots & \\
 -w_p\overline w_1^t& -w_p\overline w_2^t& \ldots& -w_{pl}&\ldots& 1-w_p\overline w_p^t&\\
\end{block}
\end{blockarray}.
\end{equation}

If $i <k$, the $k$-th row of the matrix in \eqref{matrix1} is $(0,\ldots, 0,-w_{kl},0,\ldots,0)$
when $w = \text{diag} (w_{11},\ldots, w_{pp})$ where $-w_{kl}$ 
is located in the $k$-th component.
Hence if $k\neq l$, then \eqref{matrix1} should be zero.
By applying the same way, one obtains that 
\begin{equation}\nonumber
\frac{\partial ^2\det (I-w\overline w^t)}{\partial w_{ij} \partial\overline  w_{kl}}  = 0
\quad\quad \text{ if }
i\neq k \text{ and } k\neq l.
\end{equation}
Hence by \eqref{logdet}, \eqref{ij}, \eqref{matrix1}
\begin{equation}\nonumber
\frac{\partial ^2\log\det (I-w\overline w^t)}{\partial w_{ij} \partial\overline  w_{kl}}  = 0
\quad\quad \text{ if }
i\neq k \text{ and } k\neq l.
\end{equation}

Suppose that $i=k=l \neq j$.
Then $i$-th row of \eqref{matrix1} is zero at 
$w_0 = \text{diag} (w_{11},\ldots, w_{pp})$. Hence by  \eqref{logdet}, \eqref{ij}, \eqref{matrix1}
\begin{equation}\nonumber
\frac{\partial ^2 \log\det (I-w\overline w^t)}{\partial w_{ij} \partial\overline  w_{ii}} = 0
\quad\quad \text{ if }
i \neq j.
\end{equation}

Suppose that $i<k$ and $k=l$. Then \eqref{matrix1} 
at $w_0 = \text{diag} (w_{11},\ldots, w_{pp})$ is given by 
\begin{equation}\label{matrix3}
\frac{\partial ^2\det \eqref{I-ww}}{\partial w_{ij} \partial \overline w_{kk}} 
= \det \left( \begin{array}{ccccc}
M_1& 0&0&0&0\\
0&-\overline w_{ij}&0&-\delta_{kj}&0\\
0&0&M_2&0&0\\
0&0&0&-w_{kk}&0\\
0&0&0&0&M_3\\
\end{array}
\right)
\end{equation}
where $M_1= \text{diag}\left(1-|w_{11}|^2,\ldots, 1-|w_{i-1,i-1}|^2\right)$, 
$M_2=\text{diag}\left( 1-|w_{i+1,i+1}|^2,\ldots, 1-|w_{k-1,k-1}|^2\right)$ and 
$M_3= \text{diag}\left(1-|w_{k+1,k+1}|^2,\ldots,  1-|w_{pp}|^2\right)$.
Hence by \eqref{matrix3}
\begin{equation}\nonumber
\frac{\partial ^2\log\det (I-w\overline w^t)}{\partial w_{ij}  \partial\overline w_{kk}}  = 0
\quad\quad \text{ if }
i\neq k \text{ and } i\neq j.
\end{equation}
 By the similar way one also gets
 \begin{equation}\nonumber
\frac{\partial ^2\det (I-w\overline w^t)}{\partial w_{ii} \partial\overline w_{kk}}  =
\frac{ \overline w_{ii} w_{kk} \Pi_{\sigma=1}^p 
\left(1-|w_{\sigma \sigma}|^2\right)}{ \left(1-|w_{ii}|^2\right)\left(1-|w_{kk}|^2\right)}
 \quad\quad\text{ if }
i\neq k
\end{equation}
and 
\begin{equation}\nonumber
\frac{\partial ^2\det \left(I-w\overline w^t\right) }{\partial w_{ii} \partial\overline w_{ii}} 
= -\frac{  \Pi_{\sigma=1}^p \left(1-|w_{\sigma \sigma}|^2\right)}{ 1-|w_{ii}|^2}.
\end{equation}
Hence we obtain
\begin{equation}\nonumber
\frac{\partial^2\log \det\left(I-w\overline w^t\right)}{\partial w_{ii} \partial \overline w_{kk}}
=\frac{ \overline w_{ii} w_{kk} }{ \left(1-|w_{ii}|^2\right)\left(1-|w_{kk}|^2\right)}
-\frac{  \overline w_{ii}}{ \left(1-|w_{ii}|^2\right)}
\frac{ w_{kk}}{ \left(1-|w_{kk}|^2\right)}=0 \quad
\text{ if } i\neq k
\end{equation}
and \begin{equation}\nonumber
\frac{\partial^2\log \det(I-w\overline w^t)}{\partial w_{ii} \partial\overline w_{ii}}
= -\frac{1}{ 1-|w_{ii}|^2}
-\frac{  \overline w_{ii}}{ 1-|w_{ii}|^2}\frac{ w_{ii}}{1-|w_{ii}|^2} 
=  -\frac{1}{ \left(1-|w_{ii}|^2\right)^2}.
\end{equation}

Suppose that $i=k$ and $j=l\leq p$ but $i\neq j$.
Then
\begin{equation}\nonumber
 \frac{\partial ^2}{\partial w_{ij} \partial\overline w_{ij}} 
\det (I-w\overline w^t) 
 = \det \left(\begin{array}{ccc}
 N_1&N_2&0\\
 \overline N_2^t& -1& N_3\\
 0& \overline N_3^t& N_4
 \end{array}
 \right)= - \frac{ \Pi_{\sigma=1}^p \left(1-|w_{\sigma \sigma}|^2\right)}
{ \left(1-|w_{ii}|^2\right)\left(1-|w_{jj}|^2\right)}
\end{equation}
where $N_1= \text{diag}(1-|w_{11}|^2, \ldots, 1-|w_{i-1,i-1}|^2)$,
 $N_2 = (-w_{1j},\ldots, -w_{i-1,j})^t$,
 $N_3= (-\overline w_{i+1,j}, \ldots, -\overline w_{p,j})$
 and $N_4= \text{diag}(1-|w_{i+1,i+1}|^2, \ldots, 1-|w_{pp}|^2)$.
Hence 
\begin{equation}\nonumber
\frac{\partial ^2 \log \det \left(I-w\overline w^t\right) }
{\partial w_{ij} \partial\overline w_{ij}} 
 = \frac{ -1}{ \left(1-|w_{ii}|^2\right)\left(1-|w_{jj}|^2\right)} 
\quad \text{ if } i\neq j \leq p.
\end{equation}

Suppose that $i=k$ and $j=l\geq p+1$ but $i\neq j$. Then 
\begin{equation}\label{i}
 \frac{\partial ^2\log\det (I-w\overline w^t) }
{\partial w_{ij} \partial\overline w_{ij}} 
= - \frac{1}{ 1-|w_{ii}|^2} \quad \text{ if } i\leq p < j\leq q.
\end{equation}

Suppose that $i=k$, $j\neq l$ but $j\neq i$, $l \neq i$.
If $j<i$ and $l<i$, we have 
\begin{equation}\label{i=k1}
\begin{aligned}
\frac{\partial^2 \log\det ( I - w\overline w^t)}
{\partial w_{ij} \partial \overline w_{il}} 
&= \frac{1}{\det(I-w\overline w^t)} \det \left( \begin{array}{ccc}
L_1& L_2& 0 \\
L_3&0&0\\
0&0& L_4
\end{array}\right)=0
\end{aligned}
\end{equation}
with $N_1 = \text{diag}(1-|w_{11}|^2, \ldots, 1-|w_{i-1, i-1}|^2)$, 
$L_2 = (0,\ldots, 0 ,-w_{ll}, 0, \ldots, 0)^t$ 
where $-w_{ll}$ is located in the $l$-th component, 
$L_3 = (0,\ldots, 0, -\overline w_{jj}, 0,\ldots, 0)$ 
where $-\overline w_{jj}$ is located in the $j$-th component 
and $L_4 = \text{diag}(1-|w_{i+1,i+1}|^2, \ldots, 1-|w_{pp}|^2 )$.
This holds since $i$-th row and $j$-th row are dependent.

If $i<j \neq l$ or $l<i<j \leq p$ or $j<i<l\leq p$, by the similar way 
we obtain 
\begin{equation}\nonumber
\frac{\partial^2 \log\det ( I - w\overline w^t) }{\partial w_{ij} \partial \overline w_{il}}
=0.
\end{equation}

Hence for $X= \sum_{i,j}X_{ij}\frac{\partial}{\partial w_{ij}}$
\begin{equation}\label{plurisubharmonicity}
\begin{aligned}
&- \sum_{\substack{1\leq i,k \leq p\\1 \leq j,l \leq q}} X_{ij} 
\frac{ \partial^2 \log \det( I-w \overline w^t )}
{\partial w_{ij}\partial \overline w_{kl}} \overline X_{kl}\\
&= \sum_{1\leq i,j \leq p} \frac{ |X_{ij}|^2}
{ \left(1-|w_{ii}|^2\right)\left(1-|w_{jj}|^2\right)}
+ \sum_{\substack{1\leq i\leq p \\p+1\leq j\leq q}} 
 \frac{ |X_{ij} |^2}{ 1-|w_{ii}|^2}
\geq \sum_{\substack{1\leq i\leq p \\1\leq j\leq q}} |X_{ij}|^2
\end{aligned}
\end{equation}
and hence $\log \psi_\Omega$ is psh.

{\bf Type II and III:}
Since $\psi_{\Omega^{II}_n}$, $\psi_{\Omega^{III}_n}$ are the restriction of 
$\psi_{\Omega^I_{n,n}}$, $\frac{1}{2}\psi_{\Omega^I_{n,n}}$ on $\Omega^{II}_n$, $\Omega^{III}_n$ respectively,
$ \log \psi_{\Omega^{II}_{n}}$, $ \log \psi_{\Omega^{III}_{n}}$  are psh.

{\bf Type IV: }
Since 
$
K_{\Omega_n^{IV}}(z,\overline z) = c_1\left(1-2 zz^* + \big| zz^t \big|^2 \right)^{-c_2}
$
for some constant $c_1, c_2>0$,
we have 
\begin{equation}\label{ddlogpsi4}
\begin{aligned}
\partial \overline \partial \log \psi_{\Omega_n^{IV}}(z,w) 
%&= \partial \overline \partial \left( \log K_{\Omega_n^{IV}}(z,z) 
%+ \log K_{\Omega_n^{IV}}(w,w)
%-  \log K_{\Omega_n^{IV}}(z,w) 
%-  \log K_{\Omega_n^{IV}}(w,z)\right)\\
&=c_2 \left( \begin{array}{cc}
2I&-2I \\
-2I&-\partial_w \overline \partial_w \log\left(1-2 w\overline w^t + \big| ww^t \big|^2\right)
\end{array}\right)
\end{aligned}  
\end{equation}
at $(0,w)$.
 A maximal polydisc in $\Omega_n^{IV}$ is given by 
\begin{equation*}
\Delta^2 :=\left\{(w_1,w_2,0,\ldots,0):
w_1=\lambda(\zeta_1 + \zeta_2), w_2=i\lambda(\zeta_1-\zeta_2), |\zeta_1|<1, \, |\zeta_2|<1\right\},
\end{equation*}
with $\lambda^2 = \frac{i}{4}$.
For the case of type $IV$, we denote $\sqrt{-1}$ by $i$.
Note that on $\Delta^2$, 
\begin{equation}\label{note4}
\begin{aligned}
&ww^t = 4\lambda^2\zeta_1\zeta_2= i\zeta_1\zeta_2 ,\quad
w\overline w^t = \frac{1}{2}\left(|\zeta_1|^2+|\zeta_2|^2\right), \\
&1-2 w\overline w^t + \left| ww^t \right|^2=\left(1-|\zeta_1|^2\right)\left(1-|\zeta_2|^2\right),\\
& 4 w_1 w_2 = -\left(\zeta_1^2-\zeta_2^2\right),\\
&4 w_1\overline w_2 = 4\lambda\left(\zeta_1 + \zeta_2\right)
\overline{i\lambda\left(\zeta_1-\zeta_2\right)} 
= -i \left(\zeta_1+\zeta_2\right)\left(\overline \zeta_1 - \overline \zeta_2\right).
\end{aligned}
\end{equation}
Since $
\frac{\partial^2}{\partial w_j\partial\overline w_k}\left(1-2 w\overline w^t + \left| ww^t \right|^2\right)
= -2\delta_{jk} + 4w_j\overline w_k,
$
 we obtain on $\Delta^2$
\begin{equation}\nonumber
\partial \overline\partial \left(1-2 w\overline w^t + \big| ww^t \big|^2\right) = \left(
\begin{array}{cc|ccc}
-2+4|w_1|^2 & 4w_1\overline w_2& &0  \\
4w_2\overline w_1 & -2 + 4|w_2|^2 & &\\\hline
&&-2&&0\\
&0&&\ddots&\\
&&0&&-2
\end{array}\right).
\end{equation}
Since $
\frac{\partial}{\partial w_j} \left(1-2 w\overline w^t + \left| ww^t \right|^2\right)
= -2\left(\overline w_j - w_j\overline{(ww^t)}\right)
$ on $\Delta^2$,
$\partial S^{IV}_n \overline\partial S^{IV}_n$ is 
\begin{equation}\nonumber
\left(\begin{array}{cc|c}
4\left|\overline w_1-w_1\overline{(ww^t)}\right|^2
&4\left(\overline w_1-w_1\overline{(ww^t)}\right)\left(w_2- \overline w_2(ww^t)\right)&0  \\
4\left(\overline w_2-w_2\overline{(ww^t)}\right)\left(w_1- \overline w_1(ww^t)\right)
 & 4\left|\overline w_2-w_2\overline{(ww^t)}\right|^2&  \\\hline
&0&\quad 0\quad
\end{array}\right). 
\end{equation}
Hence by a straightforward calculation we obtain
\begin{equation}\nonumber
\begin{aligned}
&- S^{IV}_n \frac{\partial^2 S^{IV}_n }{\partial w_1 \partial\overline w_2} 
 + \frac{\partial  S^{IV}_n }{\partial w_1}\frac{\partial  S^{IV}_n }{\partial\overline w_2} 
= i\left(|\zeta_1|^2 - |\zeta_2|^2\right)\left(2-|\zeta_1|^2-|\zeta_2|^2\right),\\
&- S^{IV}_n \frac{\partial^2 S^{IV}_n }{\partial w_1 \partial\overline w_1} 
 + \frac{\partial  S^{IV}_n }{\partial w_1}\frac{\partial  S^{IV}_n }{\partial \overline w_1} 
= 2\left(1-|\zeta_1|^2\right)\left(1-|\zeta_2|^2\right)+\left( |\zeta_1|^2- |\zeta_2|^2\right)^2 
\end{aligned}
\end{equation}
and
\begin{equation}\nonumber
- S^{IV}_n \frac{\partial^2 S^{IV}_n }{\partial w_2\partial\overline  w_2} 
 + \frac{\partial  S^{IV}_n }{\partial w_2}\frac{\partial  S^{IV}_n }{\partial\overline w_2} 
 = 2\left(1-|\zeta_1|^2\right)\left(1-|\zeta_2|^2\right)+\left( |\zeta_1|^2- |\zeta_2|^2\right)^2.
\end{equation}
To show that \eqref{ddlogpsi4} is semi-positive definite, we only need to prove
that $-\partial\overline\partial \log S^{IV}_n - 2I$ is positive semi-definite, 
i.e.
\begin{equation}\nonumber
 \left(
\begin{array}{c|ccc}
 M& &0&\\\hline
&\frac{2}{S^{IV}_n}&&0\\
0&&\ddots&\\
&0&&\frac{2}{S^{IV}_n}
\end{array}\right)-2I
\end{equation}
is semi-positive definite where $M$ is given by
{
\begin{equation}\label{ddbar type4}
\left( \begin{array}{cc}
\frac{1}{(S^{IV}_n)^2}\left( 
2(1-|\zeta_1|^2)(1-|\zeta_2|^2)+( |\zeta_1|^2- |\zeta_2|^2)^2
\right) 
&  \frac{i}{(S^{IV}_n)^2}\left(|\zeta_1|^2 - |\zeta_2|^2\right)\left(2-|\zeta_1|^2-|\zeta_2|^2\right)\\
\frac{-i}{(S^{IV}_n)^2}\left(|\zeta_1|^2 - |\zeta_2|^2\right)\left(2-|\zeta_1|^2-|\zeta_2|^2\right)
& \frac{1}{(S^{IV}_n)^2}\left(
2\left(1-|\zeta_1|^2\right)\left(1-|\zeta_2|^2\right)+\left( |\zeta_1|^2- |\zeta_2|^2\right)^2 \right)
\end{array}\right).
\end{equation}
}
\begin{lemma}
$M-\frac{2}{S^{IV}_n} I\geq 0$.
\end{lemma}
\begin{proof}
We only need to check 
{\begin{equation}\label{matrix2}
\left( \begin{array}{cc}
2\left(1-|\zeta_1|^2\right)\left(1-|\zeta_2|^2\right)
+\left( |\zeta_1|^2- |\zeta_2|^2\right)^2 -2S^{IV}_n
& i\left(|\zeta_1|^2 - |\zeta_2|^2\right)\left(2-|\zeta_1|^2-|\zeta_2|^2\right)\\
-i\left(|\zeta_1|^2 - |\zeta_2|^2\right)\left(2-|\zeta_1|^2-|\zeta_2|^2\right)
& 2\left(1-|\zeta_1|^2\right)\left(1-|\zeta_2|^2\right)
+\left( |\zeta_1|^2- |\zeta_2|^2\right)^2 -2S^{IV}_n
\end{array}\right)
\end{equation}} is positive semi-definite.
Since \eqref{matrix2} is equal to 
{\begin{equation}\nonumber
\left( \begin{array}{cc}
\left( |\zeta_1|^2- |\zeta_2|^2\right)^2 
& i\left(|\zeta_1|^2 - |\zeta_2|^2\right)\left(2-|\zeta_1|^2-|\zeta_2|^2\right)\\
-i\left(|\zeta_1|^2 - |\zeta_2|^2\right)\left(2-|\zeta_1|^2-|\zeta_2|^2\right)
& \left( |\zeta_1|^2- |\zeta_2|^2\right)^2
\end{array}\right)
\end{equation}} 
and the determinant of it is 
\begin{equation}\nonumber
(|\zeta_1|^2-|\zeta_2|^2)^4 - (|\zeta_1|^2-|\zeta_2|^2)^2 (2-|\zeta_1|^2-|\zeta_2|^2)^2
=4(|\zeta_1|^2-|\zeta_2|^2)^2 (1-|\zeta_1|^2)(1-|\zeta_2|^2)\geq 0,
\end{equation}
we obtain the lemma.
\end{proof}
Since $\frac{2}{S^{IV}_n} I_{n-2} - 2I_{n-2}$ is positive definite and $M-2I>M-\frac{2}{S^{IV}_n} I>0$, as a result $\partial \overline \partial \log \psi_{\Omega_n^{IV}}(z,w)$ is 
positive semi-definite.

\bigskip
{\bf Type V: }
For $ Z=\left( \begin{array}{ccc}
0& z_2&\tilde z_1\\
\tilde z_2 & 0&0\\
z_1&0&0
\end{array}\right) \in H_3(\mathbb O_\mathbb C)$ for 
$z_1=(z_{10}, z_{11}, \ldots, z_{17})$ and 
$z_2=(z_{20}, z_{21}, \ldots, z_{27})$, we have 
$$S^{V}(Z,\overline Z) = 1-(Z|Z) + (Z^\#|Z^\#) 
= 1-2\sum_{i=1}^2\sum_{j=0}^7 |z_{ij}|^2
+ \sum_{i=1}^2 \left|\sum_{j=0}^7 z_{ij}^2 \right|^2 + 2(z_1z_2 | z_1z_2).$$
One may notice that calculating $\partial\overline\partial \log \psi_{\Omega^V_{16}}(z,w)$ where $z=0$ and $w$ belongs to a maximal polydisc of $\Omega^V_{16}$ is basically the same with calculating $\partial\overline\partial \log \psi_{\Omega^{IV}_{8}}(z,w)$ where $z=0$ and $w$ belongs to a maximal polydisc of $\Omega^{IV}_8$.

\bigskip

\bigskip
{\bf Type VI: } For $Z\in H_3(\mathbb O_\mathbb C)$, we have 
$S^{VI}(Z,\overline Z) = 1-(Z|Z) + (Z^\#|Z^\#) -|\det Z|^2$
for $Z\in H_3(\mathbb O_\mathbb C)$ and one may notice 
that calculating $\partial\overline\partial \log \psi_{\Omega_{27}^{VI}}$ where $z=0$ and $w$ belongs to totally geodesic $\Delta^3$ of $\Omega_{27}^{VI}$ is similar to calculating $\partial \overline\partial \log \psi_{\Omega_{3,3}^I}(0,w)$.
\end{proof}

\section{When $\rho$ is reductive}
Let $M_1$ and $M_2$ be Riemannian manifolds with Riemannian metrics $ds^2_{M_1} = \sum g_{\alpha\beta} dx^\alpha dx^\beta$, 
$ds^2_{M_2} = \sum h_{ij} dy^i dy^j$ respectively.
Let $f\colon M_1\rightarrow M_2$ be a map. 
The energy $E(f)$ of $f$ is defined by
\begin{equation}\nonumber
\frac{1}{2}\int_{M_1} \text{trace}_{ds^2_{M_1}}(f^* ds^2_{M_2}) 
= \frac{1}{2}\int_{M_1}  \sum g^{\alpha\beta}h_{ij}\circ f \frac{\partial f^i}{\partial x^\alpha}\frac{\partial f^j}{\partial x^\beta}.
\end{equation}

The Euler-Lagrange equation for the energy functional $E$ is $\Delta f:=\text{trace} \nabla df =0$, and this can be expressed in a local coordinate by
\begin{equation}\nonumber
\Delta_{M_1}  f^i + \sum \Gamma^i_{jk}\circ f \frac{\partial f^j}{\partial x^\alpha} 
\frac{\partial f^k}{\partial x^\beta}g^{\alpha\beta} =0
\end{equation}
for all $i$, where $\Delta_{M_1} $ is the Laplace-Beltrami operator of ${M_1}$ and $\Gamma^i_{jk}$ is the Christoffel symbol of ${M_2}$.
The map $f$ is said to be {\it harmonic} if $f$ satisfies the Euler-Lagrange equation for the energy functional.

If $M_1$ and $M_2$ are K\"ahler, the map $f \colon M_1\rightarrow M_2$ 
is said to be {\it pluriharmonic} if $\nabla_{1,0}\overline\partial f \equiv 0$.
In local coordinates, pluriharmonic map $f$ satisfies
\begin{equation}\nonumber
\frac{\partial^2 f^i}{\partial \zeta_\alpha  \partial\overline \zeta_\beta} 
+ \sum_{j,k} \Gamma^i_{jk}\circ f 
\frac{\partial f^j}{\partial \zeta^\alpha} 
\frac{\partial f^k}{\partial \overline \zeta^\beta} =0.
\end{equation}
Remark that if $f$ is pluriharmonic, then $f$ is also harmonic and the pluriharmonicity is preserved
by the isometries.

\begin{proof}[Proof of Theorem \ref{main theorem1}]
By Theorem \ref{Corlette}, there exists a $\rho$-equivariant harmonic map from $\widetilde M$ to $\Omega$ and hence there exists a harmonic section of $E$.
By Siu \cite{Siu_1980} and Sampson \cite{Sampson_1986}, 
the given harmonic 
section $s$ is pluriharmonic. For a local coordinate $U$ on $M$ such that 
$\pi^{-1}(U)\cong U\times \Omega$, we may express 
$s|_U(\xi) = (\xi, h(\xi))$ for some $h\colon U\rightarrow \Omega$.
Define a map $\psi$ by
\begin{equation}\nonumber
\psi(\xi,w) = \log\psi_\Omega(h(\xi),w).
\end{equation}

Since $\log \psi_\Omega$ is invariant with respect to the diagonal action of 
$\text{Aut}(\Omega)$ and the automorphisms are isometries with respect to the Bergman metric, 
we may assume that $h(\xi)=0$ and $w$ is contained in a maximal totally geodesic polydisc of $\Omega$.
Consider the pluriharmonic map $(\xi,w)\mapsto\left(h(\xi),w\right)$ from $U\times\Omega$ to 
$\Omega\times\Omega$, with respect to the product metrics $g|_U\otimes g_{sta}$
and $g_\Omega\otimes g_{sta}$ with the standard Euclidean metric $g_{sta}$
of the ambient Euclidean space of $\Omega$, 
the Bergman metric $g_\Omega$ of $\Omega$ and the K\"ahler metric $g$ of $M$. 
Since $\Gamma^k_{ij}(0)=0$ where $\Gamma_{ij}^k$ is 
the Christoffel symbol of $g_\Omega$, pluriharmonicity
of $h$ implies 
\begin{equation}\label{harmonic}
\frac{\partial^2 h^k}{\partial \xi_i\partial \overline \xi_j}(\xi)=0 \quad \text{ for any } i,j,k.
\end{equation}
By the chain rule and the equality
\begin{equation}\nonumber
\begin{aligned}
\frac{\partial^2}{\partial z_k
\partial w_j }\log \psi_\Omega (z,w) 
&= \frac{\partial^2}{ \partial z_k \partial  w_j} \log \frac{K(z,z)K(w,w)}{K(z,w)K(w,z)}=0,
\end{aligned}
\end{equation}
we have 
\begin{equation}\label{zetaw}
\begin{aligned}
\frac{\partial^2}{\partial \xi_i \partial \overline w_j} \log \psi_\Omega(h(\xi), w) 
&=  \sum_k\frac{\partial^2 \log\psi_\Omega}{\partial z_k\partial \overline w_j}(0,w)\frac{\partial h^k}{\partial\xi_i}(\xi) 
+\sum_k \frac{\partial^2 \log\psi_\Omega}{\partial \overline  z_k \partial\overline w_j}(0,w) \frac{\partial \overline h^k}{\partial \xi_i}\\
&=  \sum_k\frac{\partial^2 \log\psi_\Omega}{\partial z_k\partial \overline w_j}(0,w)\frac{\partial h^k}{\partial\xi_i}(\xi),
\end{aligned}
\end{equation}

\begin{equation}\label{ww}
\begin{aligned}
\frac{\partial^2}{\partial w_i \partial \overline w_j} \log \psi_\Omega(h(\xi), w) 
=  \frac{\partial^2 \log\psi_\Omega}{\partial w_i\partial \overline w_j}(0,w),
\end{aligned}
\end{equation}
and by \eqref{harmonic}
\begin{equation}\label{general formula}
\begin{aligned}
\frac{\partial^2}{\partial \xi_i\partial\overline \xi_j}  \log\psi_\Omega(h(\xi),w)
=&\sum_{k,l}\left( \frac{\partial^2 \log\psi_\Omega}{\partial z_k\partial z_l}
\frac{\partial h^k}{\partial \xi_i} 
\frac{\partial h^l}{\partial \overline \xi_j} 
+\frac{\partial^2 \log\psi_\Omega}{\partial \overline z_k\partial z_l}
\frac{\partial \overline h^k}{\partial \xi_i} 
\frac{\partial h^l}{\partial \overline \xi_j} \right.\\
&\quad\quad\quad\left. +\frac{\partial^2 \log\psi_\Omega}{\partial z_k\partial \overline  z_l}
\frac{\partial h^k}{\partial \xi_i} 
\frac{\partial \overline  h^l}{\partial \overline \xi_j} 
+\frac{\partial^2 \log\psi_\Omega}{\partial \overline z_k\partial \overline z_l}
\frac{\partial \overline h^k}{\partial \xi_i} 
\frac{\partial \overline  h^l}{\partial \overline \xi_j}\right).
\end{aligned}
\end{equation}

{\bf Type I, $\Omega_{p,q}$: }
At $(0,w_0)$ with $w_0 = \text{ diag}(w_{11}, \ldots, w_{pp})$, $|w_{jj}|<1$ for all $j$, one obtains
\begin{equation}\nonumber
\begin{aligned}
&\frac{\partial}{\partial z_{ij}} \det(I - z\overline w^t)
= - \overline w_{ij},\\
&\frac{\partial^2}{\partial z_{ij}\partial z_{kl}} \det(I-z\overline w^t) = 
\det \left(\begin{array}{cc}
\overline w_{ij} & \overline w_{kj}\\
\overline w_{il}& \overline w_{kl}
\end{array}\right),\\
\end{aligned}
\end{equation}
and hence
\begin{equation}\label{ijkltypeI}
\begin{aligned}
\frac{\partial^2}{\partial z_{ij}\partial z_{kl} }\log \psi_{\Omega_{p,q}^I} (z,w) 
&= -c_2\frac{\partial^2}{ \partial z_{ij} \partial  z_{kl}} \log 
\frac{\det(I - z\overline z^t) \det (I - w\overline w^t)}{\det(I-z\overline w^t)\det(I- w\overline z^t)}\\
%&=-c_2\frac{\partial^2}{ \partial z_{ij} \partial  z_{kl}} \left( 
%\log \det (I - z\overline z^t)-\log\det(I-z\overline w^t)
%\right)
%\\
&= c_2\frac{\partial^2}{ \partial z_{ij} \partial  z_{kl}} 
\log \det (I - z\overline w^t)
=-c_2 \overline w_{il} \overline w_{kj}.
\end{aligned}
\end{equation}
Moreover we have
\begin{equation}\label{nobars}
\frac{\partial^2}{\partial z_{ij}\partial\overline w_{kl}} \log \det(I-z\overline w^t) = -\delta_{ik}\delta_{jl}.
\end{equation}
By \eqref{general formula} and \eqref{ijkltypeI}
\begin{equation}\label{zetazeta}
\begin{aligned}
&\frac{1}{c_2}\frac{\partial^2}{\partial \xi_\alpha \partial\overline \xi_\beta}  \log\psi_{\Omega_{p,q}^I}(h(\zeta),w)\\
&= \sum_{i,k=1}^p\left(
- \overline w_{ii}\overline w_{kk}
\frac{\partial h^{ik}}{\partial \xi_\alpha} 
\frac{\partial h^{ki}}{\partial \overline \xi_\beta} 
- 
w_{ii}w_{kk}
\frac{\partial \overline h^{ik}}{\partial \xi_\alpha} 
\frac{\partial \overline  h^{ki}}{\partial \overline \xi_\beta}
\right)
+\sum_{i=1}^p\sum_{j=1}^q\left(
\frac{\partial \overline h^{ij}}{\partial \xi_\alpha} 
\frac{\partial h^{ij}}{\partial \overline \xi_\beta} 
+\frac{\partial h^{ij}}{\partial \xi_\alpha} 
\frac{\partial \overline  h^{ij}}{\partial \overline \xi_\beta}
\right).
\end{aligned}
\end{equation}
As a consequence of \eqref{zetaw},
\eqref{ww}, \eqref{zetazeta} and  \eqref{nobars}
for $Z=\sum_{\alpha} X_{\alpha}\frac{\partial }{\partial \xi_{\alpha}}+ \sum_{i,j} Y_{ij} \frac{\partial}{\partial z_{ij}}$ with 
$X=\sum_{\alpha} X_{\alpha}\frac{\partial }{\partial \xi_{\alpha}}$ we obtain
\begin{equation}\nonumber
\begin{aligned}
&\frac{1}{c_2}\partial\bar\partial \log \psi_{\Omega_{p,q}}(h(\zeta), w)(Z, \overline Z)\\
&=\sum_{i=1}^p\sum_{j=1}^q\left(
 |X \overline h^{ij}|^2
+|X h^{ij}|^2
- 2\text{Re}\left((Xh^{ij})\overline Y_{ij}\right)\right)
-\sum_{i,j=1}^p 2\text{Re}\left( w_{ii}w_{jj}
(X \overline h^{ij})
(\overline X\overline  h^{ji})\right)
+\partial_w\bar\partial_w \log\psi_{\Omega^I_{p,q}}(0,w)(Y,\overline Y)\\
&\geq \sum_{i=1}^p\sum_{j=1}^q\left(
 |X \overline h^{ij}|^2
+|X h^{ij}|^2
- 2\text{Re}\left((Xh^{ij})\overline Y_{ij}\right)
+
\frac{|Y_{ij}|^2}{1-|w_{ii}|^2}\right)
- \sum_{i,j=1}^p2\text{Re}\left( w_{ii}w_{jj}
(X \overline h^{ij})
(\overline X\overline  h^{ji})\right)  \\
&
= \sum_{i,j=1}^p\left(
\left| w_{ii} (X\overline h^{ij} )- \overline w_{jj} (Xh^{ji})\right|^2
+(1-|w_{ii}|^2) |X\overline h^{ij}|^2 \right)
+ \sum_{i=1}^p\sum_{j=p+1}^q 
\left( |X\overline h^{ij}|^2 + |w_{ii}|^2 |Xh^{ij}|^2\right)\\
&\quad
+ \sum_{i=1}^p\sum_{j=1}^q\left|
\sqrt{1-|w_{ii}|^2 } (Xh^{ij})
- \frac{Y_{ij}}{\sqrt{1-|w_{ii}|^2}}\right|^2
\geq 0
\end{aligned}
\end{equation}
by \eqref{plurisubharmonicity}.
Hence $\partial\bar\partial \log \psi_{\Omega_{p,q}^I}(h(\xi), w)$ is 
positive semi-definite.

{\bf Type II, III:}
Since $\psi_{\Omega^{II}_n}$ and $\psi_{\Omega^{III}_n}$ are the restriction of 
$\psi_{\Omega^I_{n,n}}$ and $\frac{1}{2}\psi_{\Omega^I_{n,n}}$ on $\Omega^{II}_n$ and $\Omega^{III}_n$ respectively,
$\partial\bar\partial \log \psi_{\Omega_{n}^{II}}(h(\xi), w)$, $\partial\bar\partial \log \psi_{\Omega_{n}^{III}}(h(\xi), w)$ are 
positive semi-definite.

{\bf Type IV:}
At $(0,w)$, we have
\begin{equation}\nonumber
\begin{aligned}
&\frac{\partial^2}{\partial z_i \partial z_j} \log \psi_{\Omega_{n}^{IV}}(z,w)
=c_2\frac{\partial^2}{\partial z_i \partial z_j} \log ( 1-2zw^* + (zz^t)(\overline{ww^t}))
=2\delta_{ij} (\overline{ww^t})-4\overline w_i \overline w_j. 
\end{aligned}
\end{equation}
Therefore we have 
{\small
\begin{equation}\nonumber
\frac{\partial^2}{\partial z_1 \partial z_1} \log \psi_{\Omega_{n}^{IV}}
= -2c_2(\overline w_1^2 - \overline w_2^2),\quad
\frac{\partial^2}{\partial z_2 \partial z_2} \log \psi_{\Omega_{n}^{IV}}
= 2c_2(\overline w_1^2 - \overline w_2^2), \quad
\frac{\partial^2}{\partial z_1 \partial z_2} \log \psi_{\Omega_{n}^{IV}}
=-4c_2\overline w_1\overline w_2
\end{equation}}
and hence by \eqref{general formula}
\begin{equation}\nonumber
\begin{aligned}
&\frac{1}{2c_2}\frac{\partial^2}{\partial \xi_\alpha \partial\overline \xi_\beta}  \log\psi_{\Omega_{n}^{IV}}(h(\xi),w)
=
\sum_j \left(
\frac{\partial \overline h^{j}}{\partial \xi_\alpha} 
\frac{\partial h^{j}}{\partial \overline \xi_\beta} 
+
\frac{\partial h^{j}}{\partial \xi_\alpha} 
\frac{\partial \overline  h^{j}}{\partial \overline \xi_\beta} 
\right)+ \\
& \quad\quad2\text{Re}\left( -(\overline w_1^2-\overline w_2^2)\frac{\partial h^{1}}{\partial \xi_\alpha} 
\frac{\partial h^{1}}{\partial \overline \xi_\beta} 
+(\overline w_1^2-\overline w_2^2)\frac{\partial  h^{2}}{\partial \xi_\alpha} 
\frac{\partial   h^{2}}{\partial \overline \xi_\beta}
-2\overline w_1\overline w_2 
\frac{\partial  h^{1}}{\partial \xi_\alpha} 
\frac{\partial   h^{2}}{\partial \overline \xi_\beta} 
-2\overline w_1\overline w_2 
\frac{\partial  h^{2}}{\partial \xi_\alpha} 
\frac{\partial   h^{1}}{\partial \overline \xi_\beta} \right)
\end{aligned}
\end{equation}
This implies for $X = \sum_{\alpha=1}^n X_\alpha \frac{\partial }{\partial \xi_\alpha}$
\begin{equation}\nonumber
\begin{aligned}
&\frac{1}{2c_2}\sum_{\alpha,\beta}
X_\alpha \frac{\partial^2}{\partial \xi_\alpha \partial\overline \xi_\beta}  \log\psi_{\Omega_{n}^{IV}}(h(\xi),w)\overline X_\beta\\
&= \sum_{j=1}^n \left( |X \overline h^{j}|^2
+|X h^{i}|^2\right)
+ 2\text{Re} \left(
-(\overline w_1^2 - \overline w_2^2) (Xh^1)(\overline Xh^1)
+(\overline w_1^2 - \overline w_2^2) (Xh^2) (\overline Xh^2)\right.\\
&\quad \quad \quad \quad \quad \quad \quad \quad \quad \quad \quad \quad \quad \quad \quad \quad \quad 
\left. -2\overline w_1 \overline w_2 (Xh^1)(\overline X h^2) -2\overline w_1\overline w_2 (Xh^2) (\overline X h^1)\right).
\end{aligned}
\end{equation}
For $\zeta_1$, $\zeta_2$ satisfying $w_1 = \lambda(\zeta_1 + \zeta_2)$ and $w_2 = i\lambda(\zeta_1 - \zeta_2)$ and for 
$X=\sum X_\alpha\frac{\partial}{\partial \xi_\alpha}$, we have 
\begin{equation}\label{zetazeta4}
\begin{aligned}
&\frac{1}{2c_2}\partial_\xi \bar\partial_\xi \log\psi_{\Omega_{n}^{IV}}(h(\xi), w) (X,\overline X) \\
&=  \sum_{j=1}^n \left( |X \overline h^{j}|^2
+|X h^{j}|^2\right)
+ 2\text{Re} \left(
\frac{i}{2}(\overline \zeta_1^2 + \overline \zeta_2^2) (Xh^1)(\overline Xh^1)
-\frac{i}{2}(\overline \zeta_1^2 + \overline \zeta_2^2) (Xh^2) (\overline Xh^2)\right.\\
&\left. 
\quad\quad\quad\quad\quad\quad
\quad\quad\quad\quad\quad\quad\quad\quad\quad\quad\quad\quad
+ \frac{1}{2}(\overline \zeta_1^2 - \overline \zeta_2^2) (Xh^1)(\overline X h^2) 
+\frac{1}{2}(\overline \zeta_1^2 - \overline \zeta_2^2)(Xh^2) (\overline X h^1)\right)\\
&=   \sum_{j=3}^n \left( |X \overline h^{j}|^2
+|X h^{j}|^2\right)
+ \frac{1}{2}\left(
|Xh^1 - iXh^2|^2 
+ |\overline Xh^1 - i\overline X h^2 |^2
+ | Xh^1 + iXh^2|^2
+ |\overline Xh^1 + i \overline Xh^2|^2 
\right)
\\
&\quad
+ 2\text{Re} \left(
\frac{\overline \zeta_1^2}{2}
i(Xh^1 - i Xh^2)(\overline X h^1 - i\overline X h^2)
+ \frac{\overline \zeta_2^2}{2}
i (Xh^1 + iXh^2)(\overline Xh^1 + i \overline Xh^2)
\right)\\
&=  \frac{1}{2}\left[ (1-|\zeta_1|^2)
\left(
|Xh^1 - iXh^2|^2 
+ |\overline Xh^1 - i\overline X h^2 |^2\right)
+(1-|\zeta|^2)\left( | Xh^1 + iXh^2|^2
+ |\overline Xh^1 + i \overline Xh^2|^2 \right)
\right]
\\
&\quad+ \frac{|\zeta_1|^2}{2}\left|
(Xh^1 - iXh^2) + ( X\overline h^1 +i X\overline h^2)\right|^2
+ \frac{|\zeta_2|^2}{2}\left|
(Xh^1 + iXh^2) + ( X\overline h^1 -i X\overline h^2)\right|^2\\
&\quad +\sum_{j=3}^n \left( |X \overline h^{j}|^2
+|X h^{j}|^2\right).
\end{aligned}
\end{equation}
As a consequence of \eqref{zetaw}, 
\eqref{ww}, \eqref{zetazeta4}, \eqref{ddbar type4}, for 
$Z=\sum X_\alpha\frac{\partial}{\partial \xi_\alpha} + \sum Y_{k}\frac{\partial}{\partial w_{k}}$, one obtains
\begin{equation}\label{psh type4}
\begin{aligned}
&\frac{1}{2c_2}\partial\bar\partial \log \psi_{\Omega_n^{IV}}(h(\xi), w)(Z,Z)\\
&=  \frac{1}{2}\left[ (1-|\zeta_1|^2)
\left(
|Xh^1 - iXh^2|^2 
+ |\overline Xh^1 - i\overline X h^2 |^2\right)
+(1-|\zeta_2|^2)\left( | Xh^1 + iXh^2|^2
+ |\overline Xh^1 + i \overline Xh^2|^2 \right)
\right]\\
&\quad
+ \frac{|\zeta_1|^2}{2}\left|
(Xh^1 - iXh^2) + ( X\overline h^1 +i X\overline h^2)\right|^2
+ \frac{|\zeta_2|^2}{2}\left|
(Xh^1 + iXh^2) + ( X\overline h^1 -i X\overline h^2)\right|^2
\\
&\quad
-2\sum_{j=1}^n \text{Re} ( X  h^j) \overline Y_j + \frac{M_{11}}{2}|Y_1|^2 + \frac{M_{22}}{2}|Y_2|^2 + \frac{M_{12}}{2} Y_1\overline Y_2 + \frac{M_{21}}{2}Y_2\overline Y_1 \\
&\quad
+ \frac{\sum_{j=3}^n |Y_j|^2}{(1-|\zeta_1|^2)(1-|\zeta_2|^2)}
+\sum_{j=3}^n \left( |X \overline h^{j}|^2
+|X h^{j}|^2\right)
\end{aligned}
\end{equation}
\begin{equation}
\begin{aligned}
&=
\frac{1}{2}\left[ (1-|\zeta_1|^2)
\left(
|Xh^1 - iXh^2|^2 
+ |\overline Xh^1 - i\overline X h^2 |^2\right)
+(1-|\zeta_2|^2)\left( | Xh^1 + iXh^2|^2
+ |\overline Xh^1 + i \overline Xh^2|^2 \right)
\right]\\
&\quad
+ \frac{|\zeta_1|^2}{2}\left|
(Xh^1 - iXh^2) + ( X\overline h^1 +i X\overline h^2)\right|^2
+ \frac{|\zeta_2|^2}{2}\left|
(Xh^1 + iXh^2) + ( X\overline h^1 -i X\overline h^2)\right|^2
\\
&\quad
-2\sum_{j=1}^2 \text{Re}(Xh^j) \overline Y_j + \frac{M_{11}}{2}|Y_1|^2 + \frac{M_{22}}{2}|Y_2|^2 + \frac{M_{12}}{2} Y_1\overline Y_2 + \frac{M_{21}}{2}Y_2\overline Y_1 \\
&\quad
+ \sum_{j=3}^n \left| \frac{Y_j}{\sqrt{(1-|\zeta_1|^2)(1-|\zeta_2|^2)}}- \sqrt{(1-|\zeta_1|^2)(1-|\zeta_2|^2)}X_jh^j\right|^2\\
&\quad
+ \sum_{j=3}^n\left( |X\overline h^j|^2 + |Xh^j|^2 -{(1-|\zeta_1|^2)(1-|\zeta_2|^2)}|Xh^j|^2\right)
\end{aligned}
\end{equation}
where $M_{11}=M_{22}=\frac{2(1-|\zeta_1|^2)(1-|\zeta_2|^2) + (|\zeta_1|^2 - |\zeta_2|^2)^2}{(1-|\zeta_1|^2)^2(1-|\zeta_2|^2)^2}$
and $M_{12}=-M_{21}=i\frac{(2-|\zeta_1|^2-|\zeta_2|^2)  (|\zeta_1|^2 - |\zeta_2|^2)}{(1-|\zeta_1|^2)^2(1-|\zeta_2|^2)^2}$.
Since we have 
$$|Xh^1|^2 + |Xh^2|^2
= \frac{1}{2} \left( |Xh^1 - i Xh^2|^2 + |Xh^1 + iXh^2|^2\right),$$
and
\begin{equation}\nonumber
\begin{aligned}
&-2\sum_{j=1}^2 \text{Re}(Xh^j) \overline Y_j + \frac{M_{11}}{2}|Y_1|^2 + \frac{M_{22}}{2}|Y_2|^2 + \frac{M_{12}}{2} Y_1\overline Y_2 + \frac{M_{21}}{2}Y_2\overline Y_1 \\
&\geq \sum_{j=1}^2 \left( -2\text{Re}(Xh^j) \overline Y_j + \frac{|Y_j|^2}{S^{IV}_n}\right)
= \sum_{j=1}^2 \left(\left| \sqrt{S^{IV}_n} Xh^j - \frac{Y_j}{\sqrt{S^{IV}_n}}\right|^2
-S^{IV}_n|Xh^j|^2\right),
\end{aligned}
\end{equation}
\eqref{psh type4} is greater than or equal to $0$.

{\bf Type V, VI:} We omit the proof.
\smallskip

Hence  $\psi$ is psh.
Since $\log\psi_\Omega$ is invariant under the diagonal action of
$\text{Aut}(\Omega)$, $\psi$ is well defined on $M$.
By the construction of $\psi_\Omega$, it is an exhaustion function.
\end{proof}

\section{When $\rho$ is non-reductive}

Consider the heat equation according to Eells-Sampson given in \cite{Eells_Sampson_1964}:
\begin{equation}\label{heat equation}
\begin{aligned}
\frac{d}{dt}s(p,t) &=\Delta s(p,t),\\
s(p,0) &= s_0(p),
\end{aligned}
\end{equation}
for a map $s\colon M \times [0,\tau)\rightarrow E$ with $\tau>0$
where $s_0\colon   M\rightarrow E$ is a continuous section of $E$.
Note that since $\Omega$ is contractible, there exists such $s_0$ (see \cite{Steenrod} for example).
Denote $g_\Omega$ the Bergman metric on $\Omega$, $g_M$ the K\"ahler metric on $M$, and $g_E$ the induced metric from $g_\Omega$, $g_M$ on $E$. 
Let $d_\Omega$ and $d_E$ denote the distances induced from $g_\Omega$ and $g_E$ respectively.

\begin{lemma}[Hamilton \cite{Hamilton}, Diederich-Ohsawa \cite{Diederich_Ohsawa_1985}]
The family $\{s_t = s(\cdot, t)\}$ is well defined for any $t\in \mathbb R^+$ and $s_t$ 
is also a section for any $t$. 
Moreover the family $\{s_t = s(\cdot, t): t\in \mathbb R^+\}$ is uniformly 
equicontinuous on $ M$ 
with respect to $g_E$ and $g_M$.
\end{lemma}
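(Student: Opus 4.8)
The plan is to reduce the statement to the Eells--Sampson heat flow for $\rho$-equivariant maps into the Hadamard manifold $(\Omega,g_\Omega)$ and then to invoke the a priori estimates of Eells--Sampson and Hamilton, whose hypotheses are met because the fiber $\Omega$ has non-positive curvature and homogeneous (hence bounded) geometry while the base $M$ is compact. First I would set up the equivariant picture. Lifting to the universal cover, a section $s$ of $E = M\times_\rho\Omega$ corresponds to a $\rho$-equivariant map $\widetilde s\colon\widetilde M\to\Omega$, i.e. $\widetilde s(\gamma x)=\rho(\gamma)\widetilde s(x)$ for $\gamma\in\pi_1(M)$, and the heat equation \eqref{heat equation} for sections becomes the flow $\partial_t u=\tau(u)$, $u_0=\widetilde s_0$, for such equivariant maps, where $\tau(u)=\mathrm{trace}_{g_M}\nabla du$ is the tension field. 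Because $\rho(\pi_1(M))$ acts on $\Omega$ by Bergman isometries, the energy density $e(u_t)=\tfrac12|du_t|^2$ and every Bochner quantity below are $\pi_1(M)$-invariant, hence descend to functions on the compact manifold $M$, where the maximum principle is available.

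Next I would treat the section-preserving property. I equip $E$ with the metric induced by the flat connection of $M\times_\rho\Omega$ and by $g_\Omega$; for this metric $\pi\colon E\to M$ is a Riemannian submersion whose fibers are totally geodesic and whose horizontal distribution, being the flat connection, is integrable, so that both O'Neill tensors vanish and $\pi$ is a totally geodesic map, $\nabla d\pi\equiv 0$. The composition formula $\tau(\pi\circ s)=d\pi(\tau(s))+\mathrm{trace}\,\nabla d\pi(ds,ds)$ then reduces to $\tau(\pi\circ s_t)=d\pi(\tau(s_t))=d\pi(\partial_t s_t)=\partial_t(\pi\circ s_t)$, so $t\mapsto\pi\circ s_t$ solves the harmonic map heat flow on maps $M\to M$ with initial value $\mathrm{id}_M$. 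Since $\mathrm{id}_M$ is harmonic it is a stationary solution, and uniqueness of solutions of the parabolic flow forces $\pi\circ s_t\equiv\mathrm{id}_M$; thus each $s_t$ is a section.

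For global existence I would run the Bochner estimate. With $R^\Omega\le 0$ and $\mathrm{Ric}^M$ bounded, the energy density obeys $(\partial_t-\Delta_M)e(u)\le C\,e(u)$ for a constant $C$ depending only on $M$; the maximum principle on $M$ gives $\sup_M e(u_t)\le e^{Ct}\sup_M e(u_0)$, which is finite on every finite interval. Together with short-time existence (standard parabolic theory, using that $\Omega$ has bounded geometry) and Schauder bootstrapping this rules out finite-time blow-up and yields a solution for all $t\in\mathbb R^+$. This is exactly where the non-compactness of $\Omega$, the only point at which the classical Eells--Sampson hypotheses fail, must be circumvented: one uses the homogeneity of $\Omega$ for uniform local estimates and the bound $\int_0^\infty\|\tau(u_t)\|_{L^2}^2\,dt=E(u_0)-\lim_{t}E(u_t)<\infty$ to keep the trajectories in regions of controlled geometry on finite time intervals, which is the content of Hamilton's extension.

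Finally, for uniform equicontinuity I would upgrade the energy density bound to one uniform in $t$. The total energy $E(u_t)$ is non-increasing along the flow and so is bounded by $E(u_0)$; feeding this into the non-positive-curvature Bochner inequality and a parabolic mean-value iteration on the compact manifold $M$ yields a uniform bound $\sup_{M\times\mathbb R^+}|du_t|\le L$. A uniform gradient bound is a uniform Lipschitz bound, so $d_E(s_t(p),s_t(q))\le L\,d_M(p,q)$ for all $t$, which is the asserted uniform equicontinuity; note that this is consistent with the non-convergent (non-reductive) case, where the image escapes to $\partial\Omega$ while the gradient stays controlled. The main obstacle throughout is precisely the non-compactness of the target fiber: every a priori estimate that is classical for compact non-positively curved targets must be re-derived using only the non-positive curvature and isometric homogeneity of $\Omega$ together with the equivariance of the flow, and it is the interplay of these two features, rather than any new computation, that makes the argument go through.
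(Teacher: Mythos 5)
The paper does not actually prove this lemma: it is stated as a quotation from the literature (Hamilton's monograph and Diederich--Ohsawa, who run the same scheme for disc bundles), so there is no in-paper argument to compare against. The proof you have reconstructed --- pass to the $\rho$-equivariant flow $\partial_t u=\tau(u)$ into $(\Omega,g_\Omega)$, preserve the section property via the totally geodesic submersion $\pi\colon E\to M$ together with uniqueness of the flow, get long-time existence from the Bochner inequality $(\partial_t-\Delta)e(u)\le C\,e(u)$ plus the completeness and bounded homogeneous geometry of the Bergman metric, and extract uniform equicontinuity from a uniform gradient bound via energy monotonicity and parabolic mean-value iteration --- is precisely the Eells--Sampson/Hamilton argument that those references contain, adapted to a noncompact nonpositively curved fiber. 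So your route is the intended one, not a new one.

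One step as written has a genuine (though repairable) gap: the paper assumes only that $s_0$ is a \emph{continuous} section, whereas your global-existence step uses $\sup_M e(u_0)<\infty$ and your final estimate asserts $\sup_{M\times\mathbb R^+}|du_t|\le L$, which cannot hold down to $t=0$ when $s_0$ is not Lipschitz (the gradient necessarily blows up as $t\to 0^+$), and likewise the energy monotonicity $E(u_t)\le E(u_0)$ that feeds your mean-value iteration presupposes finite initial energy. Two standard fixes: either replace $s_0$ by a smooth section at the outset (one exists because $\Omega$ is contractible, and nothing in the lemma or in its later use in the paper depends on which initial section is chosen), or keep $s_0$ continuous, use instantaneous parabolic smoothing to obtain the uniform Lipschitz bound only for $t\ge t_0>0$, and treat $0<t<t_0$ by combining the uniform convergence $s_t\to s_0$ as $t\to 0^+$ with the uniform continuity of $s_0$ on the compact $M$ (a three-term triangle inequality then gives equicontinuity of the family $\{s_t : t\le t_0\}$). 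With either repair your argument is complete and matches the cited sources.
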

Let $\widehat \Omega$ denote the compact dual of $\Omega$.
We can choose $0<t_1<t_2<\cdots $ with 
$\lim_{k\rightarrow\infty} t_k = \infty$
so that  
$$s_\infty\colon   M\rightarrow  M\times_\rho \widehat \Omega
,\quad\quad s_\infty(p) := \lim_{k\rightarrow \infty} s_{t_k}(p)
$$
is harmonic (cf. \cite{Diederich_Ohsawa_1985, Hamilton}).

\begin{lemma}\label{converge to a boundary component}
Let $\{p_j\}_{j=1}^\infty$, $\{q_j\}_{j=1}^\infty$ be sequences in $\Omega$ and 
$p$, $q$ be points on $\partial \Omega$ such that $p_j\rightarrow p$, 
$q_j\rightarrow q$ as $j\rightarrow \infty$.
If $\liminf_{j\rightarrow \infty} d_\Omega(p_j, q_j) <\infty $, then 
$p$ and $q$ belong to the same boundary component.
\end{lemma}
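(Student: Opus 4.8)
The plan is to normalize the pair $(p_j,q_j)$ by an automorphism, extract a limiting holomorphic map of $\Omega$ into its boundary, and then read off that $p$ and $q$ both lie in the image of this map, which will be contained in a single boundary component.

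First I would pass to a subsequence along which $d_\Omega(p_j,q_j)\to L<\infty$ (this is where the $\liminf$ hypothesis is used). Since $\mathrm{Aut}(\Omega)$ acts transitively on $\Omega$ by Bergman isometries, I choose $\phi_j\in\mathrm{Aut}(\Omega)$ with $\phi_j(p_j)=o$ for a fixed base point $o\in\Omega$. Writing $q_j':=\phi_j(q_j)$, invariance of the Bergman distance gives $d_\Omega(o,q_j')=d_\Omega(p_j,q_j)\to L$, so the $q_j'$ all lie in a fixed closed Bergman ball, which is relatively compact in $\Omega$ by completeness of the Bergman metric. After a further subsequence, $q_j'\to w_0\in\Omega$, an \emph{interior} point.

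Next I would analyze the inverse automorphisms $\phi_j^{-1}\colon\Omega\to\Omega$. As $\Omega$ is bounded, Montel's theorem yields a subsequence with $\phi_j^{-1}\to\Phi$ locally uniformly for some holomorphic $\Phi\colon\Omega\to\overline\Omega$. Since $\phi_j^{-1}(o)=p_j\to p\in\partial\Omega$, we have $\Phi(o)=p$, so $\Phi$ attains a boundary value at an interior point and therefore cannot be an automorphism. I would then invoke the standard dichotomy for limits of automorphisms of a bounded symmetric domain: such a limit is either again an automorphism or maps all of $\Omega$ into $\partial\Omega$; hence $\Phi(\Omega)\subset\partial\Omega$. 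Because $\Omega$ is connected and open, any two of its points are joined by a chain of small affinely embedded complex discs contained in $\Omega$; composing with $\Phi$ turns such a chain into a chain of holomorphic arcs in $\partial\Omega$, so $\Phi(\Omega)$ lies in a single boundary component $F$, in the sense of the equivalence relation recalled above.

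Finally I would identify $p$ and $q$ as points of $F$. Clearly $p=\Phi(o)\in F$. For $q$, locally uniform convergence $\phi_j^{-1}\to\Phi$ on a compact neighborhood of the interior point $w_0$, together with $q_j'\to w_0$, gives $q_j=\phi_j^{-1}(q_j')\to\Phi(w_0)$; hence $q=\Phi(w_0)\in\Phi(\Omega)\subset F$. Thus $p$ and $q$ lie in the same boundary component. The step I expect to be the main obstacle is justifying rigorously that the Montel limit $\Phi$ sends $\Omega$ entirely into $\partial\Omega$ rather than taking some interior values, i.e. the automorphism-limit dichotomy, and then combining the two limit transitions ($q_j'\to w_0$ inside $\Omega$ and $\phi_j^{-1}\to\Phi$ uniformly near $w_0$) to conclude $q=\Phi(w_0)$; everything else is routine once this rigidity is in place.
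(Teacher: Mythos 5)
Your proof is correct, but it takes a genuinely different route from the paper's. The paper argues metrically: since the Bergman and Kobayashi distances are equivalent on $\Omega$, the hypothesis gives $\liminf_j k_\Omega(p_j,q_j)<\infty$, and then Proposition 3.5 of \cite{Zimmer_2017} is invoked to conclude that for the complex line $L$ through $p$ and $q$, the interior of $\overline\Omega\cap L$ in $L$ contains both points; that interior is a holomorphic disc lying in $\partial\Omega$, hence sits inside a single boundary component. You instead renormalize by automorphisms $\phi_j$ with $\phi_j(p_j)=o$, use completeness of the Bergman metric (valid for any bounded homogeneous domain) to trap $\phi_j(q_j)$ in a compact Bergman ball, and then combine Montel with H.~Cartan's classical dichotomy --- a locally uniform limit of automorphisms of a bounded domain is either again an automorphism or maps the domain into its boundary --- to produce a holomorphic $\Phi\colon\Omega\to\partial\Omega$ with $p=\Phi(o)$ and $q=\Phi(w_0)$; chaining affine discs in $\Omega$ and composing with $\Phi$ then places all of $\Phi(\Omega)$, and in particular $p$ and $q$, in one boundary component in the sense of the chain-of-arcs equivalence used in the paper. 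Both steps you flagged as potential obstacles are in fact standard: the dichotomy is Cartan's 1932 theorem on sequences of automorphisms of bounded domains, and the identification $q=\Phi(w_0)$ follows from the usual two-term estimate (uniform convergence of $\phi_j^{-1}$ near $w_0$ plus continuity of $\Phi$). The trade-off is this: the paper's proof is shorter but depends on a recent, nontrivial result about the Kobayashi metric on $\mathbb{C}$-convex sets (implicitly using convexity of the Harish-Chandra realization), and it yields extra geometric information (a single affine disc in $\partial\Omega$ through $p$ and $q$); your proof is self-contained and classical, needs only homogeneity, completeness, Montel, and Cartan, and so applies verbatim to any bounded homogeneous domain.
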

\begin{proof}
Since the Bergman distance and the Kobayashi distance are equivalent on $\Omega$,
the condition $\liminf_{j\rightarrow \infty} d_\Omega(p_j, q_j) <\infty $ implies 
$\liminf_{j\rightarrow \infty} d^K_\Omega (p_j, q_j) <\infty $ with the Kobayashi distance $d^K_\Omega$ on $\Omega$.
By Proposition 3.5 in \cite{Zimmer_2017}, for a complex line $L$ containing $p$ and $q$,
the interior of $\overline \Omega \cap L$ in $L$ contains $p$ and $q$.
Since the interior of $\overline \Omega\cap L$ should be contained in the boundary
component of $\Omega$, we obtain the lemma.
\end{proof}

\begin{proposition}\label{contained in maximal parabolic}
If $\rho$ is non-reductive, then ${\rho(\pi_1(M))}$ is contained in a maximal real parabolic subgroup in  $\text{Aut}(\Omega)$.
\end{proposition}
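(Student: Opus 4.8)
The plan is to manufacture a $\rho$-equivariant limit of the heat flow whose image lies in a single boundary component $B$ of $\Omega$, and then to read the inclusion $\rho(\pi_1(M))\subset N(B)$ off of equivariance, where $N(B)$ denotes the normalizer of $B$ in $\text{Aut}(\Omega)$. Concretely, I would start from the harmonic section $s_\infty\colon M\to M\times_\rho\widehat\Omega$ produced above by the heat flow, together with its $\rho$-equivariant lift $\widetilde s_\infty\colon \widetilde M\to\widehat\Omega$, so that $\widetilde s_\infty(\gamma x)=\rho(\gamma)\widetilde s_\infty(x)$ for all $\gamma\in\pi_1(M)$ and $x\in\widetilde M$. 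Since $\overline{\rho(\pi_1(M))}^{\text{Zar}}$ is non-reductive, Theorem \ref{Corlette} forbids any $\rho$-equivariant harmonic map $\widetilde M\to\Omega$; hence $\widetilde s_\infty$ cannot take all of its values in the interior $\Omega$, and there is at least one $x_0\in\widetilde M$ with $\widetilde s_\infty(x_0)\in\partial\Omega$.

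The analytic heart is a uniform bound on the Bergman distance between flow values. Writing $\widetilde s_t$ for the equivariant lifts of the heat flow sections $s_t$, so that $\widetilde s_{t_k}\to\widetilde s_\infty$ pointwise, I would use the equicontinuity lemma above (Hamilton \cite{Hamilton}, Diederich-Ohsawa \cite{Diederich_Ohsawa_1985}) together with the compactness of $M$ to show that for any two points $x,y\in\widetilde M$ one has $\sup_t d_\Omega(\widetilde s_t(x),\widetilde s_t(y))<\infty$. This is obtained by joining $x$ to $y$ by a path of finite length, subdividing it into short steps, and using the local triviality of $E$ to compare the fiber displacement measured in $ds_E^2$ with the Bergman distance $d_\Omega$; equicontinuity makes each step uniformly small in $t$, and compactness of $M$ bounds the number of steps.

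Next I would upgrade this to the statement that $\widetilde s_\infty(\widetilde M)$ lies in one boundary component. Taking $y=x_0$, the sequence $\widetilde s_{t_k}(x_0)$ converges to $\widetilde s_\infty(x_0)\in\partial\Omega$, while $d_\Omega(\widetilde s_{t_k}(x),\widetilde s_{t_k}(x_0))$ stays bounded; since the Bergman metric is complete, the distance from any fixed compact subset of $\Omega$ to points escaping to $\partial\Omega$ tends to infinity, so $\widetilde s_{t_k}(x)$ cannot subconverge inside $\Omega$ and therefore $\widetilde s_\infty(x)\in\partial\Omega$ as well. Now $\liminf_k d_\Omega(\widetilde s_{t_k}(x),\widetilde s_{t_k}(x_0))<\infty$, and Lemma \ref{converge to a boundary component} forces $\widetilde s_\infty(x)$ and $\widetilde s_\infty(x_0)$ into the same boundary component. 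As $x$ was arbitrary, $\widetilde s_\infty(\widetilde M)$ is contained in a single boundary component $B$.

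Finally I would exploit equivariance. For $\gamma\in\pi_1(M)$ the automorphism $\rho(\gamma)$ extends to $\widehat\Omega$ and permutes the boundary components of $\Omega$; since $\rho(\gamma)\widetilde s_\infty(x)=\widetilde s_\infty(\gamma x)\in B$ for every $x$, the component $\rho(\gamma)B$ shares a point with $B$ and hence equals $B$. Thus $\rho(\gamma)$ normalizes $B$, i.e. $\rho(\pi_1(M))\subset N(B)$. By Wolf \cite{Wolf_1972} and Kor\'anyi-Wolf \cite{Koranyi_Wolf_1965}, $N(B)$ is a real parabolic subgroup of $\text{Aut}(\Omega)$, and it is proper because $B$ is a proper boundary component; since every proper parabolic subgroup is contained in a maximal one, $\rho(\pi_1(M))$ lies in a maximal real parabolic subgroup, as claimed. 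I expect the middle step to be the genuine obstacle: extracting the uniform bound $\sup_t d_\Omega(\widetilde s_t(x),\widetilde s_t(y))<\infty$ from an equicontinuity statement phrased in the ambient metric $ds_E^2$, and combining it with the boundary-escape behavior of the Bergman distance so that Lemma \ref{converge to a boundary component} applies uniformly over all of $\widetilde M$.
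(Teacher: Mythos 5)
Your proposal is correct and follows essentially the same route as the paper: heat-flow limit $s_\infty$, Corlette's theorem to force the limit into $M\times_\rho\partial\Omega$, a $t$-uniform fiber-distance bound combined with Lemma \ref{converge to a boundary component} to place all limit values in a single boundary component $B$, and equivariance to conclude $\rho(\pi_1(M))\subset N(B)$, a maximal real parabolic subgroup. In fact you spell out two steps the paper only asserts—deriving the uniform bound from equicontinuity plus compactness of $M$, and using completeness of the Bergman metric to show that once one point's limit reaches $\partial\Omega$ every point's limit does—so your write-up is, if anything, more complete than the original.
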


\begin{proof}
Since $\rho$ is non-reductive,
by Theorem \ref{Corlette} 
there exists no harmonic section from $M$ to $ M\times_\rho  \Omega$.
This implies that there exists
a family $\{s_t = s(\cdot, t) : t\in [0,\infty)\}$ satisfying \eqref{heat equation}
which is uniformly equicontinuous on $  M$ 
with respect to $g_{M}$ and $g_E$.

Let $p,\,q\in M$. Choose $t_1<t_2<\cdots$
with $\lim_{k\rightarrow \infty} t_k=\infty$ so that $s_\infty:= \lim_{k\rightarrow \infty}
s_{t_k}$ is harmonic. For simplicity we will denote $s_{t_k}$ by $s_k$.
Then we have $\lim_{k\rightarrow\infty}d_E(s_{k}(p), s_{k}(q))<\infty$ and 
$\lim_{k\rightarrow \infty} s_{k}(p)$, $\lim_{k\rightarrow \infty} s_{k}(q) \in  M\times_\rho \partial \Omega$.
Denote $$
s_\infty(p) = [\tilde p, z], \quad s_\infty(q)=[\tilde q, w], \quad 
s_t(p) = [\tilde p, z_t]\quad \text{ and }\quad  s_t(q) = [\tilde q, w_t], $$
where $z,w\in \partial \Omega, z_t, w_t\in \Omega$ and 
$\tilde p, \tilde q \in \widetilde M$.
Since we have 
\begin{equation}\nonumber
\begin{aligned}
\infty> \lim_{t\rightarrow\infty}d_E \left( s_t(p), s_t(q) \right) 
&= \lim_{t\rightarrow \infty}d_E \left( [\tilde p, z_t], [\tilde q, w_t] \right)\\
&\geq \lim_{t\rightarrow \infty} \min_{\gamma\in \pi_1(M)} 
\left( d_M \left(\gamma \tilde p, \gamma \tilde q \right)+d_\Omega \left(\rho(\gamma) z_t, \rho(\gamma) w_t \right) \right),
\end{aligned}
\end{equation}
there exists $\gamma\in \pi_1(M)$ such that 
$d_\Omega(\rho(\gamma) z, \rho(\gamma) w)<\infty$.
Hence $z$ and $w$ should be contained in a boundary component of $\partial \Omega$, say $B$, 
by Lemma \ref{converge to a boundary component}.

Let $\gamma\in \pi_1(M)$. Since 
we have $s_\infty(p) = [\tilde p, z] = [\gamma\tilde p, \rho(\gamma) z]$ for any $p\in M$,
$z$ and $\rho(\gamma) z$ belongs to $B$.
Since automorphisms of $\Omega$ permute boundary components
and $z, \,\rho(\gamma)z$ both belong to $B$, 
$\rho(\gamma)$ is a normalizer
of $B$ which is a maximal real parabolic subgroup of  $\text{Aut}(\Omega)$.
\end{proof}
\begin{remark}
Let $\Omega=\Omega_1\times\cdots \times \Omega_k$
be a bounded symmetric domain with irreducible factors 
$\Omega_i$, $i=1,\ldots, k$.
If $\rho$ is non-reductive,
then ${\rho(\pi_1(M))}$ is contained in $P_1\times \cdots\times P_k$
where $P_i$ is a maximal parabolic subgroup in  $\text{Aut}(\Omega_i)$ or  $\text{Aut}(\Omega_i)$ itself for each $i$.
\end{remark}
\begin{lemma}\label{converge to}
Suppose that $\rho$ is non-reductive.
Then a limit map $s_\infty$ 
of the family $\{s_t=s(\cdot, t) : t\in [0,\infty)\}$ 
is a harmonic map with respect to the induced metric from the K\"ahler metric on $M$ and the Bergman metric 
of the boundary component where $s_t$ converges to.
\end{lemma}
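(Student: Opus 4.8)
The plan is to read off the harmonicity of $s_\infty$ directly from the heat flow \eqref{heat equation}, using the Siegel domain realization of $\Omega$ adapted to the boundary component $B$. First I would record the setup inherited from the preceding results: since $\overline{\rho(\pi_1(M))}^{\text{Zar}}$ is non-reductive, Theorem \ref{Corlette} forbids a harmonic section into $M\times_\rho\Omega$, so the family $\{s_t\}$, uniformly equicontinuous by the lemma of Hamilton and Diederich-Ohsawa, must escape to the boundary; by the argument of Proposition \ref{contained in maximal parabolic} together with Lemma \ref{converge to a boundary component} and the connectedness of $M$, the subsequential limit $s_\infty$ has image in $M\times_\rho B$ for a single boundary component $B$. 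Passing to the $\rho$-equivariant lift $\widetilde s_t\colon\widetilde M\to\Omega$ of the heat flow, the goal becomes to show that $\widetilde s_\infty\colon\widetilde M\to B$ is harmonic for the K\"ahler metric on $\widetilde M$ and the Bergman metric $g_B$ of $B$.

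Next I would introduce coordinates adapted to $B$. The partial Cayley transform $c_{\Pi-\Lambda}$ realizes $\Omega$ as the Siegel domain of the third type of Theorem \ref{Cayley transform}\eqref{c(Omega)}, in which a point of $\frak m^-$ is written $E=E_1+E_2+E_3$ with $E_1\in\frak m_{\Pi-\Lambda,1}^-$, $E_2\in\frak m_2^{\Lambda,-}$ the transverse cone and fiber variables and $E_3\in\Omega\cap\frak m_\Lambda^-$ parametrizing $B$. The projection $\pi_B\colon E\mapsto E_3$ onto the base is equivariant for the Chevalley quotient $B^\Lambda\to\text{Aut}(B)$: by the translation formula \eqref{translate}, $N^{\Lambda,-}$ alters only the $\frak m_{\Pi-\Lambda,1}^-$- and $\frak m_2^{\Lambda,-}$-components of $E$ and hence fixes $E_3$, while $G_\Lambda$ acts on $E_3$ as $\text{Aut}(B)$. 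Thus, since $\rho(\pi_1(M))$ normalizes $B$ and hence lies in $B^\Lambda$ (Proposition \ref{contained in maximal parabolic}), $\rho$ descends to a representation $\rho_\Lambda\colon\pi_1(M)\to\text{Aut}(B)$, the lift $\widetilde s_\infty$ is $\rho_\Lambda$-equivariant, and, writing $c_{\Pi-\Lambda}\widetilde s_t=(E_1(t),E_2(t),E_3(t))$, the tangential component $E_3(t)=\pi_B\circ c_{\Pi-\Lambda}\widetilde s_t$ converges to $\widetilde s_\infty$ while $\text{Im}\,E_1(t)$ exits the cone $c^\Lambda$ and the transverse variables degenerate. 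Since $\Omega\cap\frak m_\Lambda^-$ is the Harish-Chandra realization of the totally geodesic $X_\Lambda\subset\Omega$, the induced Bergman metric on the base equals $g_B$ up to a constant.

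The decisive step is the behaviour of $g_\Omega$ in these coordinates near $B$. I would show that the tangential $(E_3,\overline E_3)$-block of $g_\Omega$ converges, up to a constant, to $g_B$ as the transverse variables degenerate, so that the $E_3$-Christoffel symbols of $g_\Omega$ converge to those of $g_B$, and that the mixed Christoffel symbols coupling $E_3$ to $E_1,E_2$ drop out in the limit. Here the bracket relations of Lemma \ref{bracket} and parts \eqref{translation}--\eqref{real} of Theorem \ref{Cayley transform}, which say that $N^{\Lambda,-}$ and $K^{\Lambda*}$ preserve the splitting $\frak m^-=\frak m_{\Pi-\Lambda,1}^-+\frak m_2^{\Lambda,-}+\frak m_\Lambda^-$, pin down the relevant structure constants and force the off-diagonal coupling to vanish asymptotically. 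Projecting the heat equation $\partial_t\widetilde s_t=\tau_{g_\Omega}(\widetilde s_t)$ onto the tangential directions and letting $t\to\infty$ along the chosen subsequence, the tangential tension field converges to $\tau_{g_B}(\widetilde s_\infty)$; since the energy estimate for the flow forces the tangential velocity $\partial_tE_3(t)\to0$, this limit vanishes and $\widetilde s_\infty$ is $g_B$-harmonic, which is the assertion.

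The hard part is the transverse degeneration in this last step. Because $g_\Omega$ blows up in the cone direction as $\text{Im}\,E_1$ leaves $c^\Lambda$, one must verify that the tangential energy of $\widetilde s_t$ stays bounded and that $\partial_tE_3(t)\to0$ along the subsequence, even though the ambient energy is carried off to infinity; this is exactly where the uniform equicontinuity of $\{s_t\}$ and the fibered structure of the Siegel domain are used to separate the bounded tangential motion from the escaping transverse part. The second delicate point is to check that the mixed Christoffel contributions genuinely vanish in the limit rather than merely remaining bounded, for which the explicit Cayley-transform formulas of Theorem \ref{Cayley transform} together with Lemma \ref{bracket} are essential. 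Once both are in hand, the limiting equation is precisely the $g_B$-harmonic map equation, and the lemma follows.
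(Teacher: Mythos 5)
Your proposal is correct and follows essentially the same route as the paper: the paper's entire proof is the single observation that the Bergman metric of the boundary component is the limit of $g_\Omega$ restricted to the corresponding characteristic symmetric subspaces, which is precisely the metric-degeneration fact at the heart of your Siegel-coordinate argument (the convergence of the tangential $E_3$-block of $g_\Omega$ to $g_B$ with the transverse coupling dropping out). The additional analytic verifications you flag as the hard part (bounded tangential energy, $\partial_t E_3\to 0$, vanishing of the mixed Christoffel contributions) are elaborations of what the paper compresses into that one sentence.
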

\begin{proof}
Since the Bergman metric on the boundary component is the limit of $g_\Omega$ restricted 
to the corresponding characteristic symmetric subspaces of it, we obtain the lemma.
\end{proof}

\begin{lemma}\label{product}
For $\Lambda\subset \Pi$ with $|\Lambda|=1$, consider a totally geodesic subspace $\Delta\times X_{\Pi-\Lambda,0}\subset \Omega$.
Let $\sigma\colon \Omega\to\frak m_{\Lambda,1}^-$ be the projection.
If $\sigma(p)$ tends to $\partial \Delta$, then $p$ tends to $\partial \Delta\times \overline{X_{\Pi-\Lambda,0 }}$.
\end{lemma}
\begin{proof}
Let $\frak m^- = \mathbb C\alpha \oplus \mathcal H_\alpha \oplus\mathcal N_\alpha$ be the decomposition with respect to the tangent unit vector $\alpha\in T_0 \Delta$
such that the bisectional curvature of $\Omega$ with respect to the Bergman metric in directions $\alpha$ and $\xi \in \mathcal H_\alpha$ (resp. $\xi\in \mathcal N_\alpha$) equals to $1/2$ (resp. $0$) where $\xi$ is a root vector.
Remark that $\mathbb C \alpha \cap \Omega\cong \Delta$ and $\mathcal N_\alpha\cap \Omega\cong X_{\Pi-\Lambda,0}$. (For more detail, see \cite{Mok_1989}.) Therefore we only need to show that $\mathcal H_\alpha$-component vanishes as the modulus of $\mathbb C\alpha$-component tends to $1$.
For each root vector of unit norm $\xi\in \mathcal H_\alpha$, either 
\begin{enumerate}
\item $(\mathbb C\alpha +\mathbb C\xi)\cap \Omega\cong \mathbb B^2$ is totally geodesic in $\Omega$, or 
\item
there exists $\eta\in \mathcal N_\alpha$ such that $(\mathbb C\alpha + \mathbb C \xi + \mathbb C \eta)\cap \Omega \cong \Omega_3^{IV}$ is totally geodesic in $\Omega$
\end{enumerate}
 by Proposition 3.6 in \cite{Mok_Tsai_1992}. If $(\mathbb C\alpha +\mathbb C\xi)\cap \Omega\cong \mathbb B^2$, then $\mathbb C\xi$-component of $p$ tends to zero since the modulus of $\mathbb C\alpha$-component tends to $1$. 
Now let us consider the case (2). By \eqref{note4} for each $w=(w_1, w_2,w_3)$ with $|\zeta_1|=1$ we have 
\begin{equation}\nonumber
\begin{aligned}
0\leq 1-2w\overline w^t + |ww^t|^2 &= (1-|\zeta_1|^2)(1-|\zeta_2|^2) - 2|w_3|^2 + |i\zeta_1\zeta_2 + w_3^2|^2 - | i\zeta_1\zeta_2|^2\\
&=- 2|w_3|^2 + |w_3|^4 + 2\text{Re}(i\zeta_1\zeta_2 \overline w_3^2)
\leq - 2|w_3|^2 + |w_3|^4 + 2|\zeta_2\overline w_3^2|,
\end{aligned}
\end{equation}
which implies that \begin{equation}\label{def2}
2\leq |w_3|^2 + 2|\zeta_2|^2,
\end{equation}
and 
\begin{equation}\label{def1}
1\geq w\overline w^t = \frac{1}{2} + \frac{1}{2} |\zeta_2|^2 + |w_3|^2
\end{equation}
since $w_1^2 + w_2^2 = i\zeta_1\zeta_2$ for $w_1 = \lambda(\zeta_1 + \zeta_2)$, $w_2 = i\lambda(\zeta_1 - \zeta_2)$ with $\lambda^2 = i/4$.
By \eqref{def1} and \eqref{def2}, we have 
\begin{equation}\nonumber
2\leq |w_3|^2 + 2|\zeta_2|^2
\leq |w_3|^2 + 2(1-2|w_3|^2)
\end{equation}
and we induce $w_3=0$. This completes the proof.
\end{proof}

For a non-reductive representation $\rho\colon \pi_1(M)\to \text{Aut}(\Omega)$, let $B$ be the boundary component to where a subsequence of a family of solutions of the heat equation \eqref{heat equation} converges. Denote by $N(B):=\{g\in \text{Aut}(\Omega) : gB=B\}$ the set of normalizers of $B$ in  $\text{Aut}(\Omega)$ and by
$c_B$ the Cayley transformation with respect to $B$.
%By Proposition 3.9 in \cite[III]{Smooth}, the association $B\mapsto N(B)$ defines a bijection between the set of  boundary components of $\Omega$ and the set of maximal real parabolic subgroups of  $\text{Aut}(\Omega)$. Note that since $\Omega$ is irreducible, $\text{Aut}(\Omega)$ is simple.
Define $\rho_c \colon \pi_1(M)\rightarrow Ad_{c_B} N(B)\subset G^\mathbb C$ by \begin{equation}\nonumber
\rho_c(\gamma) :=c_B\circ \gamma\circ c_B^{-1} \colon c_B(\Omega)\rightarrow c_B(\Omega).
\end{equation}
Then $ M\times_{\rho_c} c_B(\Omega)$ is holomorphically equivalent 
to $ M \times_\rho \Omega $ as a fiber bundle over $M$
and $c_B(\Omega)$ has the form \eqref{c(Omega)} in Theorem \ref{Cayley transform}.

\begin{theorem}\label{main theorem2}
Let $E = M \times_\rho \Omega$ be a holomorphic fiber bundle over a compact K\"ahler manifold $M$
with an irreducible bounded symmetric domain fiber $\Omega$ where $\rho\colon \pi_1(M)\rightarrow \text{Aut}(\Omega)$ is a non-reductive representation.
Suppose that there exists a family of solutions of \eqref{heat equation}
which has a subsequence that converges to a 1-component.
Then there exist a plurisubharmonic function $\psi$ on $E$ and a
totally geodesic subspace $\Delta\times\Delta^\perp\subset \Omega$ which is invariant under $\rho(\pi_1(M))$ such that 
$$\lim_{[p,z]\rightarrow \partial E \setminus  M\times_\rho (\partial\Delta \times \Delta^\perp)}\psi([p,z])= \infty$$
where $\{e^{i\theta}\}\times \Delta^\perp$ is a maximal boundary component of $\Omega$ for each $\theta\in \mathbb R$.
\end{theorem}

\begin{proof}[Proof of Theorem \ref{main theorem2}]
By the assumption and Lemma \ref{converge to} there exist a maximal boundary component $B$ and a $\rho$-equivariant 
harmonic map from $\widetilde M$ to $B$ with respect to the 
K\"ahler metric induced from $M$ and the Bergman metric on $B$.
Moreover $\rho(\pi_1(M))\subset N(B)$
and any element in $\rho(\pi_1(M))$ has the decomposition of the form \eqref{decomposition}.
Hence there exists a psh exhaustion function, 
say $\psi_B$, on $M\times_\rho B\cong M\times_{\rho_c} X_{\Lambda, 0}$ by Theorem \ref{main theorem1}.

From what follows we will denote $K_\Omega(z,z)$ by $K_\Omega(z)$ for a domain $\Omega$ for simplicity.

Since $B$ is a 1-component, we have $|\Lambda|=1$ and hence $\dim \frak m_{\Pi - \Lambda, 1}^- = 1$.
Let $\mathbb H$ denote the upper half space $\{ z\in \mathbb C : \text{Im } z >0\}$.
Define a function $\delta\colon M\times _{\rho_c} c_B(\Omega)\rightarrow \mathbb R$ by
\begin{equation}\label{delta}
\delta([p, (E_1, E_2, E_3)]):=
\frac{1}{2a}\log \left( \frac{K_{c_B(\Omega)}(E_1,E_2,E_3)}{K_{\mathbb H}(E_1)^a K_{X_{\Lambda,0}}(E_3)}\right)
\end{equation}
where $a = 1 + \frac{1}{2}\dim \frak m^{\Lambda,-}_2$.
First we claim that $\delta$ is a well defined function. 
For a biholomorphism $f\colon D_1\rightarrow D_2$ with domains $D_1$, $D_2$ in $\mathbb C^n$,
we have $K_{D_1}(z,z)= |\det J(f)(z)|^2 K_{D_2}(f(z), f(z)),$
where we denote by $J(f)$ the Jacobian matrix of $f$.
Hence, for the claim to hold, we only need to show
\begin{equation}\label{det}
\left|\det( J(g))\right| = \left|\det(  J(g|_{\frak m_\Lambda^-})) \det ( J  (g|_{\frak m_{\Pi - \Lambda, 1}^- } ) )^a\right|
\end{equation}
for any $g\in \rho_c(\pi_1(M))$.
Since $\rho_c(\pi_1(M))\subset \text{Ad}_{c_B}N(B)$, we may decompose $g$ by $g_1g_2g_3$ with $g_1\in G_\Lambda$, $g_2\in L_2^\Lambda K^*_{\Pi-\Lambda,1}$, $g_3 \in N^{\Lambda,-}$.

Since $g_3$ acts on $c_B(\Omega)$ by the expression \eqref{translate}, the left and the right hand sides of the equation \eqref{det} are both equal to $1$ and hence \eqref{det} holds for $g_3$.

Note that $L_2^{\Lambda} K^*_{\Pi -\Lambda,1}$ acts on $\frak m^-$ by the 
adjoint representation and it preserves 
$\frak m_{\Pi -\Lambda, 1}^-$, $\frak m_2^{\Lambda, -}$ and $\frak m_\Lambda^-$. 
Since the action of $g_2$ on $\frak m_{\Pi -\Lambda, 1}^-$ is real, there exists a positive constant $C$ such that 
$C\text{Im }(E_1) = \text{Im }(g_2 E_1)$ for any $E_1$. 
This implies that the right hand side of
the equation \eqref{det} equals to $C^a$. 
On the other hand since $g_2$ acts on $\frak m_{\Lambda}^-$ trivially, by \eqref{bilinear form} we have 
\begin{equation}\label{g2}
C\left( \text{Im}\, E_1 - \text{Re}\, F_{E_3}(E_2, E_2) \right)
= \text{Im}\, (g_2 E_1) - \text{Re}\, F_{E_3}(g_2 E_2, g_2 E_2),
\end{equation}
which implies that $C \text{ Re}\, F_{E_3}(E_2, E_2)  = \text{Re}\, F_{E_3}(g_2 E_2, g_2 E_2)$. Hence the action of $g_2$ on $\frak m_2^{\Lambda, -}$ is equivalent to the action of $\sqrt{ C} U$ for some unitary transformation $U$ on $\frak m_2^{\Lambda, -}$.  
As a result $|\det ( J(g))|$ equals to $C^a$.
%Since $G_\Lambda$ acts on $\frak m_{\Pi-\Lambda,1}^-$ trivially, when we consider the action of $g_1$, we only need to show that $\left|\det( J(g))\right| = \left|\det(  J(g|_{\frak m_\Lambda^-}))\right|$. 

Remark that $G_\Lambda$ acts on $\frak m_{\Pi-\Lambda,1}^-$ trivially.
Let $g_1=\text{exp}(m_\Lambda^+)k\,\text{exp}(m_\Lambda^-)\in G_\Lambda$
where $m_\Lambda^+ \in \frak m_\Lambda^+$, $m_\Lambda^- \in \frak m_\Lambda^-$
and $k\in K^\mathbb C$. 
Since $\text{exp}(m_\Lambda^-)$ acts on $\frak m^-$ as a translation and $k$ belongs to the isotropy subgroup at $0$, we have $\det(J(k\,\text{exp}(m_\Lambda^-)) = 1$.
Hence we only need to consider the case when $g=\text{exp}( m^+_\Lambda$).
Let $E=(E_1,E_2,E_3)\in \frak m^-= \frak m_{\Pi-\Lambda,1}^- +  \frak m_2^{\Lambda,-}
+\frak m_\Lambda^-$ and 
$e=(e_1,e_2,e_3)\in T_E \frak m^- \cong \frak m_{\Pi-\Lambda,1}^- 
+  \frak m_2^{\Lambda,-} +\frak m_\Lambda^-$.
By the Hausdorff-Campbell formula, we have  
\begin{equation}\nonumber
\begin{aligned}
&\text{exp}(m^+_\Lambda) \text{exp}(E+te)\cdot K^\mathbb C M^+\\
&= \text{exp}\left( E+ te + [m^+_\Lambda, E+te] + \frac{1}{2} [m^+_\Lambda,[m^+_\Lambda, E+te]]
+\cdots\right) K^\mathbb C M^+ .\\
\end{aligned}
\end{equation}
By Lemma \ref{bracket} we obtain
\begin{equation}\nonumber
\begin{aligned}
&E+ te + [m^+_\Lambda, E+te] + \frac{1}{2} [m^+_\Lambda,[m^+_\Lambda, E+te]]+\cdots\\
&= \text{Ad}_{\text{exp}(m^+_\Lambda)}(E) + t\left(
e_1 + e_2 + [m^+_\Lambda, e_2] + \text{Ad}_{\text{exp}(m_\Lambda^+)}(e_3) \right),
\end{aligned}
\end{equation}
and \begin{equation}\nonumber
[m^+_\Lambda, e_2]\in \frak k^\mathbb C.
\end{equation}
As a consequence, we obtain \eqref{det} for $g_1$ and hence we complete the proof of the claim.

On the other hand, the relation \eqref{det} implies that for any $g_3\in G_{\Lambda}$
we have 
$$
K_{c_B(\Omega)}(i,0,E_3) = K_{c_B(\Omega)}(i,0,g_3E_3)\left| \det J (g_3|_{\frak m_{\Lambda}^-})(E_3)\right|^2
$$
which implies that there exists a positive constant $\nu$ such that
\begin{equation}\nonumber
K_{c_B(\Omega)}(i, 0, E_3)
=\nu K_{X_{\Lambda,0}}(E_3)
\end{equation}
for any $E_3\in X_{\Lambda,0}$ since $K_{X_{\Lambda,0}}(E_3) = K_{X_{\Lambda,0}}(g_3E_3)| \det J (g_3|_{\frak m_{\Lambda}^-})(E_3)|^2$.
As a consequence we have 
\begin{equation}\nonumber
\begin{aligned}
K_{c_B(\Omega)}(E_1, E_2, E_3 ) 
&= K_{c_B(\Omega)}(i(\text{Im }E_1 - \text{Re }F_{E_3}(E_2, E_2)), 0, E_3)\\
&= \frac{K_{c_B(\Omega)}(i, 0, E_3)}{\left( \text{Im }E_1 - \text{Re }F_{E_3}(E_2, E_2)\right)^{2a}}\\
&= \nu K_{X_{\Lambda,0}}(E_3)K_{\mathbb H}\left(i( \text{Im }E_1 - \text{Re }F_{E_3}(E_2, E_2))\right)^a.
\end{aligned}
\end{equation}
Since 
\begin{equation}\label{delta}
\begin{aligned}
\delta([p, (E_1, E_2, E_3)])&=
\frac{1}{2a}\log \left( \frac{K_{c_B(\Omega)}(E_1,E_2,E_3)}{K_{\mathbb H}(E_1)^a K_{X_{\Lambda,0}}(E_3)}\right)\\
&= \frac{1}{2a} \log \left( \frac{\nu K_{\mathbb H}(i(\text{Im }E_1 - \text{Re } F_{E_3}(E_2, E_2)))^a }{K_{\mathbb H}(E_1)^a }\right) \\
&= 
-\log \left( \frac{\text{Im }E_1 - \text{Re } F_{E_3}(E_2, E_2) }{\text{Im }E_1}\right)
+ \frac{1}{2a}\log \nu,
\end{aligned}
\end{equation}
one obtains that $\delta([p, (E_1, E_2, E_3)])$ diverges to infinity whenever $\text{Im }E_1 - \text{Re } F_{E_3}(E_2, E_2)$ tends to zero unless $E_2= 0$.
Remark that $\text{Re }F_{E_3}$ is a positive definite bilinear form since $c_B(\Omega)$ is Kobayashi hyperbolic.
Hence by a straightforward calculation, the last expression in \eqref{delta} is psh.

Now define a function on $ M\times_{\rho_c} c_B(\Omega)$ by
\begin{equation}\nonumber
\begin{aligned}
\psi_{c_B(\Omega)} ([p, (E_1, E_2, E_3)]) := &\psi_B([p, E_3]) 
+\delta([p, (E_1, E_2, E_3)]).
\end{aligned}
\end{equation}
%Note that $\psi_B$ and $\delta$ both are bounded from below and $\delta$ tends to infinity as $\text{Im }E_1 - \text{Re }F_{E_3}(E_2, E_2) $ tends to zero.
Let $\Delta_\Lambda$ be the unit disc such that $\Delta_\Lambda\times X_{\Lambda,0}$ is a totally geodesic subspace in $\Omega$
and let $\sigma\colon\Omega\rightarrow \Delta_\Lambda$ be a projection to $\frak m_{\Pi-\Lambda,1}^-$. Note that $\Delta_\Lambda\subset \frak m_{\Pi-\Lambda,1}^-$ and $\sigma(\Omega)=\Delta_{\Lambda}$.
For $(E_1, E_2,E_3)\in c_B(\Omega)$, if $|E_1|$ tends to infinity or $\text{Im }E_1$ tends to zero, then 
\begin{equation}\nonumber
\sigma\circ c_B^{-1}(E_1,E_2,E_3) \to \partial \Delta_\Lambda,
\end{equation}
since $c_B$ maps $\Delta_\Lambda$ onto $\mathbb H=\{(E_1,0,0)\in \mathfrak m^- : \text{Im }E_1 >0\}$ biholomorphically.
Hence $c_B^{-1}(E_1,E_2,E_3)$ converges to $\partial \Delta_\Lambda\times\overline{X_{\Lambda,0}}$ by Lemma \ref{product}
and hence the induced function $\psi$ from $\psi_{c_B(\Omega)}$ to $M\times_\rho \Omega$ can be finite only when $[p,z]$ tends to $M\times_\rho(\partial \Delta_\Lambda\times X_{\Lambda,0})$ since $\psi_B([p,E_3])$ diverges to infinity as $E_3$ converges to $\partial X_{\Lambda, 0}$. As a result it is a desired plurisubharmonic function.
\end{proof}

\begin{remark}
\begin{enumerate}
\item
 For each irreducible bounded symmetric domain, $a$ is given as follows:
\begin{table}[h]
%\caption{}\label{ab}
\begin{tabular}{c|c|c|c|c|c|c}
$\Omega$& $\Omega_{p,q}^I (p\leq q)$& $\Omega_n^{II}$& $\Omega_n^{III}$ &$\Omega_n^{IV}(n\geq 2)$& $\Omega_{16}^{V}$& $\Omega_{27}^{VI}$  \\\hline
$a$&$\frac{p+q}{2}$&$n-1$&$\frac{n+1}{2}$&$n\over 2$&6&9 
\end{tabular}
\end{table}
\item
The boundary component $B$ in the proof of Theorem \ref{main theorem2} is of the form 
$\{e^{i\theta}\}\times \Delta^\perp$ for some $\theta$.
\end{enumerate}
\end{remark}

\begin{corollary}
Let $E = M \times_\rho \Omega$ be a holomorphic fiber bundle over a compact K\"ahler manifold $M$
with an irreducible bounded symmetric domain fiber $\Omega$ where $\rho\colon \pi_1(M)\rightarrow \text{Aut}(\Omega)$ is a non-reductive representation.
Suppose that  $\rho$ is a maximal $1$-parabolic representation.
If there are two families of solutions of \eqref{heat equation}
which have subsequences that converge to 1-components $\{e^{i\theta_1}\}\times \Delta_1^\perp$ and $\{e^{i\theta_2}\}\times \Delta_2^\perp$ respectively for some $\theta_1,\,\theta_2$ with different $\Delta_1$ and $\Delta_2$,
then $E$ is weakly 1-complete.
\end{corollary}
\begin{proof}
Let $B_1$ and $B_2$ be the 1-components where the subsequences of the families of the solution of \eqref{heat equation} converges. Let $\psi_1$ and $\psi_2$ be the psh functions which are constructed in Theorem \ref{main theorem2} with respect to $B_1$ and $B_2$ respectively. Then $\psi_1 + \psi_2$ gives a psh exhaustion on $E$ since $\partial \Delta_1\times \Delta_1^\perp$ and $\partial \Delta_2 \times \Delta_2^\perp$ do not have intersection if $\Delta_1$ and $\Delta_2$ are different.
\end{proof}

From what follows we consider when $\Omega$ is the unit ball $\mathbb B^N$. In this case the boundary component $B$ is a point on the boundary and $$
c_{B} (\mathbb B^N) = \left\{E=(E_1, E_2)\in \frak m^- =\frak m^-_{\Pi-\Lambda,1} + \frak m_2^{\Lambda,-} \colon \text{Im } E_1 - \text{Re } F(E_2,E_2)>0\right\}.
$$
Moreover $M\times_{\rho_c} \frak m^- \cong M\times_{\rho_c} 
(\frak m^-_{\Pi-\Lambda,1} + \frak m_2^{\Lambda,-})$ is an affine bundle, i.e. transition maps are affine since $G_\Lambda = \{id\}$ which  yields $\rho_c(\pi_1(M))\subset L_2^\Lambda 
K_{\Pi-\Lambda, 1}^*  N^{\Lambda,-}$.
Let 
$$
\mu\colon \pi_1(M)\rightarrow  L_2^\Lambda  K^*_{\Pi-\Lambda,1} 
$$
denote the representation $\rho_c$ project to $L_2^\Lambda K^*_{\Pi-\Lambda,1} $ by ignoring translations.
Then $M\times_\mu \frak m^-$ is a holomorphic vector bundle.

The holomorphic vector bundle is said to be {\it polystable} when it is a direct sum of stable vector bundles and each component has the same slope. See \cite{Uhlenbeck_Yau_1986} for the definitions of stable vector bundles and their slope.
\begin{theorem}\label{theorem_ball}
Let $E = M \times_{\rho} \mathbb B^N$ be a holomorphic $\mathbb B^N$-fiber bundle over a compact K\"ahler manifold $M$
where $\rho\colon \pi_1(M)\rightarrow \text{Aut}(\Omega)$ is a non-reductive representation.
Suppose that  $M\times_{\mu} \frak m^-$ is a polystable vector bundle.
Then $M$ is weakly 1-complete.
\end{theorem}
\begin{proof}
Since  all Chern classes of the flat bundle vanish, by Uhlenbeck-Yau (\cite{Uhlenbeck_Yau_1986}) $M\times_{\mu} \frak m^-$
has a flat Hermitian structure, i.e. there exists a trivializing cover $\{U_\alpha\}$ of $M\times_{\rho_c} \frak m^- \to M$ with fiber coordinates $\{t_\alpha\}$ satisfying 
$$t_\alpha = a_{\alpha\beta} t_\beta + b_{\alpha\beta}
$$
where $a_{\alpha\beta}\in U(N)$ and $b_{\alpha\beta}\in \mathbb C^N$.
Moreover there exists a pluriharmonic functions $c_\alpha\colon U_\alpha\rightarrow \mathbb C^N$ such that $b_{\alpha\beta} = c_\alpha - a_{\alpha\beta}c_\beta$ on $U_\alpha\cap U_\beta$ since $M$ is K\"ahler (see \cite{Diederich_Ohsawa_1982}). Then the function $\phi:=|t_\alpha - c_\alpha|^2$ defines a psh exhaustion function on $M\times_{\rho_c} \frak m^- \to M$. 
Now define 
$$\psi_{c_B(\mathbb B^N)}([p,E]) := \phi([p,E]) 
- \log\text{dist}([p,E], \partial(M\times_{\rho_c}c_B(\Omega)))$$
where dist is induced from the flat Hermitian structure of $M\times_{\mu} \frak m^-$.
Then this is the desired psh exhaustion function.
\end{proof}

\begin{corollary}
Any $\mathbb B^2$ fiber bundle over a compact K\"ahler manifold is weakly 1-complete.
\end{corollary}
\begin{proof}
By Theorem \ref{main theorem1}, we only need to consider when $\rho$ is non-reductive. Since every boundary component of $\mathbb B^2$ is an 1-component and 
$M\times_{\mu} (\frak m^-_{\Pi-\Lambda, 1} + \frak m_2^{\Lambda,-})$ 
is a direct sum of flat line bundles (hence of degree zero)
$M\times_{\mu} \frak m^-_{\Pi-\Lambda, 1}$ and $M\times_{\mu}  \frak m_2^{\Lambda,-}$ by Lemma \ref{split} and Theorem \ref{Cayley transform} \eqref{adjoint}, the fiber bundle is weakly 1-complete by Theorem \ref{theorem_ball}.
\end{proof}

\begin{lemma}\label{split}
Let  $\mu\colon \pi_1(M)\to GL(V)$ be a representation of a vector space $V$. Suppose $V = V_1 + V_2$ and $\mu$ acts invariantly on $V_1$ and $V_2$. Then 
 $M\times_{\mu}V$ is isomorphic to $M\times_\mu V_1 \oplus M\times_\mu V_2$.
\end{lemma}
\begin{proof}
Since  $V_1$ and $V_2$ are invariant under the action of $\mu$, the transition functions of $M\times_{\mu}V$ are of the form 
$
\left( \begin{array}{cc}
g_{\alpha\beta}& 0\\
0& h_{\gamma\delta}
\end{array}\right)
$
which implies that $M\times_\mu V$ is isomorphic to $M\times _\mu V_1\oplus M\times_\mu V_2$.
\end{proof}

\section{$\Omega$-fiber bundles over compact quotients of BSDs}

\subsection{Hyperconvexity}
Let $\Gamma\subset \text{Aut}(\Omega)$ be a cocompact discrete subgroup of  $\text{Aut}(\Omega)$.
Then $\Gamma\setminus\Omega$ be a compact K\"ahler manifold with respect to 
the Bergman metric on $\Omega$.  Consider the diagonal action of $\Gamma$ 
on $\Omega\times\Omega$ defined by 
\begin{equation}\label{diagonal action}
\gamma(z,w) = (\gamma z, \gamma w).
\end{equation}
We denote the quotient manifold of $\Omega\times\Omega$ with respect to the action 
\eqref{diagonal action} by  $\Omega \times \Omega /\Gamma$. One can notice that 
$\Omega\times\Omega/\Gamma$ is an $\Omega$-fiber bundle over $\Gamma\setminus\Omega$.
For a generic norm $N_\Omega$ given in Section \ref{psh}, 
define \begin{equation}\nonumber
\delta(z,w) :=\frac{N_\Omega(z,z)N_\Omega(w,w)}{|N_\Omega(z,w)|^2}.
\end{equation}

\begin{theorem}\label{hyperconvex}
Let $\Omega$ be an irreducible bounded symmetric domain.
Then $\Omega\times \Omega/\Gamma$ is hyperconvex.
More precisely, $-\delta^{1/ r}$ with $\frac{1}{r}\leq \frac{1}{2\text{rank}(\Omega)}$ is a bounded psh exhaustion function.
\end{theorem}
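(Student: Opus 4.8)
The plan is to exhibit $-\delta^{1/r}$ itself as the required bounded psh exhaustion, the only substantial issue being plurisubharmonicity. First I would record the elementary points. By Lemma \ref{Bergman kernel of BSD} the Bergman kernel and the generic norm are related by $K_\Omega(z,z)=c_1N_\Omega(z,\bar z)^{-c_2}$, whence $\delta=\psi_\Omega^{-1/c_2}$; in particular $\delta$ is invariant under the diagonal $\text{Aut}(\Omega)$-action and so descends to $\Omega\times\Omega/\Gamma$. Since $\delta(z,w)\in(0,1]$ with $\delta\equiv 1$ exactly on the diagonal, $-\delta^{1/r}$ takes values in $[-1,0)$ and is bounded. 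For the exhaustion property I would use invariance to normalize $z=0$, where $\delta(0,w)=N_\Omega(w,\bar w)\to 0$ as $w\to\partial\Omega$; because $\Omega\times\Omega/\Gamma$ fibers over the compact base $\Gamma\setminus\Omega$ with fiber $\Omega$, leaving any compact set forces the fiber coordinate toward $\partial\Omega$, so $\delta\to0$ and every sublevel set $\{-\delta^{1/r}<c\}$ with $c<0$ is relatively compact.

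For plurisubharmonicity, set $g=\log\delta=-\tfrac1{c_2}\log\psi_\Omega$, so $-g$ is psh by Lemma \ref{lemma_psh}. From
\begin{equation}\nonumber
\partial\bar\partial\!\left(-e^{g/r}\right)=-\tfrac1r\,e^{g/r}\!\left(\partial\bar\partial g+\tfrac1r\,\partial g\wedge\bar\partial g\right)
\end{equation}
one sees that $-\delta^{1/r}=-e^{g/r}$ is psh if and only if, as Hermitian forms,
\begin{equation}\label{e:key}
-\partial\bar\partial g\ \geq\ \tfrac1r\,\partial g\wedge\bar\partial g .
\end{equation}
The left-hand side is already positive semi-definite; the whole point is that it dominates the rank-one correction on the right once $r\geq 2\,\text{rank}(\Omega)$.

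To verify \eqref{e:key} I would localize exactly as in the proof of Lemma \ref{lemma_psh}, reducing by invariance to the point $(0,w_0)$ with $w_0=\text{diag}(w_{11},\dots,w_{pp})$ in a maximal polydisc. The key structural observation is that there the gradient of $g$ is carried only by the diagonal polydisc directions: a direct computation gives $\partial_{z_{ij}}g=\delta_{ij}\bar w_{ii}$ and $\partial_{w_{ij}}g=-\delta_{ij}\bar w_{ii}/(1-|w_{ii}|^2)$, all transverse components vanishing, while $-\partial\bar\partial g$ decouples into the $\text{rank}(\Omega)$ diagonal $2\times2$ blocks (in the variables $z_{ii},w_{ii}$) together with a transverse part that is strictly positive by the estimate \eqref{plurisubharmonicity} and therefore only helps. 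On each diagonal block one is reduced to the unit disc, where a one-variable computation yields the rank-one bound $-\partial\bar\partial g_i\geq\tfrac12\,\partial g_i\wedge\bar\partial g_i$. Writing $H_i=-\partial\bar\partial g_i$ and $v_i$ for the $i$-th block of the gradient, a test vector $x=(x_i)$ then satisfies $\sum_i x_i^\ast H_ix_i\geq\tfrac12\sum_i|v_i^\ast x_i|^2$, and Cauchy–Schwarz over the $\text{rank}(\Omega)$ blocks gives $\sum_i|v_i^\ast x_i|^2\geq\tfrac1{\text{rank}(\Omega)}\bigl|\sum_i v_i^\ast x_i\bigr|^2$. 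Combining the two, $-\partial\bar\partial g\geq\tfrac1{2\,\text{rank}(\Omega)}\,\partial g\wedge\bar\partial g$, which is \eqref{e:key} precisely for $1/r\leq 1/(2\,\text{rank}(\Omega))$.

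The main obstacle is the type-by-type bookkeeping required to make the dichotomy ``gradient supported on the polydisc, transverse Hessian positive'' precise for all six families, reusing the block computations of Section \ref{psh}. The constant $2\,\text{rank}(\Omega)$ is forced by the per-disc factor $2$ together with the Cauchy–Schwarz loss of $\text{rank}(\Omega)$ incurred when summing over the diagonal blocks, so the threshold is sharp within this method. I expect the exceptional types V and VI (where the polydisc sits less transparently inside the domain) to demand the same passage to a totally geodesic $\Delta^{\mathrm{rank}}$ already employed in Lemma \ref{lemma_psh}, after which the identical disc-plus-Cauchy–Schwarz estimate applies.
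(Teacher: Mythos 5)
Your proposal is correct and follows essentially the same route as the paper: reduce by invariance to the point $(0,w_0)$ with $w_0$ diagonal in a maximal polydisc, observe that $\partial\log\delta$ is supported on the diagonal (polydisc) directions while the complex Hessian splits into $2\times2$ blocks that are nonnegative off the diagonal, and then absorb the rank-one gradient term into the $\mathrm{rank}(\Omega)$ diagonal blocks by Cauchy--Schwarz, which is exactly where the constant $2\,\mathrm{rank}(\Omega)$ arises in the paper's proof as well. Your packaging of the diagonal-block step as a per-disc inequality $-\partial\bar\partial g_i\geq\tfrac12\,\partial g_i\wedge\bar\partial g_i$ followed by Cauchy--Schwarz over the blocks is just a mild reorganization of the paper's ``Cauchy--Schwarz over $2p$ terms, then complete squares'' computation, and both yield the same threshold.
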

\begin{proof}
Since $\delta$ is an invariant function on $\Omega\times\Omega$ under the 
diagonal action of $\text{Aut}(\Omega)$, $\delta$ is a well defined function 
on $\Omega\times\Omega /\Gamma$.
Since the proofs are similar, we will only show 
when $\Omega$ is type I or type IV domain.

{\bf Type I, $\Omega_{p,q}^I$:}
At $(0,w_0)$ with $w_0=\text{diag}(w_{11},\ldots, w_{pp})$ since we have 
\begin{equation}\nonumber
\begin{aligned}
\frac{\partial}{\partial w_{ij}} \log \delta(z,w) 
&= \frac{\partial}{\partial w_{ij}} \log
\frac{\det(I-z\overline z^t)\det(I-w\overline w^t)}
{\det(I-z\overline w^t)\det(I-w\overline z^t)}\\
&=\frac{1}{ \det(I- w\overline w^t)}\frac{\partial}{\partial w_{ij}} \det(I- w\overline w^t)
= \frac{-\overline w_{ii}\delta_{ij}}{1-|w_{ii}|^2}
\end{aligned}
\end{equation}
and 
\begin{equation}\nonumber
\begin{aligned}
\frac{\partial}{\partial z_{ij}} \log \delta(z,w) 
= \frac{\partial}{\partial z_{ij}} \log
\frac{\det(I-z\overline z^t)\det(I-w\overline w^t)}
{\det(I-z\overline w^t)\det(I-w\overline z^t)}
=-\frac{\partial}{\partial z_{ij}} \det(I- z\overline w^t)
= \overline w_{ij},
\end{aligned}
\end{equation}
one obtains
\begin{equation}\nonumber
\begin{aligned}
&\frac{\partial \log \delta}{\partial z_{ij}}
\frac{\partial \log \delta}{\partial \overline z_{kl}}
= \overline w_{ij} w_{kl}, \quad
\frac{\partial \log \delta}{\partial z_{ij}}
\frac{\partial \log \delta}{\partial \overline w_{kl}}
= \frac{-\overline w_{ii} w_{kk}\delta_{ij}\delta_{kl}}{1-|w_{kk}|^2},\quad\\
&\frac{\partial \log \delta}{\partial w_{ij}}
\frac{\partial \log \delta}{\partial \overline w_{kl}}
= \frac{\overline w_{ii} w_{kk}\delta_{ij}\delta_{kl}}{(1-|w_{ii}|^2)(1-|w_{kk}|^2)}.
\end{aligned}
\end{equation}
Therefore for $Z=\sum X_{ij}\frac{\partial}{\partial z_{ij}} + \sum Y_{ij} \frac{\partial}{\partial w_{ij}}$, by \eqref{ddlogpsi} and \eqref{plurisubharmonicity} we have 
\begin{equation}\nonumber
\begin{aligned}
r\frac{\partial\overline \partial (-\delta^{1/r})(Z,\overline Z)}{\delta^{1/r}} 
=& -\partial\overline\partial\log \delta(Z,\overline Z)
- \frac{1}{r}\partial \log \delta (Z)\wedge \overline\partial \log\delta(\overline Z)\\
=&\sum_{i,j=1}^p \left(|X_{ij}|^2 - 2\text{Re}(X_{ij}\overline Y_{ij}) + \frac{|Y_{ij}|^2}{(1-|w_{ii}|^2)(1-|w_{jj}|^2)} \right)\\
&-\frac{1}{r}
\left| \sum_{i=1}^p\overline w_{ii} X_{ii}-\sum_{i=1}^p\frac{\overline w_{ii} Y_{ii}}{1-|w_{ii}|^2}\right|^2  
+\sum_{i=1}^p\sum_{k=p+1}^q 
\left( \frac{|Y_{ik}|^2}{1-|w_{ii}|^2} +|X_{ij}|^2 -2\text{Re}(X_{ij}\overline Y_{ij})\right).
\end{aligned}
\end{equation}
Since
\begin{equation}\nonumber
\begin{aligned}
&\sum_{i=1}^p \left(|X_{ii}|^2 - 2\text{Re}(X_{ii}\overline Y_{ii}) + \frac{|Y_{ii}|^2}{(1-|w_{ii}|^2)^2} \right)
-\frac{1}{r}
\left| \sum_{i=1}^p\left(\overline w_{ii} X_{ii}-\frac{\overline w_{ii} Y_{ii}}{1-|w_{ii}|^2}\right)\right|^2  \\
&\geq \sum_{i=1}^p \left(|X_{ii}|^2 - 2\text{Re}(X_{ii}\overline Y_{ii}) + \frac{|Y_{ii}|^2}{(1-|w_{ii}|^2)^2} \right)
-\frac{2p}{r}\sum_{i=1}^p\left(\left|\overline w_{ii} X_{ii}\right|^2 + \left|\frac{\overline w_{ii} Y_{ii}}{1-|w_{ii}|^2}\right|^2\right)\\
&\geq \sum_{i=1}^p \left| \sqrt{1-|w_{ii}|^2}X_{ii} - \frac{Y_{ii}}{\sqrt{1-|w_{ii}|^2}}\right|^2 \geq 0,
\end{aligned}
\end{equation}
we obtain the proposition for $\Omega_{p,q}^I$.

{\bf Type IV:}
At $(0,w_0)$ with $w_0=(w_1, w_2,0,\ldots, 0)$
where $w_1 = \lambda(\zeta_1 + \zeta_2)$, 
$w_2 = i\lambda ( \zeta_1 - \zeta_2)$ as in the proof of Lemma \ref{lemma_psh} for type IV domain, since we have 
\begin{equation}\nonumber
\begin{aligned}
\frac{\partial}{\partial w_{j}} \log \delta
&=\frac{1}{1-2w\overline w^t + \left| ww^t\right|^2}\frac{\partial}{\partial w_{j}} (1-2w\overline w^t + \left| ww^t\right|^2)
= \frac{-2\overline w_j + 2w_j \overline{ww^t}}{1-2w\overline w^t + \left| ww^t\right|^2}
\end{aligned}
\end{equation}
and 
\begin{equation}\nonumber
\begin{aligned}
\frac{\partial}{\partial z_{j}} \log \delta
=-\frac{\partial}{\partial z_{j}} (1-2z\overline w^t + zz^t \overline{ww^t})
= 2\overline w_{j},
\end{aligned}
\end{equation}
one obtains for $Z=\sum X_{j}\frac{\partial}{\partial z_{j}} + \sum Y_{j} \frac{\partial}{\partial w_{j}}$, 
\begin{equation}\label{type4_hyperconvex}
\begin{aligned}
r\frac{\partial\overline \partial (-\delta^{1/r})(Z,\overline Z)}{\delta^{1/r}} 
=& -\partial\overline\partial\log \delta(Z,\overline Z)
- \frac{1}{r}\partial \log \delta (Z)\wedge \overline\partial \log\delta(\overline Z)\\
\
=&\sum_{j=1}^n 2|X_j|^2 - 4\text{Re}\sum_{j=1}^n X_j\overline Y_j 
+M_{11}|Y_1|^2 + M_{12}Y_1\overline Y_2 + M_{21} Y_2\overline Y_1 + M_{22}|Y_2|^2\\
&-\frac{1}{r}\left| \sum_{j=1}^2 \left( 2 \overline w_j X_j + \frac{-2\overline w_j + 2w_j \overline{ww^t}}{1-2w\overline w^t + \left| ww^t\right|^2}Y_j\right)\right|^2.
\end{aligned}
\end{equation}
By substituting $\zeta_j$ variable, we have 
\begin{equation}\nonumber
\begin{aligned}
&\sum_{j=1}^2 \left(  \overline w_j X_j + \frac{-\overline w_j + w_j \overline{ww^t}}{1-2w\overline w^t + \left| ww^t\right|^2}\right)\\
&= \overline\lambda\left(
\overline \zeta_1(X_1-iX_2) + \overline \zeta_2(X_1+iX_2) 
-\frac{\overline \zeta_1}{S^{IV}_n}(1-|\zeta_2|^2)(Y_1 - iY_2) 
-\frac{\overline \zeta_2}{S^{IV}_n}(1-|\zeta_1|^2) ( Y_1+iY_2)
\right)
\end{aligned}
\end{equation}
and 
\begin{equation}\nonumber
\begin{aligned}
&M_{11}|Y_1|^2 + M_{12}Y_1\overline Y_2 + M_{21} Y_2\overline Y_1 + M_{22}|Y_2|^2\\
&=\frac{1}{S^{IV}_n}\left(
(1-|\zeta_1|^2)^2 |Y_1 + iY_2|^2 + (1-|\zeta_2|^2)^2 |Y_1-iY_2|^2
\right)
\end{aligned}
\end{equation}
by the equalities 
\begin{equation}\nonumber
\begin{aligned}
&M_{11}=M_{22}=\frac{1}{S^{IV}_n} 
\left( (1-|\zeta_1|^2)^2 + (1-|\zeta_2|^2)^2\right),\\
&M_{12}=-M_{21}=\frac{i}{S^{IV}_n} 
\left( (1-|\zeta_2|^2)^2 - (1-|\zeta_1|^2)^2\right),\\
& M_{12}Y_1\overline Y_2 + M_{21} Y_2\overline Y_1
= \frac{iM_{12}}{2}\left(
|Y_1+iY_2|^2 - |Y_1-iY_2|^2\right).
\end{aligned}
\end{equation}
Therefore the equation \eqref{type4_hyperconvex} is greater than or equal to 
\begin{equation}\nonumber
\begin{aligned}
&(1-|\zeta_1|^2)|X_1 - iX_2|^2 + (1-|\zeta_2|^2)|X_1 + iX_2|^2  - 4\text{Re}X_1\overline Y_1 - 4\text{Re} X_2\overline Y_2 + \frac{|Y_1 - iY_2|^2}{1-|\zeta_1|^2} +\frac{|Y_1 + iY_2|^2}{1-|\zeta_2|^2} \\
&=\left| \sqrt{1-|\zeta_1|^2}(X_1-iX_2) - \frac{Y_1-iY_2}{\sqrt{1-|\zeta_1|^2}}\right|^2
+
\left| \sqrt{1-|\zeta_2|^2}(X_1+iX_2) - \frac{Y_1+iY_2}{\sqrt{1-|\zeta_2|^2}}\right|^2
\end{aligned}
\end{equation}
and hence $-\delta^{1/r}$ is psh.
\end{proof}

Recall the definition of the Diederich-Fornaess index:
for a domain $D$ in a complex manifold of dimension $n$, $n\geq 2$ with smooth boundary,
the Diederich-Fornaess index of $D$ is defined by 
$$ \sup\left\{ \mu\in (0,1) : -\partial\overline\partial (-\nu)^\mu >0 \text{ on } D\right\}$$
where the supremum is taken over all defining function $\nu$ of $D$.
\begin{corollary}\label{1/2}
The Diederich-Fornaess index of $\mathbb B^n\times\mathbb B^n/\Gamma$ in 
$ \mathbb{B}^n\times\mathbb{CP}^n/\Gamma$ is $1/2$.
\end{corollary}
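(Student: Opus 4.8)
Throughout set $D:=\mathbb B^n\times\mathbb B^n/\Gamma\Subset X:=\mathbb B^n\times\mathbb{CP}^n/\Gamma$; since $X$ is a holomorphic $\mathbb{CP}^n$-bundle over the compact quotient $\Gamma\setminus\mathbb B^n$ it is a compact complex manifold, and $D$ is the relatively compact subdomain where the fibre coordinate lies in $\mathbb B^n$, with real-analytic boundary. For $\Omega=\mathbb B^n$ the generic norm is $N_\Omega(z,w)=1-\langle z,w\rangle$, so $\delta=\frac{(1-|z|^2)(1-|w|^2)}{|1-\langle z,w\rangle|^2}$ descends to $D$, and because $\delta$ vanishes to first order exactly where the fibre point approaches $\partial\mathbb B^n$ one checks $d\delta\neq0$ on $\partial D$; thus $\nu:=-\delta$ is a genuine defining function of $D$ and $-(-\nu)^\mu=-\delta^\mu$. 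The plan is to prove the two bounds $DF(D)\ge\tfrac12$ and $DF(D)\le\tfrac12$ separately.

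For the lower bound I would specialize Theorem \ref{hyperconvex} to $\mathrm{rank}(\mathbb B^n)=1$, which already gives that $-\delta^\mu$ is psh for every $\mu\le\tfrac12$. I would then re-read the Type I computation in its proof with $p=1$, $q=n$: at a normalized point $(0,w_0)$ with $t:=|w_{11}|^2=1-\delta$, the Hermitian form $\tfrac{r}{\delta^{1/r}}\partial\bar\partial(-\delta^{1/r})=-\partial\bar\partial\log\delta-\tfrac1r\,\partial\log\delta\wedge\bar\partial\log\delta$ splits into a $2\times2$ ``diagonal'' block of determinant $t\bigl(2-t(1+\tfrac{2}{r})\bigr)$ and $2\times2$ ``off'' blocks of determinant $t/(1-t)$. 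For $\mu=\tfrac1r<\tfrac12$ both determinants are strictly positive whenever $t\in(0,1)$, in particular near $\partial D$ where $t\to1$, and they vanish only at $t=0$, i.e.\ along the diagonal section $z=w$. Hence $-(-\nu)^\mu$ is strictly psh on a neighbourhood of $\partial D$ for all $\mu<\tfrac12$, which yields $DF(D)\ge\tfrac12$. The unavoidable degeneracy of $\partial\bar\partial(-\delta^\mu)$ along the compact complex submanifold $Z_0=\{z=w\}/\Gamma\cong\Gamma\setminus\mathbb B^n$, forced by the maximum principle for psh functions on $Z_0$, is exactly the feature that prevents the index from reaching $1$.

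For the upper bound I would invoke Adachi--Brinkschulte \cite{Adachi_Brinkschulte_2015} and Fu--Shaw \cite{Fu_Shaw_2016}. When $n=1$ the fibre is one-dimensional, the only complex-tangential direction of $\partial D$ is horizontal and the Levi form vanishes there, so $\partial D$ is Levi-flat and their theorems give $DF(\Delta\times\Delta/\Gamma)=\tfrac12$ directly. To bring the general case back to this I would restrict to a totally geodesic complex disc $\ell\cong\Delta\subset\mathbb B^n$ and form the associated two-dimensional disc sub-bundle $C\subset X$ transverse to $\partial D$, so that $C\cap D$ is a model of the $n=1$ domain and $\nu|_C$ is one of its defining functions; here $\delta|_{\ell\times\ell}=\frac{(1-|z|^2)(1-|w|^2)}{|1-z\bar w|^2}$ is precisely the disc invariant. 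Since the restriction of a strictly psh function to a complex submanifold remains strictly psh, any defining function realizing an exponent $\mu$ on $D$ realizes $\mu$ on $C\cap D$; as the latter has index $\tfrac12$ this forces $\mu\le\tfrac12$, whence $DF(D)\le\tfrac12$, and combining with the first bound gives the value $\tfrac12$.

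The main obstacle is this upper bound. For $n\ge2$ the boundary $\partial D$ is strictly pseudoconvex in the fibre directions and only flat horizontally, so it is no longer Levi-flat and the theorems of \cite{Adachi_Brinkschulte_2015,Fu_Shaw_2016} do not apply verbatim; the delicate point is to produce a genuine $n=1$ Levi-flat sub-model over a \emph{compact} curve (equivalently, to arrange the stabilizer $\Gamma_\ell=\{\gamma\in\Gamma:\gamma\ell=\ell\}$ to be a cocompact lattice of $\mathrm{Aut}(\Delta)$, which need not exist for an arbitrary cocompact $\Gamma\subset\mathrm{Aut}(\mathbb B^n)$), or else to replace the reduction by a direct curvature obstruction coming from the negativity of the normal bundle $N_{Z_0}\cong T(\Gamma\setminus\mathbb B^n)$, and to guarantee that restriction to $C$ does not strictly increase the Diederich--Fornaess exponent.
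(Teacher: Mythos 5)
Your lower bound is essentially the paper's own argument: Theorem \ref{hyperconvex} with $\mathrm{rank}(\mathbb B^n)=1$ gives that $-\delta^{1/r}$ is psh for $1/r\le 1/2$, and your $p=1$, $q=n$ determinant computation correctly locates where strictness holds and the unavoidable degeneracy along the compact diagonal $\{z=w\}/\Gamma$. The genuine gap is the upper bound, exactly the point you flag as the ``main obstacle.'' You assume the theorems of Adachi--Brinkschulte and Fu--Shaw apply only to Levi-flat boundaries, and therefore try to reduce to the $n=1$ case by restricting to a disc sub-bundle over a totally geodesic disc $\ell\subset\mathbb B^n$. That reduction is unworkable in general, for the reason you yourself identify: it requires the stabilizer $\Gamma_\ell$ to be a cocompact lattice in $\mathrm{Aut}(\ell)$, so that $C\cap D$ is a relatively compact disc bundle over a compact curve, and a general cocompact $\Gamma\subset\mathrm{Aut}(\mathbb B^n)$ contains no such subgroup (a cocompact complex hyperbolic lattice need not admit any compact totally geodesic complex curve). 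So as written the upper bound is not proved for any $n\ge 2$.

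The missing idea is that the cited results are not confined to Levi-flat boundaries. In the form used by the paper, Adachi--Brinkschulte and Fu--Shaw prove: for a relatively compact domain with $C^3$ boundary in a complex manifold of dimension $N$, if the Levi form of the boundary has at least $k$ zero eigenvalues everywhere, $0\le k\le N-1$, then the Diederich--Fornaess index is at most $(N-k)/N$. Here $N=2n$, and the boundary of $\mathbb B^n\times\mathbb B^n/\Gamma$ lifts to $\mathbb B^n\times\partial\mathbb B^n$, which contains the $n$-dimensional complex manifolds $\mathbb B^n\times\{w_0\}$ for $w_0\in\partial\mathbb B^n$; hence the Levi form has at least $k=n$ zero eigenvalues at every boundary point --- precisely the ``flat horizontal directions'' you describe. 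This yields the upper bound $(2n-n)/(2n)=1/2$ directly, for every $n$, with no reduction to the one-dimensional case and no need for compact geodesic sub-models; combined with the lower bound from Theorem \ref{hyperconvex}, this completes the proof exactly as the paper does.
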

\begin{proof}
Since $-\delta = -\frac{(1-|z|^2)(1-|w|^2)}{|1-z\overline w|^2}$ is invariant with respect to 
the action of $\Gamma$, it gives a real analytic defining function of 
$\mathbb B^n\times\mathbb B^n/\Gamma \subset 
\mathbb B^n\times\mathbb{CP}^n/\Gamma$.
Adachi-Brinkschulte(\cite[Main Theorem]{Adachi_Brinkschulte_2015}) 
and Fu-Shaw(\cite{Fu_Shaw_2016}) proved independently that 
for a relatively compact domain with $C^3$ boundary in a complex manifold of dimension $N$, 
if the Levi form of the domain has at least 
$k$ zero eigenvalues everywhere on the boundary with $0\leq k \leq N-1$, then the 
Diederich-Fornaess index should be less or equal to $\frac{N-k}{N}$.
Since the Levi form of $\mathbb B^n\times \mathbb B^n/\Gamma$ 
has at least $n$ number of zero eigenvalues, the Diederich-Fornaess index
should be less than or equal to 1/2.
On the other hand by Theorem \ref{hyperconvex} the index should be greater or equal to 1/2 and 
hence we complete the proof.

\end{proof}

\subsection{$k$-twisted BSDs}
Let us consider the diagonal action of $\rho\colon \Gamma \rightarrow \text{Aut}(\Omega)$
on $\Omega^{k} :=\Omega\times\dots\times\Omega$ given by
$\gamma (z_1,\ldots, z_k) = (\gamma z_1, \dots, \gamma z_k)$.
Let $\Omega^k/\Gamma$ be the quotient of $\Omega^k$ by this diagonal action.
Then it is a holomorphic $\Omega^{k-1}$-fiber bundle over $\Omega/\Gamma$.
Define a function $\psi_k$ on $\Omega^k$ by 
\begin{equation}\nonumber
\psi_k (z) := \left | \frac{\prod^k_{j=1} K_\Omega(z_j, z_j)}{K_\Omega(z_1,z_{2})K_\Omega(z_2,z_3)\cdots K_\Omega(z_{k-1},z_k)K_\Omega(z_k,z_1)}\right |^2.
\end{equation}
Then $\psi_2 = \psi_\Omega^2$, where $\psi_\Omega$ is given in \eqref{psi},
and one has 
\begin{equation}\nonumber
\psi_k(z_1,\ldots z_k )= \psi_\Omega(z_1,z_2)\,\cdots\,\psi_\Omega(z_{k-1},z_k)\,\psi_\Omega(z_k,z_1).\end{equation}
Since $$\partial \overline\partial \log\psi_k(z_1,\ldots,z_k) 
= \sum_{j=1}^{k-1} \partial\overline\partial \log \psi_\Omega(z_j,z_{j+1})  + \partial\overline\partial \log \psi_\Omega(z_k, z_1)\geq 0,$$
we have the following: 
\begin{corollary}
For any irreducible bounded symmetric domain  $\Omega$ and $k\geq 2$, $\Omega^k/\Gamma$ is hyperconvex.
\end{corollary}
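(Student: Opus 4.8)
The plan is to manufacture a \emph{bounded} plurisubharmonic exhaustion on $\Omega^k/\Gamma$ by combining, through a maximum, the $k$ pairwise bounded exhaustions already produced in Theorem \ref{hyperconvex}. Recall from that theorem, applied to the two-variable function $\delta(a,b)=N_\Omega(a,a)N_\Omega(b,b)/|N_\Omega(a,b)|^2$, that $-\delta^{1/r}$ is plurisubharmonic on $\Omega\times\Omega$ whenever $1/r\le 1/(2\,\text{rank}\,\Omega)$; moreover $\delta=\psi_\Omega^{-1/c_2}\in(0,1]$ by Lemma \ref{Bergman kernel of BSD} and the fact that $\psi_\Omega>1$ off the diagonal. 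Fixing such an $r$ and reading indices cyclically ($z_{k+1}:=z_1$), I would set
\[
\nu_j(z_1,\dots,z_k):=-\,\delta(z_j,z_{j+1})^{1/r},\qquad j=1,\dots,k,
\]
and take $\mu:=\max_{1\le j\le k}\nu_j$. Each $\nu_j$ depends only on $(z_j,z_{j+1})$ and is plurisubharmonic there, hence plurisubharmonic on $\Omega^k$, so $\mu$ is continuous and plurisubharmonic. Since $\delta$ is invariant under the diagonal action of $\text{Aut}(\Omega)$, every $\nu_j$—and therefore $\mu$—is $\Gamma$-invariant and descends to $\Omega^k/\Gamma$, where $\delta\in(0,1]$ gives $\mu\colon\Omega^k/\Gamma\to[-1,0)$; thus $\mu$ is bounded.

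The substantive point will be the exhaustion property: that $\{\mu<c\}$ is relatively compact for every $c<0$. The elementary fact I would use is that $\delta(a,b)\to 0$ whenever $d_\Omega(a,b)\to\infty$; normalizing $a=0$ by an automorphism gives $\delta(a,b)=N_\Omega(gb,gb)$ with $d_\Omega(0,gb)=d_\Omega(a,b)$, and since $\{\,N_\Omega(w,w)\ge\varepsilon\,\}$ is a compact subset of $\Omega$, large Bergman distance forces $N_\Omega(gb,gb)$, hence $\delta$, to be small, uniformly. Now suppose a sequence in $\Omega^k/\Gamma$ leaves every compact set. Choosing representatives so that the base coordinate $z_1$ stays in a compact fundamental domain of $\Gamma$, compactness of $\Gamma\setminus\Omega$ forces a fibre coordinate to escape, i.e. $d_\Omega(z_1,z_{j_0})\to\infty$ for some $j_0$. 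By the triangle inequality $d_\Omega(z_1,z_{j_0})\le\sum_{i=1}^{j_0-1}d_\Omega(z_i,z_{i+1})$, so some consecutive distance $d_\Omega(z_{i_0},z_{i_0+1})\to\infty$ along a subsequence, whence $\nu_{i_0}\to 0$ and $\mu\ge\nu_{i_0}\to 0$. Thus no escaping sequence can satisfy $\mu<c<0$, so $\{\mu<c\}$ is relatively compact and $\mu$ is the desired bounded plurisubharmonic exhaustion.

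The main obstacle is exactly this exhaustion step, and it dictates the use of the maximum. The naive sum $\sum_j\nu_j$ fails: a point may run to the fibre boundary in a single factor while the other factors stay put, so that only some summands tend to $0$ and the sum stays bounded away from $0$, producing an escaping sequence that remains in a sublevel set. The faithful analogue of Theorem \ref{hyperconvex}, namely $-\delta_k^{1/r}$ with $\delta_k=\prod_{j=1}^{k}\delta(z_j,z_{j+1})$, does have the right exhaustion behaviour, since the product tends to $0$ as soon as one factor does; but establishing its plurisubharmonicity directly would require controlling the cross terms in $\partial\log\delta_k\wedge\overline\partial\log\delta_k$ arising from the variable $z_j$ shared by $\delta(z_{j-1},z_j)$ and $\delta(z_j,z_{j+1})$. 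The maximum sidesteps this computation entirely, relying only on the plurisubharmonicity of each single-pair exhaustion already proved in Theorem \ref{hyperconvex}.
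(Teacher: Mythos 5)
Your proof is correct, and it takes a genuinely different route from the paper's. The paper exploits the cyclic product structure: it sets $\psi_k(z)=\psi_\Omega(z_1,z_2)\cdots\psi_\Omega(z_{k-1},z_k)\,\psi_\Omega(z_k,z_1)$, so that $\partial\bar\partial\log\psi_k$ is a sum of pullbacks of $\partial\bar\partial\log\psi_\Omega\ge 0$ under the holomorphic projections $(z_1,\dots,z_k)\mapsto(z_j,z_{j+1})$, and hyperconvexity is then read off from the corresponding cyclic product $\delta_k=\prod_j\delta(z_j,z_{j+1})$, i.e.\ precisely the ``faithful analogue'' $-\delta_k^{1/r}$ that you chose to avoid. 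You are right that this route must control the cross terms in $\partial\log\delta_k\wedge\bar\partial\log\delta_k$, a point the paper leaves implicit; it can be settled by Cauchy--Schwarz, $\bigl|\sum_j a_j\bigr|^2\le k\sum_j |a_j|^2$, since each pair separately satisfies the estimate proved in Theorem \ref{hyperconvex}, at the cost of shrinking the exponent to $1/r\le 1/(2k\,\mathrm{rank}(\Omega))$. Your maximum $\mu=\max_j\bigl(-\delta(z_j,z_{j+1})^{1/r}\bigr)$ sidesteps every Hessian computation: each $\nu_j$ is a pullback of the psh function $-\delta^{1/r}$ under a holomorphic map, a finite maximum of psh functions is psh, and $\Gamma$-invariance and boundedness are immediate. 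Your verification of the exhaustion property (properness of the complete Bergman metric, the triangle inequality along consecutive pairs, and the uniform decay of $\delta(a,b)$ as $d_\Omega(a,b)\to\infty$, obtained from invariance plus compactness of $\{w\in\Omega: N_\Omega(w,w)\ge\varepsilon\}$) is sound, and it supplies exactly the kind of argument the paper does not spell out even for its own candidate function; your observation that the plain sum $\sum_j\nu_j$ fails for $k\ge 3$ (take $z_2=\dots=z_k\to\partial\Omega$ with $z_1$ fixed, so that $k-2$ of the summands stay at their maximum) is also correct and is what legitimately forces the maximum. The trade-off is mild: the paper's function is real analytic, while your $\mu$ is merely continuous; this suffices for hyperconvexity as the paper defines it (a bounded psh exhaustion, with no smoothness requirement), and could be upgraded by a regularized maximum if smoothness were wanted, whereas your construction keeps the exponent $1/(2\,\mathrm{rank}(\Omega))$ of Theorem \ref{hyperconvex} unchanged and requires no new computation.
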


\subsection{Steinness}
Let $\Gamma\subset G$ be a cocompact discrete subgroup of  $\text{Aut}(\Omega)$.
Then $\Gamma\setminus\Omega$ is a compact K\"ahler manifold with respect to the metric induced from
the Bergman metric on $\Omega$.  Now consider the diagonal action of $\Gamma$ 
on $\Omega\times\Omega$ defined by
\begin{equation}\label{twisted action}
\gamma(z,w) = \left(\gamma z, \overline{\gamma \overline w}\right).
\end{equation}
Denote by $\Omega\times\Omega/ \overline \Gamma$ 
the quotient manifold of $\Omega\times\Omega$ by the action \eqref{twisted action}.
Then $\Omega \times \Omega /\overline\Gamma$ is an $\Omega$-fiber bundle 
over $\Gamma\setminus\Omega$.
Now consider the function on $\Omega\times\Omega$ defined by
\begin{equation}\nonumber
\overline \psi_\Omega(z,w):= \psi_\Omega(z,\overline w)
=\frac{K_\Omega(z,z)K_\Omega (w,w)}{|K_\Omega(z,\overline w)|^2}.
\end{equation}
Since 
\begin{equation}\nonumber
\begin{aligned}
\overline\psi_\Omega \left(\gamma(z,w)\right) 
= \overline\psi_\Omega (\gamma z, \overline{\gamma \overline w})
&= \frac{K_\Omega\left(\gamma z,\gamma z\right)K_\Omega 
\left( \overline{\gamma \overline w}, \overline{\gamma \overline w}\right)}
{\left|K_\Omega\left(\gamma z,\overline{\overline{\gamma \overline w}}\right)\right|^2}\\
&= \frac{K_\Omega\left(\gamma z,\gamma z\right)K_\Omega 
\left( {\gamma \overline w}, {\gamma \overline w}\right)}
{\left|K_\Omega\left(\gamma z,\gamma \overline w\right)\right|^2}
=\overline\psi_\Omega (z,w),
\end{aligned}
\end{equation}
$\overline \psi_\Omega$ induces a function on $\Omega\times\Omega/\overline\Gamma$.
Hence $\overline \delta(z,w) := \frac{N_\Omega(z,z)N_\Omega (w,w)}
{\left|N_\Omega(z,\overline w)\right|^2}$ also induces a function on $\Omega\times\Omega/\overline\Gamma$ 
and it is an exhaustion function.
\begin{theorem}\label{steinness}
Let $\Omega$ be an irreducible bounded symmetric domain.
Then $\Omega\times \Omega/\overline\Gamma$ admits a bounded strictly psh exhaustion function.
More precisely, $-\overline\delta^{1/ r}$ with $\frac{1}{r}\leq \frac{1}{2\text{rank}(\Omega)}$ is a bounded strictly psh exhaustion function.
\end{theorem}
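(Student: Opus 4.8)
The plan is to mimic the proof of Theorem \ref{hyperconvex}, the essential new input being that the conjugation in the twisted action \eqref{twisted action} makes the denominator of $\overline\delta$ pluriharmonic, which upgrades the mere semidefiniteness obtained there to strict positivity here. First I would record the identity $\overline\delta(z,w)=\delta(z,\overline w)$, using $N_\Omega(\overline w,\overline w)=\overline{N_\Omega(w,w)}=N_\Omega(w,w)$. Since every bounded symmetric domain in its Harish-Chandra realization satisfies $\overline\Omega=\Omega$, the point $(z,\overline w)$ lies in $\Omega\times\Omega$ whenever $(z,w)$ does, and by Lemma \ref{Bergman kernel of BSD} the generic norm $N_\Omega$ is zero-free on $\Omega\times\Omega$; hence $N_\Omega(z,\overline w)\neq 0$ on $\Omega\times\Omega$. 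Because $w\mapsto\overline w$ is anti-holomorphic and $N_\Omega$ is anti-holomorphic in its second slot, $N_\Omega(z,\overline w)$ is \emph{jointly holomorphic} and non-vanishing in $(z,w)$, so
$$\log\bigl|N_\Omega(z,\overline w)\bigr|^2=\log N_\Omega(z,\overline w)+\overline{\log N_\Omega(z,\overline w)}$$
is pluriharmonic on $\Omega\times\Omega$. This is the key structural difference from Theorem \ref{hyperconvex}, where the denominator $|N_\Omega(z,w)|^2$ genuinely couples $z$ and $w$.

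Consequently $\partial\overline\partial\log\overline\delta=\partial\overline\partial\log N_\Omega(z,z)+\partial\overline\partial\log N_\Omega(w,w)$, with no mixed $z$--$w$ block. By Lemma \ref{Bergman kernel of BSD}, $\log K_\Omega(z,z)$ is a constant minus $c_2\log N_\Omega(z,z)$, so $\partial\overline\partial\log N_\Omega(z,z)=-\tfrac{1}{c_2}\,\partial\overline\partial\log K_\Omega(z,z)$ is negative definite, being a negative multiple of the Bergman metric; likewise in $w$. Hence $-\partial\overline\partial\log\overline\delta$ is block-diagonal and strictly positive definite everywhere. Exactly as in the proof of Theorem \ref{hyperconvex} one has
$$ r\,\frac{\partial\overline\partial(-\overline\delta^{1/r})}{\overline\delta^{1/r}}=-\partial\overline\partial\log\overline\delta-\frac{1}{r}\,\partial\log\overline\delta\wedge\overline\partial\log\overline\delta, $$
where now the first summand is strictly positive definite and the second is a negative rank-one form carrying all the $z$--$w$ interaction, since the holomorphic gradient of the pluriharmonic term survives in $\partial\log\overline\delta$.

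To extract the explicit threshold I would reduce to a point $(0,w_0)$ with $w_0$ in a maximal polydisc: $\overline\delta$ is invariant under the twisted action, whose fibre component is the $\text{Aut}(\Omega)$-action through the conjugate representation $\gamma\mapsto\overline{\gamma\,\overline{(\,\cdot\,)}}$, so this normalization is legitimate just as in Theorem \ref{hyperconvex}. At such a point the rank-one term is estimated against the block-diagonal positive part by the same completion-of-squares used after \eqref{plurisubharmonicity} (type I) and \eqref{ddbar type4} (type IV), the only change being that the diagonal blocks are now the full Bergman metric rather than the degenerate form appearing there. The bound $\tfrac{1}{r}\le\tfrac{1}{2\,\text{rank}(\Omega)}$ then makes the sum strictly positive, so $-\overline\delta^{1/r}$ is strictly psh. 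Boundedness and the exhaustion property follow as before: since $\Gamma$ is cocompact the base variable ranges over a compact fundamental domain $F\Subset\Omega$, so $|N_\Omega(z,\overline w)|$ is bounded below while $N_\Omega(w,w)\to 0$ as the fibre variable tends to $\partial\Omega$, forcing $\overline\delta\to 0$ at the ends.

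The main obstacle I anticipate is twofold. First, one must verify the non-vanishing of $N_\Omega(z,\overline w)$, and hence the pluriharmonicity of the denominator, uniformly across all six types; this is exactly where $\overline\Omega=\Omega$ and Lemma \ref{Bergman kernel of BSD} are used. Second, one must check that the completion-of-squares argument yields \emph{strict} positivity with the stated threshold rather than mere semidefiniteness. Here the gain comes precisely from the first summand being positive definite instead of positive semidefinite, so the same algebra that closed the inequality in Theorem \ref{hyperconvex} now closes it strictly, which is what promotes hyperconvexity to the existence of a bounded strictly psh exhaustion.
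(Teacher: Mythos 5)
Your proposal is correct and follows essentially the same route as the paper: the paper's proof likewise rests on the observation that $N_\Omega(z,\overline w)$ is jointly holomorphic (hence $\log|N_\Omega(z,\overline w)|^2$ is pluriharmonic), which makes $-\partial\overline\partial\log\overline\delta$ block-diagonal and positive definite, and then controls the rank-one term $\frac{1}{r}\,\partial\log\overline\delta\wedge\overline\partial\log\overline\delta$ at a normalized point $(0,w_0)$ in a maximal polydisc by the same Cauchy--Schwarz/completion-of-squares estimate with the threshold $\frac{1}{r}\le\frac{1}{2\,\mathrm{rank}(\Omega)}$. The only cosmetic difference is that you phrase the block-diagonality intrinsically via the Bergman metric while the paper verifies it by direct computation for type I and asserts the other types are analogous.
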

\begin{proof}
{\bf Type I, $\Omega_{p,q}^I$:}
Since $N_\Omega(z,\overline w)$ is holomorphic in the $z_i$'s and $w_i$'s, we have 
\begin{equation}\nonumber
-\partial\overline\partial \log\overline \delta
= \left( \begin{array}{cc}
I_{pq}& 0 \\
0& -\partial_w\bar\partial_w\log \det(I-w\overline w^t)
\end{array}\right)
\end{equation}
at $(0,w_0)$ with $w_0=\text{diag}(w_{11},\ldots, w_{pp})$, and hence for a nonzero vector $Z= \sum X_{jk}\frac{\partial}{\partial z_{jk}} + \sum Y_{jk}\frac{\partial}{\partial w_{jk}}$ at $(0,w_0)$ we obtain
 \begin{equation}\nonumber
\begin{aligned}
r\frac{\partial\overline \partial (-\bar\delta^{1/r})}{\bar\delta^{1/r}} 
&= -\partial\overline\partial\log \bar \delta
- \frac{1}{r}\partial \log \bar\delta \wedge \overline\partial \log\overline\delta\\
&=\sum_{j,k=1}^p \left(|X_{jk}|^2 + \frac{|Y_{jk}|^2}{(1-|w_{kk}|^2)(1-|w_{jj}|^2)} \right)\\
&\quad-\frac{1}{r}
\left| \sum_{j=1}^p w_{jj} X_{jj}-\sum_{j=1}^p\frac{\overline w_{jj} Y_{jj}}{1-|w_{jj}|^2}\right|^2  
+\sum_{j=1}^p\sum_{k=p+1}^q 
\left( \frac{|Y_{jk}|^2}{1-|w_{jj}|^2} +|X_{jk}|^2 \right)\\
&\geq \sum_{j, k=1, j\neq k}^p \left(|X_{jk}|^2 + \frac{|Y_{jk}|^2}{(1-|w_{kk}|^2)(1-|w_{jj}|^2)} \right) \\
&\quad +\sum_{j=1}^p\left( (1-|w_{jj}|^2)|X_{jj}|^2 + \frac{|Y_{jj}|^2}{1-|w_{jj}|^2}\right)
+ \sum_{j=1}^p\sum_{k=p+1}^q 
\left( \frac{|Y_{jk}|^2}{1-|w_{jj}|^2} +|X_{jk}|^2 \right) 
>0.
\end{aligned}
\end{equation}
Therefore $-\overline \delta^{1/r}$ is strictly psh exhaustion function.

{\bf Other cases:} We omit the proof since we can apply a same way for type I domains.
\end{proof}

By a proof similar to that of Corollary \ref{1/2}, we obtain the following:
\begin{corollary}
The Diederich-Fornaess index of $\mathbb B^n\times\mathbb B^n/\overline\Gamma$ in 
$ \mathbb{B}^n\times\mathbb{CP}^n/\overline \Gamma$ is $1/2$.
\end{corollary}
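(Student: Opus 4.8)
The plan is to run the argument of Corollary~\ref{1/2} with $\overline\delta$ and Theorem~\ref{steinness} replacing $\delta$ and Theorem~\ref{hyperconvex}. For $\Omega=\mathbb B^n$ one has $N_\Omega(z,\overline w)=1-z\cdot w$ with $z\cdot w=\sum_{i=1}^n z_iw_i$, which is holomorphic in both variables, so
\[
-\overline\delta(z,w)=-\frac{(1-|z|^2)(1-|w|^2)}{|1-z\cdot w|^2}.
\]
First I would verify that this is a real-analytic defining function of $\mathbb B^n\times\mathbb B^n/\overline\Gamma$ inside $\mathbb B^n\times\mathbb{CP}^n/\overline\Gamma$. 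It is $\overline\Gamma$-invariant by the computation preceding Theorem~\ref{steinness}; it is negative on the domain; and because $|1-z\cdot w|\ge 1-|z|>0$ whenever $z\in\mathbb B^n$ and $|w|\to 1$, one has $\overline\delta\to 0$ as $w\to\partial\mathbb B^n$. Hence $\{-\overline\delta=0\}$ is precisely the boundary, on a neighborhood of which $\overline\delta$ is real analytic with nonvanishing differential.

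For the upper bound I would apply the theorem of Adachi--Brinkschulte and Fu--Shaw exactly as in Corollary~\ref{1/2}. Since the twisted action $\gamma(z,w)=(\gamma z,\overline{\gamma\overline w})$ is a product of biholomorphisms of the two factors, near a boundary point the domain is locally biholomorphic to $\mathbb B^n_z\times\mathbb B^n_w$ with $w$ close to $\partial\mathbb B^n$, so its boundary is the product $\mathbb B^n_z\times\partial\mathbb B^n_w$. On the boundary the factor $1-|w|^2$ vanishes, hence $\partial\overline\delta/\partial z_i=0$ and $\partial^2\overline\delta/\partial z_i\partial\overline z_j=0$ there, so the $n$ complex $z$-directions lie in the null space of the Levi form while the $n-1$ CR directions of $\partial\mathbb B^n$ stay strictly positive. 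Thus the Levi form has at least $n$ zero eigenvalues everywhere on the boundary, and with ambient dimension $N=2n$ and $k=n$ the cited theorem gives a Diederich--Fornaess index $\le\tfrac{N-k}{N}=\tfrac12$.

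For the lower bound I would invoke Theorem~\ref{steinness}: since $\text{rank}(\mathbb B^n)=1$ we may take $\tfrac1r=\tfrac12$, so $-\overline\delta^{1/2}$ is strictly plurisubharmonic, i.e.\ $-\partial\overline\partial(-(-\overline\delta))^{1/2}=-\partial\overline\partial\,\overline\delta^{1/2}>0$ on the domain. Choosing $\mu=\tfrac12$ in the definition of the index then shows that the Diederich--Fornaess index is $\ge\tfrac12$, and combining the two bounds gives the asserted value $\tfrac12$.

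I expect the only point needing genuine care to be the eigenvalue count in the upper-bound step, namely checking that the holomorphic coupling term $|1-z\cdot w|^2$ does not spoil the Levi-flatness of the $z$-directions. As indicated above this reduces to the vanishing of $1-|w|^2$ on the boundary, which forces all pure $z$-derivatives of $\overline\delta$ to vanish there; the same conclusion can be read off from the block form of $-\partial\overline\partial\log\overline\delta$ computed in the proof of Theorem~\ref{steinness}, where the $z$-block and the $w$-block decouple at the base point $(0,w_0)$.
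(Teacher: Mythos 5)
Your proof is correct and follows exactly the route the paper intends: the paper simply says this corollary follows ``by a proof similar to that of Corollary \ref{1/2}'', i.e.\ take $-\overline\delta$ as a real-analytic defining function, get the upper bound $\le 1/2$ from the Adachi--Brinkschulte/Fu--Shaw theorem via the $n$ zero eigenvalues of the Levi form, and get the lower bound $\ge 1/2$ from Theorem \ref{steinness} with $1/r = 1/2$ since $\mathrm{rank}(\mathbb B^n)=1$. Your write-up in fact supplies details (the defining-function verification and the explicit eigenvalue count coming from the vanishing of $1-|w|^2$ on the boundary) that the paper leaves implicit, but the approach is the same.
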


\begin{remark}
In case $\Omega$ is the unit disc in $\mathbb C$, Theorem \ref{steinness} was 
proved by Adachi in \cite{Adachi_2017}.
\end{remark}

\end{document}